\newcommand{\pl}{\mathrm{PLF}}
\newcommand{\zz}{\Z[\frac 12]}
\newcommand{\N}{{\mathbb N}}
\newcommand{\la}{\langle}
\newcommand{\ra}{\rangle}
\newtheorem{theorem}{Theorem}[section]
\newtheorem{lemma}[theorem]{Lemma}
\newtheorem{cy}[theorem]{Corollary}
\theoremstyle{definition}
\newtheorem{df}[theorem]{Definition}
\newtheorem{rk}[theorem]{Remark}
\newtheorem{prob}[theorem]{Problem}
\newcommand{\Z}{\mathbb Z}
\newcommand{\topp}{\mathbf{top}}
\newcommand{\bott}{\mathbf{bot}}
\newcommand{\iv}{^{-1}}
\newcommand{\rr}{{\mathcal R} }
\newcommand{\pp}{{\mathcal P} }
\begin{document}
\renewcommand{\theequation}{\thesection.\arabic{equation}}
\bigskip

\title{On closed subgroups of the R. Thompson group $F$}
\author{G. Golan-Polak, M. Sapir\thanks{The first author was partially supported by ISF grant 2322/19. The second author was partially supported by the NSF grant DMS-1901976.}}

\date{}
\maketitle

\begin{abstract}
	We prove that Thompson's group $F$ has a subgroup $H$ such that the conjugacy problem in $H$ is undecidable and the membership problem in $H$ is easily decidable. The subgroup $H$ of $F$ is a \textit{closed} subgroup of $F$.
	 That is, every function in $F$ which is a piecewise-$H$ function belongs to $H$. Other interesting examples of closed subgroups of $F$ include Jones' subgroups $\overrightarrow{F}_n$ and Jones' $3$-colorable subgroup $\mathcal F$. By a recent result of the first author, all maximal subgroups of $F$ of infinite index are closed. In this paper we prove that if $K\leq F$ is finitely generated then the closure of $K$, i.e., the smallest closed subgroup of $F$ which contains $K$, is finitely generated. We also prove that all finitely generated closed subgroups of $F$ are undistorted in $F$. In particular, all finitely generated maximal subgroups of $F$ are undistorted in $F$.
\end{abstract}


\section{Introduction}\label{sec:int}

Recall that R. Thompson's group $F$ is the group of all piecewise-linear homeomorphisms of the interval $[0,1]$ with finitely many breakpoints, where all breakpoints are finite dyadic  (i.e., elements of the set $\mathbb Z[\frac{1}{2}]\cap(0,1)$) and all slopes are integer powers of $2$.
The group $F$ has a presentation with two generators and two defining relations \cite{CFP}. 

Algorithmic problems in $F$ have been extensively studied. It is well known that the word problem in $F$ is decidable in linear time \cite{CFP,SU,GuSa} and that the conjugacy problem is decidable in linear time \cite{GuSa,BM,BHMM}. The simultaneous conjugacy problem \cite{KM}  and twisted conjugacy problem \cite{BMV} have also been proven to be decidable. The article \cite{BBH} gives an algorithm for deciding if a finitely generated subgroup $H$ of $F$ is solvable and in \cite{G}, the first author showed that the generation problem in $F$ is decidable (i.e., there is
 an algorithm for deciding if a finite set of elements of $F$ generates $F$). 
 On the other hand, it is proved in \cite{BMV} that there are orbit undecidable subgroups of $\mathrm{Aut}(F)$ and hence, there are extensions of Thompson’s group $F$ by finitely generated free groups with unsolvable conjugacy problem.

In this paper we consider the conjugacy problem in subgroups of $F$. One of the main results in the paper is the following.

\begin{theorem}\label{mainInt}
	Thompson's group $F$ has a subgroup $H$ such that the conjugacy problem in $H$ is undecidable and the membership problem in $H$ is decidable.
\end{theorem}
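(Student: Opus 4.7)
The approach is to realize $H$ as the closure of a suitably chosen finitely generated subgroup $K$ of $F$.

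\emph{Step 1 --- Constructing $K$.} The plan is to build a finitely generated subgroup $K \le F$ whose conjugacy problem is undecidable. The tool is a Mihailova-style construction inside a copy of $F \times F \hookrightarrow F$ (for instance, using homeomorphisms supported on $[0,\tfrac12]$ and $[\tfrac12,1]$). Starting from a finitely presented group $G = \la X \mid R \ra$ with undecidable word problem, I would form a ``diagonal'' subgroup $K_0 \le F \times F$ whose generators encode the relations of $G$, then adjoin a few auxiliary generators and mark two elements $u,v \in K$ so that conjugacy of $u$ and $v$ in $K$ is equivalent to a chosen word $w$ of $G$ representing $1$. Since the word problem of $G$ is undecidable, this makes conjugacy in $K$ undecidable, while the word problem in $K$ remains decidable as it is inherited from $F$.

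\emph{Step 2 --- Passing to the closure.} Set $H = \overline{K}$, the smallest closed subgroup of $F$ containing $K$. By the closure theorem advertised in the abstract, $H$ is finitely generated. Moreover, closed subgroups of $F$ admit a concrete structural description coming from the very definition: a function $f \in F$ belongs to $H$ iff, along a fine enough tree diagram, every branch of $f$ agrees with some element of $H$. Given a candidate generating set of $H$, this translates into a finite check: refine a tree diagram of $f$ far enough that on each leaf the restriction matches a product of the generators, and verify compatibility. This yields decidability of membership in $H$.

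\emph{Step 3 --- Preserving undecidability of conjugacy.} The delicate point is that $H$ must not provide new conjugators for the pair $(u,v)$. I would design $u,v$ to be \emph{rigid}, meaning that any $h \in F$ satisfying $h^{-1} u h = v$ is forced by classical conjugacy invariants in $F$ (slopes at common fixed points, germs at the endpoints of the supports, and the structure of the revealing tree diagrams of $u$ and $v$) to already lie in $K$. Since $H \le F$, rigidity then implies that conjugacy of $u$ and $v$ in $H$ coincides with conjugacy in $K$, so conjugacy in $H$ is undecidable.

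The main obstacle is Step 3: engineering $K$ together with the distinguished pair $(u,v)$ so that the rigidity mechanism actually forces every $F$-conjugator into $K$, despite the fact that passing from $K$ to its closure $H$ may a priori introduce many new elements. Once rigidity is in place, undecidability of conjugacy transfers cleanly from $K$ to $H$, and combined with the membership algorithm from Step 2 this proves the theorem.
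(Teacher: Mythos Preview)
Your Step~1 fails at the outset: the Mihailova construction requires a copy of $F_n \times F_n$ with $F_n$ a non-abelian free group, but Thompson's group $F$ contains no non-abelian free subgroups (Brin--Squier). Hence there is no embedding $F_2 \times F_2 \hookrightarrow F$, and no Mihailova-type encoding of a finitely presented group with undecidable word problem is available inside $F$. Any variant of this idea meets the same obstruction, since such encodings generically require the free behaviour that $F$ lacks. Step~3 is also not an argument: you flag it as the main obstacle and offer only a hope that rigidity can be arranged, with no mechanism explaining why passage to the closure should not introduce new conjugators.

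The paper proceeds entirely differently and never builds a $K$ first. It constructs directly a finite tree rewriting system $\pp'$ whose associated semigroup is a group $G'$ with undecidable word problem, using a device of Campbell--Mitchell--Ru\v{s}kuc to convert a balanced group presentation into a positive semigroup presentation and then refining it to a tree system. The subgroup $H = DG(\pp', a_0)$ is closed by construction, and membership is decidable by Theorem~\ref{thm:closed} since $\pp'$ is finite. For undecidability of conjugacy the paper exhibits, for each pair of words $u,v$ over the generators of $G'$, explicit diagrams $\Delta(u), \Delta(v) \in H$ that are conjugate in $H$ if and only if $u = v$ in $G'$; this reduction uses the Guba--Sapir structure theory of conjugacy in diagram groups (decomposition of absolutely reduced diagrams into components), not any rigidity argument in $F$. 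Indeed the resulting $H$ is likely not finitely generated (see Section~\ref{open}), so it is not the closure of any finitely generated subgroup in the way your outline imagines.
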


The subgroup $H$ from Theorem \ref{mainInt} is a closed subgroup of $F$. Closed subgroups of $F$ were introduced in \cite{GS} (see also \cite{G}), in order to solve Savchuk's problem regarding the existence of non-parabolic maximal subgroups of $F$ of infinite index. They were also essential to the solution of the generation problem in Thompson's group $F$ (see \cite{G}).

Closed subgroups of $F$ can be defined in several different ways. Originally, they were defined by means of a directed $2$-complex associated with them (see Section \ref{sec:core}). They can also be defined as
diagram groups over tree rewriting systems (see Section \ref{sec:closed}). But the simplest description of closed subgroups of $F$ is the following.
\begin{df}
	Let $H$ be a subgroup of $F$. The \emph{closure} of $H$, denoted $Cl(H)$, is the subgroup of $F$ of all piecewise-$H$ functions.
\end{df}

Note that the closure of a subgroup $H$ of $F$ consists of all piecewise-linear functions $f$ from $F$ with finitely many pieces, such that each piece has dyadic endpoints and such that on each piece $f$ coincides with a restriction of some function from $H$. 

\begin{df}\label{def:closed}
	 A subgroup $H$ of $F$ is said to be \emph{closed} if $H=Cl(H)$.
\end{df}

There are many examples of interesting closed subgroups of $F$ (see Section \ref{sec:exa}). Recently, the first author showed that all maximal subgroups of $F$ of infinite index are closed \cite{G21}. Jones' subgroups $\overrightarrow{F}_n$ (see \cite{Jones,GS-J}) and Jones' $3$-colorable subgroup  $\mathcal F$ (see \cite{Ren,AN,Jones1}) are also closed subgroups of $F$.

In this paper we  prove the following.
\begin{theorem}\label{undisInt}
	Let $H$ be a finitely generated subgroup of $F$. Then the closure of $H$ is finitely generated and has linear distortion in Thompson's group $F$.
\end{theorem}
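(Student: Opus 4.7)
The strategy is to use the description of $Cl(H)$ as the diagram group of a finite directed $2$-complex --- the \emph{core} $L(H)$ of $H$ --- alluded to in the introduction. Given a finite generating set $h_1,\dots,h_k$ of $H$, one constructs $L(H)$ from the tree-pair diagrams of the generators by identifying their roots and applying the Stallings-style foldings developed in the earlier papers; since foldings only identify existing edges and cells, the resulting complex is finite. By the previously established correspondence between closed subgroups of $F$ and diagram groups of directed $2$-complexes, $Cl(H)$ is isomorphic to the diagram group of $L(H)$. The diagram group of any finite directed $2$-complex is finitely generated, with an atomic generator for each $2$-cell (paired with a finite choice of tree-diagram base), so finite generation of $Cl(H)$ follows immediately.

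For the distortion estimate it is enough to show $|g|_{Cl(H)}\le C\cdot|g|_F$ for every $g\in Cl(H)$ and some constant $C=C(H)$, since the reverse inequality is automatic (each $Cl(H)$-generator has bounded length as an element of $F$). The plan rests on two standard metric identifications: the word length of $g$ in $F$ is, up to a bi-Lipschitz constant, the number of carets in the minimal tree-pair representing $g$; and the word length in the diagram group of a finite directed $2$-complex is, up to a bi-Lipschitz constant, the number of cells in the minimal reduced diagram over that complex. Given $g\in Cl(H)$, the plan is to take its minimal $F$-tree-pair $(T^+,T^-)$ and lift it leaf-by-leaf to a reduced diagram over $L(H)$. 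The piecewise-$H$ property guarantees that on each leaf-interval of $(T^+,T^-)$ the function $g$ coincides with a restriction of some element of $H$, and the finiteness of $L(H)$ then restricts the local transition data at each leaf to finitely many possibilities, so each caret contributes only $O(1)$ cells to the lifted diagram. Summing yields $|g|_{Cl(H)}=O(|g|_F)$.

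The principal obstacle will be verifying that the leaf-by-leaf lift genuinely produces a \emph{reduced} diagram over $L(H)$, so that its cell-count is an upper bound on $|g|_{Cl(H)}$ rather than just the size of some arbitrary representative. This requires a careful matching between the vertex/edge labels of $L(H)$ and the local germs of $g$ at each leaf of the $F$-tree-pair, using compatibility along consecutive leaves coming from the piecewise-$H$ condition. The uniformity of the $O(1)$ blowup per caret relies crucially on the finiteness of $L(H)$, a feature inherited from the finite generation of $H$; thus the two conclusions of the theorem emerge from a single core-based argument, in which the finiteness of $L(H)$ simultaneously supplies a finite generating set for $Cl(H)$ and bounds, by a uniform constant, the number of core-cells needed per $F$-caret.
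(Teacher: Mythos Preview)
Your proposal rests on two assertions that you treat as ``standard'' but which are, respectively, false and open.

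First, you claim that the diagram group of any finite directed $2$-complex is finitely generated. This is not true: as noted explicitly in the paper (just after the definition of diagram groups in Section~\ref{sec:diagrams}), a finitely given diagram group need not be finitely generated. The finiteness of the core $\mathcal{C}(H)$ does not by itself yield finite generation of $Cl(H)\cong DG(\mathcal{P},\rho)$. The paper's argument for finite generation is substantial: it passes to a \emph{semi-completion} $\mathcal{P}'$ of the core rewriting system (adding auxiliary rules $R_\iota$ and $R_\tau$ built from shortest paths in the core), exhibits a finite generating set $X$ for $DG(\mathcal{P}',\rho)$ via an analysis of principal left edges in the Squier complex (Theorem~\ref{X} and Lemma~\ref{gen_set}), and then pushes $X$ down to $DG(\mathcal{P},\rho)$ via a retract (Lemma~\ref{retract}). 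Your ``one atomic generator per $2$-cell'' picture has no basis.

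Second, you invoke as a ``standard metric identification'' that word length in the diagram group of a finite complex is bi-Lipschitz equivalent to the number of cells in the reduced diagram. This is precisely Property~B, and the paper points out (just before Corollary~\ref{Property B}) that it is an \emph{open question} whether all finitely generated diagram groups have Property~B. Establishing Property~B for $DG(\mathcal{P},\rho)$ is the entire content of Lemma~\ref{N(delta)}, and its proof is the technical heart of the distortion statement: one writes $\Delta$ as a product along a path in the Squier complex, uses Corollary~\ref{red} to conjugate each edge into the generating set $X$, and then carefully bounds the total length of the conjugating right-derivations using Lemmas~\ref{right der} and~\ref{length}. Your leaf-by-leaf lifting discussion is also off target: the natural embedding of $DG(\mathcal{P},\rho)$ into $F$ simply forgets edge labels, so a reduced diagram over $\mathcal{P}$ already has exactly the same cells as its image in $F$ --- there is no ``$O(1)$ blowup per caret'' to control. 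The difficulty is not lifting the diagram but expressing it efficiently as a word in a fixed finite generating set, and that is where all the work lies.
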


Theorem \ref{undisInt} solves Problems 5.5 and 5.7 in \cite{GS}. Since all maximal subgroups of $F$ of infinite index are closed, Theorem \ref{undisInt} implies that all finitely generated maximal subgroups of $F$ (for instance, the stabilizers in $F$ of rational points of $(0,1)$ \cite{GS2}) are undistorted in $F$. Other applications of Theorem \ref{undisInt} are detailed in Section \ref{sec:exa}.

\vskip .2cm
\textbf{Organization:} In Section \ref{sec:pre} we recall the definition of diagram groups and give two equivalent definitions for closed subgroups of $F$. In Section \ref{sec:con} we prove Theorem \ref{mainInt}.
In Section \ref{sec:semi} we define and study semi-complete rewriting systems in preparation for Section \ref{sec:fin}, where Theorem \ref{undisInt} is proved. In Section \ref{sec:exa} we give examples for subgroups of $F$ which are undistorted in $F$ due to Theorem \ref{undisInt}.  In Section \ref{open} we discuss some open problems.

{\bf Acknowledgements.} The authors are greateful to Benjamin Steinberg for pointing their attention to \cite{CMR}.

\section{Preliminaries}\label{sec:pre}

\subsection{Diagram groups}\label{sec:diagrams}

\begin{df}[Diagrams]
Let $\pp=\la\Sigma\mid R\ra$ be a string rewriting system  (i.e., a semigroup presentation, see, for example \cite[Section 7.1]{Sapir}).

Informally, a diagram $\Delta$ over $\pp$ is a plane directed graph with edges labeled by letters from $\Sigma$ which is a tessellation of a disc, has two special vertices $\iota(\Delta)$ and $\tau(\Delta)$ and two special positive paths $\topp(\Delta),\bott(\Delta)$, both connecting $\iota(\Delta)$ with $\tau(\Delta)$ such that $\Delta$ is situated between $\topp(\Delta)$ and $\bott(\Delta)$. Two diagrams are {\em isotopic} if there exists a smooth deformation of the plane taking one of them to the other.

Formally, for every 
$a\in \Sigma$, let $\epsilon(a)$ be an edge $e$ labeled by $a$. It is a diagram $\Delta$  with vertices $\iota(\Delta)=\iota(e), \tau(\Delta)=\tau(e)$ and paths $\topp(\Delta)=\bott(\Delta)=e$.
For every 
rewriting rule $\ell\to r$ in $R$, let $\Delta(\ell\to r)$ be a planar graph consisting of two directed labeled paths,
the top path labeled by the word $\ell$ and the bottom path labeled by the word $r$,
connecting the same points $\iota(\ell\to r)$ and $\tau(\ell\to r).$
There are three operations that can be applied to diagrams in
order to obtain new diagrams.

(1) {\bf Addition.} Given two diagrams $\Delta_1$ and $\Delta_2$,
one can identify $\tau(\Delta_1)$ with $\iota(\Delta_2).$ The
resulting planar graph is again a diagram denoted by
$\Delta_1+\Delta_2$, whose top (bottom) path is the concatenation
of the top (bottom) paths of $\Delta_1$ and $\Delta_2.$ If
$u=x_1x_2\ldots x_n$ is a word in $X$, then we denote
$\varepsilon(x_1)+\varepsilon(x_2)+\cdots + \varepsilon(x_n)$ (i.e., a simple path labeled by $u$) by $\varepsilon(u)$  and call
this diagram 
 \emph{trivial}\index[g]{diagram!trivial}.

(2) {\bf Multiplication.} If the label of the bottom path of
$\Delta_1$ coincides with the label of the top path of $\Delta_2$,
then we can \emph{multiply} $\Delta_1$ and $\Delta_2$, identifying
$\bott(\Delta_1)$ with $\topp(\Delta_2).$ The new diagram is
denoted by $\Delta_1\circ \Delta_2.$ The vertices
$\iota(\Delta_1\circ \Delta_2)$ and $\tau(\Delta_1\circ\Delta_2)$
coincide with the corresponding vertices of $\Delta_1, \Delta_2$,
$\topp(\Delta_1\circ \Delta_2)=\topp(\Delta_1),
\bott(\Delta_1\circ \Delta_2)=\bott(\Delta_2).$

\begin{figure}
\begin{center} 
\unitlength=1mm
\special{em:linewidth 0.4pt}
\linethickness{0.4pt}
\begin{picture}(124.41,55.00)
\put(1.00,30.00){\circle*{2.00}}
\put(46.00,30.00){\circle*{2.00}}
\put(1.00,30.00){\line(1,0){45.00}}
\bezier{320}(1.00,30.00)(24.00,55.00)(46.00,30.00)
\bezier{332}(1.00,30.00)(24.00,5.00)(46.00,30.00)
\put(24.00,35.00){\makebox(0,0)[cc]{$\Delta_1$}}
\put(24.00,25.00){\makebox(0,0)[cc]{$\Delta_2$}}
\put(24.00,10.00){\makebox(0,0)[cc]{$\Delta_1\circ\Delta_2$}}
\put(66.00,30.00){\circle*{2.00}}
\put(94.00,30.00){\circle*{2.00}}
\put(123.00,30.00){\circle*{2.00}}
\bezier{164}(66.00,30.00)(80.00,45.00)(94.00,30.00)
\bezier{152}(66.00,30.00)(81.00,17.00)(94.00,30.00)
\bezier{172}(94.00,30.00)(109.00,46.00)(123.00,30.00)
\bezier{168}(94.00,30.00)(110.00,15.00)(123.00,30.00)
\put(80.00,30.00){\makebox(0,0)[cc]{$\Delta_1$}}
\put(109.00,30.00){\makebox(0,0)[cc]{$\Delta_2$}}
\put(94.00,10.00){\makebox(0,0)[cc]{$\Delta_1+\Delta_2$}}
\end{picture}
\end{center}
\caption{Operations on diagrams}\label{fig:op}
\end{figure}
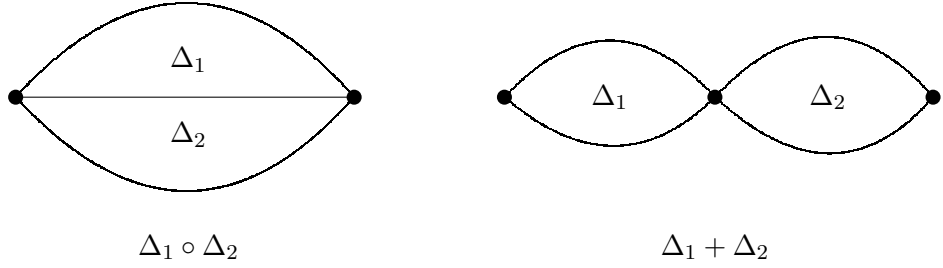

(3) {\bf Inversion.} Given a 
 diagram $\Delta$, we can flip it about
a horizontal line to obtain a new 
 diagram $\Delta\iv$ whose top
(bottom) path coincides with the bottom (top) path of $\Delta.$

A diagram over $\pp$ is any graph obtained from the diagrams  $\epsilon(a)$, $a\in\Sigma$ and $\Delta(\ell\to r)$, $\ell\to r \in R$ by using the  above three operations.


If $u,v$ are words over $\Sigma$
 then a diagram $\Delta$ over $\pp$ is called a $(u,v)$-diagram if the label of $\topp(\Delta)$ is $u$ and the label of $\bott(\Delta)$ is $v$. If $u\equiv v$ (i.e., if $u$ and $v$ are equal letter by letter), the diagram is called {\em spherical}.

A subdiagram of a diagram $\Delta$ which is equal to $\Delta(\ell\to r)^{\pm 1}$, for $\ell\to r\in R$ is called a {\em cell}. Every diagram is a disc (possibly degenerate) tesselated by cells.
\end{df}

Note that if $u$ and $v$ are finite words over $\Sigma$, then there exists a $(u,v)$-diagram over $\mathcal P$ if and only if $u=v$ in the semigroup defined by the presentation $\mathcal P$ \cite{GuSa}.

\begin{df}[Diagram groups] Let $\pp=\la \Sigma\mid R\ra$ be a string rewriting system.

Two cells in a  diagram over $\pp$ form a \emph{dipole}\index[g]{dipole} if the
bottom path of the first  cell coincides with the top path of the
second cell, and the cells are inverses of each other. In this
case, we can obtain a new diagram by removing the two cells and
replacing them by the top path of the first cell. This operation
is called{ \em elimination of dipoles}. The inverse operation is called {\em insertion of dipoles}. We say that two diagrams 
over $\pp$ are \emph{equivalent} 
if one can be obtained from the other by a finite series of elimination and insertion of dipoles.   A
diagram is called \emph{reduced} if it does
not contain dipoles. Every diagram $\Delta$  is equivalent to a 
 unique reduced diagram 
 obtained from $\Delta$ by elimination of dipoles
 \cite{GuSa}.

For every word $w$ the set of all reduced $(w,w)$-diagrams with the product: multiplication followed by reduction is a group denoted by $DG(\pp,w)$ where the identity element is $\epsilon(w)$ and the inverse of $\Delta$ is $\Delta\iv$.
\end{df}
A diagram group $DG(\pp,w)$ is called {\em finitely given} if $\pp$ is finite. As shown in \cite{GuSa} a finitely given diagram group is not necessarily finitely presented or even finitely generated.

\begin{df}[See \cite{GuSa}]\label{Dunce} Let $\rr=\la x\mid x^2\to x\ra$. Then $DG(\rr,x)$ is isomorphic to the R. Thompson group $F$.
\end{df}

\subsection{Squier complexes}

Let $\mathcal P=\la \Sigma\mid R\ra$ be a string rewriting system \cite{Sapir}. With the rewriting  system, one can associate a directed graph $\Gamma$ as follows. The vertex set of $\Gamma$ is the set of all finite words over the alphabet $\Sigma$. Positive edges of $\Gamma$ are tuples of the form $(u,\ell\to r,v)$ where $u,v$ are finite words over $\Sigma$ and $\ell\to r\in R$. The edge $e=(u,\ell\to r,v)$ is an edge  from $u\ell v$ to $urv$. Each positive edge $e=(u,\ell\to r,v)$ has an inverse negative edge $e^{-1}=(u,r\to \ell,v)$ from $urv$ to $u\ell v$.

It is also useful to associate a $2$-complex $K(\mathcal P)$ with the string rewriting system. The 1-skeleton of $K(\mathcal P)$ is the directed graph $\Gamma$. The $2$-cells are $5$ tuples of the form $(u,\ell\to r, q, s\to t, v)$, where $u,\ell,r,q,s,t,v$ are finite words over $\Sigma$  such that $\ell\to r$ and $s\to t$ are rewriting rules from $R$. Such a $2$-cell is glued along the cyclic path $$(u,\ell\to r,qsv)(urq,s\to t,v)(u,r\to\ell,qtv)(u\ell q,t\to s,v).$$
Note that the $2$-cells correspond to independent applications of relations from $R$. Applications of relations $\ell\to r$ and $s\to t$ are called \emph{independent} if the corresponding occurrences of $\ell$ and $s$ do not have common letters (then, the order in which these relations are applied does not affect the result). The $2$-complex $K(\mathcal P)$ is called the \emph{Squier complex} of $\mathcal P$.

For an edge $e$ of $K(\pp)$, we denote by $e_-$ and $e_+$ the initial and terminal vertices of $e$, respectively. A \emph{path} $p$ on $K(\pp)$ is a sequence of edges $p=e_1\cdots e_n$, such that for each $i<n$, we have ${e_i}_+={e_{i+1}}_-$.  The initial vertex of the path is $p_-={e_1}_-$ and the terminal vertex of the path is $p_+={e_n}_+$ (if the path is empty we might refer to any vertex as its initial and terminal vertex). We say that the path $p$ \textit{passes through} all the vertices that edges in the path are incident to. The \emph{length} of the path is the number of edges in the path. We will use the same terminology when considering paths in other directed $2$-complexes (including diagrams over $\mathcal P$).

There is a natural mapping $\delta$ from edges of $K(\mathcal P)$ to diagrams over $\mathcal P$. Let $e=(u,\ell\to r,v)$ be a positive edge of $K(\mathcal P)$. Then $\delta(e)=\epsilon(u)+\Psi+\epsilon(v)$ where $\Psi$ is the elementary $(\ell,r)$-diagram. The inverse edge $(u,r\to\ell,v)$ is mapped to the inverse diagram. The mapping $\delta$ extends to paths on Squier's complex $K(\mathcal P)$. 
Let $p=e_1\cdots e_n$ be a path in $K(\mathcal P)$ then
 $\delta(p)=\delta(e_1)\circ\delta(e_2)\circ \cdots\circ\delta(e_n)$. It is proved in \cite{GuSa} that if $p$ and $p'$ are homotopic paths in $K(\mathcal P)$ then the diagrams $\delta(p)$ and $\delta(p')$ are equivalent. Note also that if $p$ is a path in $K(\mathcal P)$ from a vertex $w$ to itself then $\delta(p)$ is a $(w,w)$-diagram and hence, an element of $DG(\mathcal P,w)$. Hence, for every finite word $w$ over $\Sigma$, $\delta$ induces  a mapping (also denoted by $\delta$) from $\pi_1(K(\mathcal P),w)$ to $DG(\mathcal P,w)$.  By \cite[Theorem 6.1]{GuSa} this mapping is an isomorphism.

\begin{theorem}[\cite{GuSa}]
	Let $\mathcal P=\la \Sigma\mid R\ra$ be a string rewriting system and let $w$ be a finite word over $\Sigma$. Then the diagram group $DG(\mathcal P, w)$ is isomorphic to the fundamental group $\pi_1(K(\mathcal P), w)$.
\end{theorem}

\subsection{Closed subgroups}\label{sec:closed}
\begin{df}
	Let $\mathcal P=\la \Sigma\mid R\ra$ be a string rewriting system. The rewriting system is called a \emph{tree} rewriting system if the following conditions hold
	\begin{enumerate}
		\item[(1)] For any $\ell\to r\in R$, the length $|\ell|=2$ and $|r|=1$.
		\item[(2)] If $\ell_1\rightarrow r_1$ and $\ell_2\rightarrow r_2$ are rewriting rules in $R$, then $\ell_1\equiv \ell_2$ if and only if $r_1\equiv r_2$. In other words, there is at most one rewriting rule with a given left hand-side and at most one rewriting rule with a given right-hand side.
	\end{enumerate}
\end{df}

\begin{lemma}\label{natural}
	Let $\mathcal P=\la \Sigma\mid R\ra$ be a tree  rewriting system. Let $a\in \Sigma$. Then the diagram group $DG(\mathcal P,a)$ naturally embeds into Thompson's group $F$.
\end{lemma}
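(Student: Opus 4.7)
The plan is to define a natural map $\phi\colon DG(\mathcal P,a)\to F=DG(\mathcal R,x)$ by \emph{label erasure}. Since every rule $\ell\to r$ in $R$ has $|\ell|=2$ and $|r|=1$, the operation that relabels every edge of a diagram over $\mathcal P$ by the single letter $x$ turns each elementary cell $\Delta(\ell\to r)$ into $\Delta(x^2\to x)$, and turns an $(a,a)$-diagram into an $(x,x)$-diagram, i.e.\ into an element of $F$. This assignment clearly commutes with the addition, multiplication, and inversion of diagrams, so once I know it is well-defined on reduced representatives it automatically yields a group homomorphism.

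For well-definedness, I will verify that $\phi$ sends reduced diagrams to reduced diagrams. Suppose that $\phi(\Delta)$ contains a dipole, i.e.\ two cells $\Delta(x^2\to x)$ and $\Delta(x^2\to x)^{-1}$ glued along a single interior edge. The corresponding cells in $\Delta$ are $\Delta(\ell_1\to r_1)$ and $\Delta(\ell_2\to r_2)^{-1}$ glued along an edge whose label, read from the first cell's bottom, is $r_1$, and from the second cell's top, $r_2$; hence $r_1\equiv r_2$. Condition (2) of a tree rewriting system then forces $\ell_1\equiv\ell_2$, so the two cells already form a dipole in $\Delta$, contradicting that $\Delta$ is reduced. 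Therefore $\phi$ descends to a homomorphism $DG(\mathcal P,a)\to F$.

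For injectivity, let $\Delta_1$ and $\Delta_2$ be reduced $(a,a)$-diagrams with $\phi(\Delta_1)=\phi(\Delta_2)$. They then share the same underlying planar cell complex and only the edge labels may differ. The top boundary is a single edge labeled $a$ in both. I propagate labels cell by cell: each cell has a length-$2$ side and a length-$1$ side related by some rule $\ell\to r\in R$, and knowing the labels on one side determines the rule --- uniquely, by condition (2) --- and hence the labels on the other side. Since the underlying $1$-skeleton is connected, this propagation fixes every edge label, forcing $\Delta_1=\Delta_2$.

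The main obstacle, and the reason for the precise form of the hypothesis, is condition (2): without the bijection between the left- and right-hand sides of the rules, dipoles could be created in $\phi(\Delta)$ that are not present in $\Delta$ (breaking well-definedness), and labels could fail to propagate uniquely through a cell (breaking injectivity). All the remaining verifications reduce to the standard combinatorics of diagrams recalled in Section~\ref{sec:diagrams}.
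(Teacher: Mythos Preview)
Your proof is correct and follows essentially the same approach as the paper: define the label-erasure map to $DG(\mathcal R,x)$ and show, using condition~(2) of a tree rewriting system, that a dipole in the image forces a dipole in the source, so reduced diagrams map to reduced diagrams. The paper uses this single observation directly as the injectivity argument (a reduced non-trivial $\Delta$ has reduced non-trivial image, hence the kernel is trivial); your separate label-propagation argument for injectivity is correct but redundant once you have ``reduced $\mapsto$ reduced''. Two small remarks: your dipole argument only treats the case where the shared boundary is the length-$1$ side --- the length-$2$ case is symmetric (and the paper likewise dismisses it with ``similarly''); and the propagation step is better justified by an ordering of cells (e.g.\ as in Definition~\ref{rl}) than by connectedness of the $1$-skeleton.
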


\begin{proof}
	We define a mapping $\Phi$ from $DG(\mathcal P,a)$ to Thompson's group $F$ as follows. Given a diagram $\Delta$ in $DG(\mathcal P,a)$, we replace the label of every edge in $\Delta$ by $x$. By Definition \ref{Dunce}, the result, $\Phi(\Delta)$, is a diagram in Thompson's group $F$. It suffices to prove that this mapping is injective. To do so, we show that if the diagram $\Delta$ is non-trivial and reduced, then $\Phi(\Delta)$ is also reduced and as such, non-trivial. Assume by contradiction that there is a dipole in $\Phi(\Delta)$. Then there are two cells $\pi_1$ and $\pi_2$ in $\Phi(\Delta)$ such that $\bott(\pi_1)=\topp(\pi_2)$ and such that $\pi_1^{-1}=\pi_2$.
	Consider the cells $\pi_1'$ and $\pi_2'$ in $\Delta$ which correspond to the cells $\pi_1$ and $\pi_2$ in $\Phi(\Delta)$. Since  $\bott(\pi_1')=\topp(\pi_2')$, there is a word $u$ over the alphabet $\Sigma$ such that $\bott(\pi_1')=\topp(\pi_2')$ is labeled by $u$. Let $v_1$ be the label of $\topp(\pi_1')$ and $v_2$ be the label of $\bott(\pi_2')$.
	If $|u|=1$ then $|v_1|=|v_2|=2$ and both $ v_1\to u$ and $v_2\to u$ are rewriting rules in $R$. In that case, since $\mathcal P$ is a tree rewriting system, $v_1\equiv v_2$. Hence, $\pi_1'={\pi_2'}^{-1}$, in contradiction to $\Delta$ being reduced. If $|u|=2$ we get a contradiction in a similar way.
\end{proof}

In view of Lemma \ref{natural} we will often refer to diagram groups $DG(\mathcal P,a)$ where $\mathcal P=\la \Sigma\mid R\ra$ is a tree rewriting system and $a\in \Sigma$ as subgroups of $F$.

\begin{df}\label{closed}
	Let $\mathcal P=\la \Sigma\mid R\ra$ be a tree  rewriting system and let $a\in \Sigma$. Then the diagram group $DG(\mathcal P,a)$, viewed as a subgroup of Thompson's group $F$ (via the natural embedding described in the previous lemma) is called a \emph{closed} subgroup of $F$.
\end{df}



Note that Definition \ref{closed} is equivalent to Definition \ref{def:closed} from the introduction. That is, the following holds.




\begin{lemma}[{\cite[Theorem 1.1]{G}}]\label{dyadic}
	Let $H$ be a subgroup of $F$. Then $H$ is a closed subgroup of $F$ (i.e., $H$  is the natural image of some diagram group $DG(\mathcal P,a)$ where $\mathcal P=\la \Sigma\mid R\ra$ is a tree rewriting system and $a\in\Sigma$) if and only if $H=Cl(H)$\footnote{The definition of $F$ as a group of homeomorphisms of the interval $[0,1]$ and its relation to the definition of $F$ as a diagram group is recalled in Section \ref{sec:exa}.}. 
%
\end{lemma}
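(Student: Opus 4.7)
The plan is to prove the biconditional in two directions. For the forward direction, suppose $H = DG(\mathcal{P}, a)$ is naturally embedded in $F$ via the map $\Phi$ of Lemma \ref{natural}, and let $f \in Cl(H)$ come with a dyadic subdivision $0 = d_0 < \cdots < d_k = 1$ on which $f$ agrees piecewise with elements $h_i \in H$. After refinement I may assume each $I_i = [d_{i-1}, d_i]$ is a standard dyadic interval. Iterated inverse applications of the rules of $\mathcal{P}$ starting from $a$ assign a unique letter $\sigma_i \in \Sigma$ to $I_i$, and analogously each image interval $f(I_i)$ gets a letter $\tau_i$. I would then insert dipoles into a diagram $\Delta_i$ representing $h_i$ so that the endpoints of $I_i$ appear on its top path (with subword $\sigma_i$ between them) and the endpoints of $f(I_i)$ appear on its bottom (with subword $\tau_i$); planarity cuts out a $(\sigma_i, \tau_i)$-subdiagram $\Delta_i'$. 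Concatenating the $\Delta_i'$ horizontally and attaching the canonical expansion of $a$ to $\sigma_1 \cdots \sigma_k$ above and contraction of $\tau_1 \cdots \tau_k$ to $a$ below produces a diagram in $DG(\mathcal{P}, a)$ whose $\Phi$-image is $f$, establishing $f \in H$.

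For the reverse direction, given $H = Cl(H)$, I would construct $\mathcal{P}$ from $H$. Declare standard dyadic intervals $I \sim J$ if some element of $H$ restricts on $I$ to the affine bijection $I \to J$; let $\Sigma$ be the set of $\sim$-classes and take $a$ to be the class of $[0,1]$. For each class $\sigma = [I]$ add the rule $[I_L][I_R] \to [I]$, where $I_L, I_R$ are the halves of $I$; well-definedness (independence of the representative) follows since affine maps preserve midpoints, so $I \sim J$ forces $I_L \sim J_L$ and $I_R \sim J_R$. Condition (1) of the tree rewriting system is immediate. For condition (2), each right-hand side determines a unique rule by construction; and if a left-hand side $[I_L][I_R]$ decomposes both $[I]$ and $[J]$, then the closure hypothesis lets me glue the $H$-elements realizing $I_L \to J_L$ and $I_R \to J_R$ — one used on the interval left of the midpoint of $I$, the other right of it — into a piecewise-$H$ element of $F$, which lies in $H$ by closure and maps $I$ affinely to $J$, giving $[I] = [J]$.

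The final step is to identify $DG(\mathcal{P}, a) \cong H$ inside $F$. Given $h \in H$, its tree-pair representation has leaves $I_i, J_i$ with $I_i \sim J_i$ (witnessed by the affine restriction of $h$), and the internal-vertex labels are canonically determined by the rules, producing an $(a,a)$-diagram over $\mathcal{P}$ whose $\Phi$-image is $h$; this yields a map $\Psi : H \to DG(\mathcal{P}, a)$ inverse to $\Phi$. Conversely, any diagram in $DG(\mathcal{P}, a)$ represents an $F$-element that is affine between $\sim$-equivalent leaf intervals, hence piecewise-$H$, hence in $Cl(H) = H$. I expect the main obstacle to be the reverse direction: verifying condition (2) of $\mathcal{P}$ and identifying $DG(\mathcal{P}, a) \cong H$ both depend delicately on the closure hypothesis, used at each step to turn local $H$-compatibility into a global $H$-element.
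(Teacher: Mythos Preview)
The paper does not give its own proof of this lemma; it is imported from \cite[Theorem 1.1]{G}. What the paper does supply (Section~\ref{sec:core}) is a sketch of the machinery from \cite{G}: one builds the Stallings $2$-core $\mathcal C(H)$ by gluing the diagrams of a generating set along their top/bottom edges and applying foldings, reads off the core rewriting system $\mathcal P$ (letters $=$ edges of $\mathcal C(H)$, rules $=$ cells), and then quotes the result that $DG(\mathcal P,\rho)=Cl(H)$.

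Your argument is correct and self-contained, and in the reverse direction it takes a somewhat different route. Rather than constructing $\mathcal P$ syntactically via foldings from a generating set, you define $\Sigma$ semantically as $H$-affine-equivalence classes of standard dyadic intervals and write down the rules directly. This is cleaner for the purpose at hand: the verification of condition~(2) and the identification $DG(\mathcal P,a)=H$ are short and use the hypothesis $H=Cl(H)$ exactly where one expects. The core construction, by contrast, is designed to be algorithmic from a finite generating set and to produce a \emph{finite} $2$-complex when $H$ is finitely generated; that finiteness is what the paper actually needs in Section~\ref{sec:fin}, and your construction does not make it visible (your $\Sigma$ is a priori infinite even when $H$ is finitely generated). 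For the bare equivalence of Definitions~\ref{def:closed} and~\ref{closed}, however, your approach is arguably more transparent.

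One small point in your forward direction: the sentence ``iterated inverse applications of the rules of $\mathcal P$ starting from $a$ assign a unique letter $\sigma_i$ to $I_i$'' presupposes that the (unique when it exists) expansion from $a$ actually reaches $I_i$. This is not automatic for an arbitrary tree rewriting system, but here it follows because after inserting dipoles $I_i$ appears as a top edge in the diagram for $h_i\in DG(\mathcal P,a)$, which exhibits the required expansion. It would be worth saying this explicitly.
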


We recall some properties of diagrams over tree-rewriting systems.

\begin{df}
	Let $\mathcal P=\la \Sigma\mid R\ra$ be a tree  rewriting system and let $\Delta$ be a diagram over $\mathcal P$. A cell $\pi$ in $\Delta$ is an \emph{expanding cell} if $|\topp(\pi)|=1$ and $|\bott(\pi)|=2$. Otherwise, $\pi$ is called a \emph{reducing cell}.
\end{df}


The following lemma is proved in a more general form in \cite{GuSa}.

\begin{lemma}\label{horizontal}
	Let $\mathcal P=\la \Sigma\mid R\ra$ be a tree  rewriting system. Let $\Delta$ be a reduced spherical diagram over $\mathcal P$. Then there is a unique path $p$ in the diagram $\Delta$ (from $\iota(\Delta)$ to $\tau(\Delta)$) which passes through all the vertices of the diagram. The path $p$ is called the \emph{horizontal path} of $\Delta$. It separates $\Delta$ into two subdiagrams $\Delta^+$ and $\Delta^-$ such that $\Delta=\Delta^+\circ \Delta^-$. The number of cells in $\Delta^+$ is equal to the number of cells in $\Delta^-$ and every cell in $\Delta^+$ is expanding, while every cell in $\Delta^-$ is reducing.
\end{lemma}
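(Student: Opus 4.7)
The plan is to induct on the number of cells $|\Delta|$. The base case $|\Delta|=0$ is immediate: $\Delta=\epsilon(w)$ is a single path from $\iota(\Delta)$ to $\tau(\Delta)$, which serves as the horizontal path, with $\Delta^+=\Delta^-=\epsilon(w)$.

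The heart of the argument is the following combinatorial observation, which is the only place where condition (2) in the definition of a tree rewriting system is used: in any reduced diagram over $\mathcal P$, no reducing cell $\pi_r$ can lie directly above an expanding cell $\pi_e$, meaning $\bott(\pi_r)$ (a single edge) and $\topp(\pi_e)$ (a single edge) cannot coincide. Indeed, if the shared edge were labeled by $a\in\Sigma$, then $\pi_r$ would correspond to some rule $\ell\to a\in R$ and $\pi_e$ to a rule $\ell'\to a$ read in reverse; uniqueness of the rule with right-hand side $a$ forces $\ell=\ell'$, so $\pi_r$ and $\pi_e$ are mutually inverse and form a dipole, contradicting reducedness. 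Two consequences follow. First, since $\Delta$ is spherical, the numbers of expanding and reducing cells are equal, so if $|\Delta|\geq 1$ there is at least one expanding cell. Second, any expanding cell $\pi^+$ that is maximal in the ``directly above'' order on expanding cells must have its top edge on $\topp(\Delta)$: no expanding cell lies directly above $\pi^+$ by maximality, and no reducing cell can lie directly above $\pi^+$ by the observation.

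The induction step then peels off $\pi^+$, writing $\Delta=D\circ\Delta'$ where $D$ is a one-cell expanding diagram supported at the position of $\pi^+$ and $\Delta'$ is a reduced diagram with $|\Delta|-1$ cells. Since $\Delta'$ is no longer spherical, I carry out the induction in the broader class of all reduced diagrams over $\mathcal P$; the decomposition $\Delta=\Delta^+\circ\Delta^-$ with $\Delta^+$ a stack of expanding cells and $\Delta^-$ a stack of reducing cells makes sense in this generality. The induction hypothesis gives $\Delta'=(\Delta')^+\circ(\Delta')^-$, and I set $\Delta^+:=D\circ(\Delta')^+$, $\Delta^-:=(\Delta')^-$. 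That the horizontal path $p=\bott(\Delta^+)=\topp(\Delta^-)$ passes through every vertex of $\Delta$ follows from the forest structure of $\Delta^+$: each vertex of $\Delta^+$ (endpoint of the diagram, internal vertex of the top path, or middle vertex of an expanding cell) propagates downward through repeated expansions to end up on $\bott(\Delta^+)$; a symmetric argument handles $\Delta^-$. Uniqueness of $p$ as a positive path from $\iota(\Delta)$ to $\tau(\Delta)$ through all vertices follows from the rigidity of the partition of cells into expanding and reducing parts dictated by the combinatorial observation. The main obstacle I expect is the careful bookkeeping of the planar geometry when peeling $\pi^+$, ensuring that $\Delta'$ is a bona fide reduced diagram whose vertex set and inductively constructed horizontal path transport correctly back to $\Delta$.
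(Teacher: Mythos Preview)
The paper does not actually supply a proof of this lemma; it simply states that the result ``is proved in a more general form in \cite{GuSa}'' and moves on. So there is no in-paper argument to compare against. Your inductive proof is a legitimate self-contained substitute, and the key combinatorial observation (a reducing cell directly above an expanding cell would be a dipole, by uniqueness of the rule with a given right-hand side) is exactly the point where the tree hypothesis enters; this is also the mechanism behind the Guba--Sapir argument.

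Two places deserve a little more care. First, once you pass to the broader class of all reduced diagrams, the induction step must also handle the case where $\Delta$ has no expanding cells at all (your existence argument for a top-level expanding cell presupposes there is one); this is easy---take $\Delta^+=\epsilon(\lab(\topp\Delta))$ and $\Delta^-=\Delta$---but it should be said. Second, your uniqueness argument is currently a sentence of intent rather than a proof. One clean way to finish it: any positive path $p$ from $\iota(\Delta)$ to $\tau(\Delta)$ through all vertices splits $\Delta$ as $\Delta_1\circ\Delta_2$ with $\bott(\Delta_1)=p=\topp(\Delta_2)$; since every vertex of $\Delta_1$ lies on $\bott(\Delta_1)$, no cell of $\Delta_1$ can be reducing (the middle vertex of its top path would lie strictly above $\bott(\Delta_1)$), so $\Delta_1$ consists of expanding cells only, and dually for $\Delta_2$. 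Then $p$ is forced to be $\bott(\Delta^+)$ for the decomposition you built, because an edge lies on $p$ iff it is neither the top of an expanding cell nor the bottom of a reducing cell, and that description is intrinsic to $\Delta$. With these two clarifications your argument is complete.
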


It will often be useful to enumerate cells in reduced diagrams $\Delta$ over tree rewriting systems.

\begin{df}[right to left enumeration of cells]\label{rl}
	Let $\mathcal P=\la \Sigma \mid R\ra$ be a tree  rewriting system and let $a\in \Sigma$. Let  $\Delta$ be a reduced $(a,a)$-diagram over $\mathcal P$ and assume that the horizontal path $p$ of $\Delta$ is of length $n$. Then $\Delta=\Delta^+\circ \Delta^-$ and each of the subdiagrams $\Delta^+$ and $\Delta^-$ has $n-1$ cells.
	Let us enumerate the cells of $\Delta^+$ from
	$1$ to $n-1$ by taking every time the ``rightmost'' cell,
	that is, the cell which is to the right of any other cell attached to the
	bottom path of the subdiagram formed by the previous cells.
	The first cell is
	attached to the top path of $\Delta^+$ (which is the top path of $\Delta$).
	For each $i$, let $q_i$ be the bottom path of the subdiagram formed by the first $i-1$ cells (where if $i=1$, we let $q_i=\topp(\Delta^+)$).
	Then the $i^{th}$ cell $\pi_{i}$ in
	this sequence of cells corresponds to an atomic diagram,
	which has the form $\Psi_{i}=(u_{i},r_{i}\to \ell_{i},v_{i})$, such that
	\begin{enumerate}
		\item[(1)] $u_ir_iv_i$ is the label of the path $q_i$.
		\item[(2)] $v_i$
		is the label of longest suffix of $q_i$ which lies on the horizontal path $p$ of $\Delta$.
		\item[(3)] $\pi_i$ is an $(r_i,\ell_i)$-cell. In particular, as $\pi_i$ is an expanding cell,  $\ell_i\to r_i\in R$ and $|\ell_i|=2$.
	\end{enumerate}
	Note that $\Delta^+$ is equal to the composition $\Psi_1\circ \cdots \circ\Psi_{n-1}$.
	In order to enumerate the cells in the diagram $\Delta^{-}$ we consider the inverse diagram $(\Delta^-)\iv$ and enumerate its cells as above. In particular, there are atomic diagrams $\Phi_1,\cdots,\Phi_{n-1}$, such that $(\Delta^-)\iv=\Phi_1\circ\cdots\circ \Phi_{n-1}$ and such that the unique cell in each of the atomic diagrams $\Phi_i$ is an expanding cell.
\end{df}

Since diagram groups $DG(\mathcal P\mid a)$ where $\mathcal P$ is a tree rewriting system and $a\in \Sigma$  naturally embed into Thompson's group $F$, we will often refer to them as subgroups of $F$.
An important property of these subgroups is that there is a simple procedure for determining if a given diagram $\Delta$ in $F$ belongs to such  a given subgroup.

\begin{rk}\label{coloring}
	Let $\mathcal P=\la \Sigma \mid R\ra$ be a tree  rewriting system and let $a\in \Sigma$.
	Let $\Delta$ be a reduced diagram in $F$.
	Then $\Delta$ belongs to the diagram group $G=DG(\mathcal P,a)$ (viewed as a subgroup of $F$) if and only if there is a labeling of the edges of $\Delta$ where each edge is labeled by some letter from $\Sigma$, where $\topp(\Delta)$ and $\bott(\Delta)$ are labeled by $a$ and where every labeled cell is an $(\ell,r)$-cell or an $(r,\ell)$-cell, for some $\ell\to r\in R$.
\end{rk}

Let $\mathcal P=\la \Sigma \mid R\ra$ be a tree  rewriting system and let $a\in \Sigma$.
Let $\Delta$ be a reduced non-trivial diagram in $F$. It follows from Lemma \ref{horizontal} that $\Delta$ is the composition of two subdiagrams $\Delta=\Delta^+\circ \Delta^-$, where every cell in $\Delta^+$ is expanding and every cell in $\Delta^-$ is reducing (indeed, by Lemma \ref{Dunce}, Thompson's group $F$ is a diagram group over the tree rewriting system $\la x\mid x^2\to x\ra$). Let us enumerate the cells in $\Delta^+$ and in $\Delta^-$ from right to left according to the right-to-left order from Definition \ref{rl}. Let $\pi_1,\dots,\pi_n$ and $\pi_1',\dots,\pi_n'$ be the cells of $\Delta^+$ and the cells of $(\Delta^-)\iv$ respectively. To determine if there is a labeling of the edges of $\Delta$ which turns it into an $(a,a)$-diagram over $\mathcal P$, we label the edges of $\Delta^+$ and the edges of $(\Delta^-)\iv$ (viewed as separate diagrams) inductively.
\begin{enumerate}
	\item[(1)] We label $\topp(\pi_1)$ and $\topp(\pi_1')$ by $a$. If there is a relation $bc\to a\in R$ we label the left and right edges of $\bott(\pi_1)$ and $\bott(\pi_1')$ by $b$ and $c$ respectively. If there is no relation in $R$ with $a$ as its right hand side, then $\Delta$ cannot be labeled to become an $(a,a)$-diagram over $\mathcal P$.
	\item[(2)] Let $2\leq i\leq n$ and assume that for all $j<i$ the edges of $\pi_i$ and of $\pi_i'$ were already labeled. By the order the cells were enumerated, $\topp(\pi_i)$ and $\topp(\pi_i')$ were already labeled. If there is a relation in $R$ whose right hand side is the label of $\topp(\pi_i)$ (resp. $\topp(\pi_i')$), we label the bottom edges of $\pi_i$ (resp. $\pi_i'$) with accordance with the left hand side of that relation. If there is no relation whose right hand side is the label of $\topp(\pi_i)$ or no relation whose right hand side is the label of $\topp(\pi_i')$, then $\Delta$ cannot be labeled to become an $(a,a)$-diagram over $\mathcal P$.
\end{enumerate}
If the edges of all cells in $\Delta^+$ and $(\Delta^-)\iv$ are labeled this way and the labels of the edges on the bottom path $\bott(\Delta^+)$ (read from left to right) coincide with the labels of the edges of $\bott((\Delta^-)\iv)$ (read from left to right) then the labeling described turns $\Delta$ into an $(a,a)$-diagram over $\mathcal P$. Otherwise, $\Delta$ is not the image of any diagram from $DG(\mathcal P,a)$ under its natural embedding in Thompson's group $F$. Hence, the described procedure (for a finite tree rewriting system $\mathcal P$) determines if a reduced diagram $\Delta$ belongs to $DG(\mathcal P,a)$. In particular, we have the following.

\begin{theorem}\label{thm:closed}
	Let $\mathcal P=\la \Sigma\mid R\ra$ be a finite tree rewriting system and let $a\in \Sigma$. Then the membership problem in the diagram group $DG(\mathcal P,a)$, viewed as a subgroup of Thompson's group $F$, is decidable.
\end{theorem}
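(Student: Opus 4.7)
The plan is to formalize the algorithmic procedure sketched in the paragraphs preceding the theorem statement, showing that it runs in bounded time on any input and correctly decides membership.

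First, I would observe that given an arbitrary element of $F$, one can compute its reduced diagram representation $\Delta$ in linear time by the known normal-form algorithm for $F$ (using $\mathcal P=\la x\mid x^2\to x\ra$). By Remark \ref{coloring}, $\Delta$ lies in $DG(\mathcal P,a)$ (under the natural embedding) if and only if the edges of $\Delta$ admit a labeling by letters of $\Sigma$ such that $\topp(\Delta)$ and $\bott(\Delta)$ are labeled $a$ and every cell corresponds to a rule of $R$. So it suffices to decide whether such a labeling exists, and for this I would implement the step-by-step labeling procedure already outlined in the text.

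The key step is to invoke Lemma \ref{horizontal} and write $\Delta=\Delta^+\circ\Delta^-$, where $\Delta^+$ consists of expanding cells and $\Delta^-$ of reducing cells. I would then work with $\Delta^+$ and $(\Delta^-)\iv$ separately, enumerating their cells $\pi_1,\dots,\pi_n$ and $\pi_1',\dots,\pi_n'$ from right to left as in Definition \ref{rl}, label $\topp(\pi_1)=\topp(\pi_1')$ by $a$, and then inductively propagate labels downward: for each cell $\pi_i$ (respectively $\pi_i'$) whose top edge already carries a label $c\in\Sigma$, I consult $R$ for a rule of the form $bd\to c$. Because $\mathcal P$ is a tree rewriting system, clause (2) of the definition guarantees that at most one such rule exists, so if a rule exists, the labels of the two bottom edges of $\pi_i$ are uniquely determined; if no such rule exists, the labeling fails and the algorithm halts with output "not in $DG(\mathcal P,a)$." After the induction terminates, I compare the bottom-edge labels of $\Delta^+$ with those of $(\Delta^-)\iv$ read left to right; the diagram is in $DG(\mathcal P,a)$ if and only if these agree.

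Correctness follows from two points: (i) by tree-rewriting uniqueness the labeling procedure is forced at every step, so if any valid labeling exists the procedure will find it, and (ii) if the procedure succeeds, the resulting labeled diagram is by construction an $(a,a)$-diagram over $\mathcal P$ whose image under the natural embedding of Definition \ref{closed} equals $\Delta$. Finiteness of $R$ and $\Sigma$ ensures each labeling step is effective (simply a finite lookup), and the total work is bounded linearly in the number of cells of $\Delta$. I do not expect a genuine obstacle here; the only subtlety is ensuring the right-to-left enumeration from Definition \ref{rl} is applied consistently so that every cell's top path is already labeled at the moment it is processed, which is exactly what that definition guarantees.
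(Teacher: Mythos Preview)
Your proposal is correct and follows exactly the approach taken in the paper: the paragraphs preceding the theorem describe precisely this inductive labeling procedure on $\Delta^+$ and $(\Delta^-)^{-1}$ using the right-to-left enumeration, with the tree-rewriting uniqueness forcing each step, and the theorem is then stated as an immediate consequence. There is nothing to add.
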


\begin{rk}
	In Theorem \ref{thm:closed} (as well as the rest of the paper), we refer to diagram groups $DG(\mathcal P,a)$ over tree rewriting systems $\mathcal P=\la \Sigma\mid R\ra$, where $a\in\Sigma$. More generally, let $\mathcal P=\la \Sigma\mid R\ra$ be a  tree rewriting system and let $w$ be a finite word over $\Sigma$. It is possible to transform $\mathcal P$ using finitely many Tietze transformations, into a  tree-rewriting system $\mathcal P'=\la \Sigma'\mid R'\ra$, such that $\Sigma\subseteq \Sigma'$, $R\subseteq R'$ and such that there is a letter $a\in \Sigma'$ such that $w=a$ in the semigroup defined by $\mathcal P'$. It follows from \cite[Section 4]{GuSa3} that the diagram group $DG(\mathcal P,w)$ is isomorphic to $DG(\mathcal P',w)$ and by \cite[Theorem 7.1]{GuSa}, the diagram group $DG(\mathcal P',w)$ is isomorphic to $DG(\mathcal P',a)$ (which naturally embeds into Thompson's group $F$). Hence, $DG(\mathcal P,w)$ naturally embeds into Thompson's group $F$ as a closed subgroup. In addition, if $\mathcal P$ (and hence $\mathcal P'$) is finite, the membership problem in this subgroup is decidable.
\end{rk}

\begin{rk} By \cite[Theorem 26]{GubSa1} the derived subgroup $[F,F]$ of $F$ is the diagram group $DG(\mathcal P,a)$ over the following infinite tree rewriting system $$\mathcal P=\la x, a, a_i, b_i, i\ge 0\mid xx=x, a_0b_0=a,a_{i+1}x=a_i, xb_{i+1}=b_i, i=0,1,...\ra.$$
Moreover, using techniques from \cite{G}, one can prove that $[F,F]$ is not the natural embedding into $F$ of any diagram group over a finite tree rewriting system. 
\end{rk}

\subsection{The Stallings $2$-core of subgroups of $F$}\label{sec:core}

Definition \ref{closed}  for closed subgroup of $F$ is different from the original definition given in \cite{GS,G} (but the definitions are equivalent by \cite[Lemma 10.4]{G}). The original definition relied on the construction of the Stallings $2$-core of subgroups of $F$. In this section, we recall the construction and its relation to closed subgroups of $F$. 

Assume that $H$ is a subgroup of $F$. If $H$ is a closed subgroup, we are interested in finding a tree rewriting system $\mathcal P$ such that $H$ is a diagram group over $\mathcal P$. If $H$ is not closed, we are interested in finding the closure of $H$. Note that by Definition \ref{def:closed},
 a subgroup $H$ of $F$ is closed if and only if it contains any function $f\in F$ which is a piecewise-$H$ function.
Hence, the closure of any subgroup $H$ of $F$ is
the smallest closed subgroup of $F$ which contains $H$. 




\begin{df}[The core of a subgroup of $F$]\label{core}
	Let $H$ be a subgroup of $F$ and assume that $\{\Delta_i:i\in \mathcal I\}$ is a set of reduced diagrams generating $H$. We define the \emph{Stallings $2$-core} of $H$ (or the \emph{core} of $H$ for short) to be the directed $2$-complex constructed as follows.
	
	First, we identify all top and bottom edges of the diagrams $\Delta_i$, $i\in\mathcal I$ to a single edge $\rho$. (Here and below, whenever we identify directed edges we also identify their initial vertex to a single vertex and their terminal vertex to a single vertex). The result is a ``bouquet of spheres'' with a directed \textit{distinguished edge} $\rho$. 
	Note that every cell $\pi$ in the bouquet of spheres has two directed boundary components: one boundary component of length $1$ and the other of length $2$.
	We will refer to the boundary component of length $1$ as $\topp(\pi)$ and to the boundary component of length $2$ as $\bott(\pi)$. (Due to the shape of the bouquet of spheres, we cannot use the standard geometric interpretation of the top and bottom paths of a cell, as in Section \ref{sec:diagrams}).
	
	Now, whenever two cells $\pi_1,\pi_2$ in the bouquet of spheres share their top (resp. bottom) boundary component we \emph{fold} the two cells; i.e., we identify the cells  and in particular, we identify each  edge of  $\bott(\pi_1)$ with the respective edge of $\bott(\pi_2)$ (resp. identify $\topp(\pi_1)$ with $\topp(\pi_2)$). 
	
	We apply foldings as long as necessary, until there are no applicable foldings. The directed $2$-complex obtained 
	is the Stallings $2$-core of the subgroup $H$. It is denoted by $\mathcal C(H)$. The initial (resp. terminal) vertex of the distinguished edge $\rho$ is denoted $\iota$ (resp. $\tau$) and is referred to as the \emph{initial vertex} (resp. \emph{terminal vertex}) of the core.
\end{df}

For an example of the construction of the core of a subgroup of $F$, see \cite{GS}.
Note that the Stallings $2$-core of a subgroup $H$ of $F$ does not depend on the chosen generating set, nor on the order of foldings applied \cite{GS}. The following remark follows  from the construction of the core.

\begin{rk}\label{core structure}
	Let $H$ be a subgroup of $F$ and let $\mathcal C(H)$ be the Stallings $2$-core of $H$. Then the initial (resp. terminal) vertex of $\mathcal C(H)$ is the unique vertex of the core which has only outgoing (resp. incoming) edges. The distinguished edge $\rho$ is the unique directed edge from the initial vertex $\iota$ to the terminal vertex $\tau$. In addition, for every vertex $\nu$ in the core there is a directed path in the core from $\iota$ to $\nu$ and a directed path from $\nu$ to $\tau$.
\end{rk}

Now, let $H$ be a subgroup of $F$ and let $\mathcal C(H)$ be the Stallings $2$-core of $H$. The $2$-complex $\mathcal C(H)$ gives rise to a tree  rewriting system (called the \emph{core} rewriting system of the subgroup $H$):
$$\mathcal P=\la E:\bott(\pi)\to\topp(\pi), \pi \mbox{ is a cell in } \mathcal C(H)\ra,$$
where $E$ is the set of edges of $\mathcal C(H)$ (note that the foldings ensure that $\mathcal P$ is indeed a tree rewriting system).
Let $\rho$ be the distinguished edge of $\mathcal C(H)$ (which is also referred to as the \emph{distinguished letter} of the presentation $\pp$) and consider the diagram group $G=DG(\mathcal P,\rho)$ as a subgroup of Thompson's group $F$.
It is easy to see that the subgroup $G$ contains the subgroup $H$. Indeed, if $\mathcal C(H)$ was constructed using the generating set $\{\Delta_i\mid i\in\mathcal I\}$, then it follows from the construction that the edges of each of the diagrams $\Delta_i$ can be labeled to make $\Delta_i$ a $(\rho,\rho)$-diagram over $\mathcal C(H)$. Hence, the generating set of $H$ is contained in $G$. As $G$ is a group, $G$ contains the subgroup $H$.
It was proved in \cite{G} that $G$ is the subgroup of $F$ of all piecewise-$H$ functions. Hence, $G=Cl(H)$ is the smallest closed subgroup of $F$ which contains $H$.

\begin{rk}\label{fin_core}
	Let $H$ be a finitely generated subgroup of $F$. It follows from the construction of the core (starting with a finite generating set of $H$) that the core of $H$ is a finite directed $2$-complex. Note also that for every subgroup $H$ of $F$, the core of $H$ coincides with the core of $Cl(H)$ \cite{GS}.
\end{rk}

\section{The conjugacy problem}\label{sec:con}

In this section we construct a closed subgroup $H$ of $F$ with undecidable conjugacy problem. The closed subgroup $H$ is a diagram group over a finite tree-semigroup presentation. Hence, by Theorem  \ref{thm:closed}, the membership problem in $H$ is decidable. We will need the following lemma.
\begin{lemma} \label{l1} There exists a finite tree rewriting system defining a semigroup which is a group with undecidable word problem.
\end{lemma}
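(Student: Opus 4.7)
The plan is to encode a monoid presentation of a known finitely presented group with undecidable word problem into the restrictive form required by a tree rewriting system. I would begin with a Boone--Novikov group $G=\la x_1,\dots,x_n\mid r_1,\dots,r_m\ra$ and present it as a monoid on the alphabet $\Sigma_0=\{x_i^{\pm1}:1\le i\le n\}\cup\{e\}$, with the standard relations $x_ix_i^{-1}=x_i^{-1}x_i=e$, $ex=xe=x$ for every $x\in\Sigma_0$, $ee=e$, and $r_j=e$ for each defining relator. The resulting monoid is isomorphic to $G$ and hence a group with undecidable word problem.

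Next I would refine this monoid presentation into one whose rules all have the form $\ell\to r$ with $|\ell|=2$ and $|r|=1$. For every defining relation $u_1\cdots u_k=v_1\cdots v_\ell$ I would introduce fresh auxiliary letters and a chain of compressions $u_1u_2\to\alpha_1$, $\alpha_iu_{i+2}\to\alpha_{i+1}$ for $1\le i\le k-2$, together with the analogous chain compressing the right-hand side, arranging the two chains to terminate at a common designated letter. This is a routine Tietze-style transformation that preserves the defined monoid.

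The main obstacle is the stringent uniqueness conditions in the definition of a tree rewriting system: no two distinct rules may share a left-hand side or a right-hand side. The right-hand-side condition is the serious issue, because our preliminary presentation produces the letter $e$ as output of many rules (from inverse relations, identity relations, and each defining relator). My approach would be to introduce distinct copies $e_1,e_2,\dots$ of the identity, one as the terminal output of each compression chain, and then to add a few further length-$2$-to-$1$ rules so that, once the semigroup is shown to be a group, right cancellation forces all the $e_i$ to coincide. Analogous care is needed to avoid two compression chains producing rules with a common left-hand side: whenever a potential conflict arises (for instance if two distinct relators begin with the same pair of letters), I would split the offending letters into relator-specific copies and reconcile them by rules that become identifications after cancellation.

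Finally, I would verify that the constructed system still defines the group $G$, by exhibiting the obvious surjection onto $G$ sending each auxiliary letter to the product it encodes, and checking injectivity by tracking normal forms through the chain of compression rules. An alternative and perhaps cleaner route would be to invoke a general reduction of finite monoid presentations to finite tree rewriting systems, of the sort that may be available via \cite{CMR} or obtainable by direct manipulation of semigroup presentations; given such a reduction, the lemma is immediate from the existence of a finitely presented group with undecidable word problem. I expect the hard part of the proof to be the bookkeeping that maintains both uniqueness conditions simultaneously while not enlarging the semigroup beyond $G$.
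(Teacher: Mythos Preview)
Your proposal identifies the right high-level strategy—compress long relations into chains of length-$2$-to-$1$ rules using fresh auxiliary letters—but the specific route you take runs into a genuine obstruction that the paper's construction is designed to avoid.

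The difficulty is your treatment of the identity. Starting from a monoid presentation with an explicit letter $e$ and inverse letters $x_i^{-1}$, you necessarily have many relations (all of $x_ix_i^{-1}=e$, $x_i^{-1}x_i=e$, $ex=x$, $xe=x$, $ee=e$, plus one per relator) terminating at $e$. Your fix—introduce distinct copies $e_1,e_2,\dots$ and add tree rules so that ``right cancellation forces all the $e_i$ to coincide''—is circular: cancellation is available only once you know the semigroup is a group, which is precisely what you are trying to establish. Concretely, forcing $e_i=e_j$ via rules such as $e_i a\to c$ and $e_j a\to c$ re-creates a right-hand-side collision, while routing them to fresh targets $c_i,c_j$ merely defers the problem. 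You have not exhibited, and I do not see, a finite set of tree rules that simultaneously keeps all left- and right-hand sides distinct \emph{and} collapses all the $e_i$; this is the ``hard part'' you flag at the end, and it is not just bookkeeping.

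The paper sidesteps the issue entirely via \cite{CMR}, which you mention but do not use. The key idea there is to \emph{avoid an explicit identity letter}. One first pads the group presentation to a balanced one on generators $a_0,\dots,a_n$, the first relator being $E=a_0a_1^2a_2\cdots a_n$, a word containing every generator. Since $E=1$ in the group, its cyclic shifts $E_i^{(r)}$ (ending in $a_i$) let one replace each occurrence of $a_i^{-1}$ by the positive word $E_i^{(r)}a_i^{-1}$, which cancels to a positive word. Multiplying the $j$-th relator on both sides by suitable cyclic shifts yields a semigroup presentation with exactly one relation of the form $w_i=a_i$ for each $i=0,\dots,n$; by \cite[Proposition~2.1]{CMR} this semigroup is the original group. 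Now the right-hand sides $a_0,\dots,a_n$ are distinct by construction, and the flanking shifts are chosen so that the final two letters of the words $w_i$ are also pairwise distinct. Only then is your compression step applied, and at that point no collisions remain to be resolved. So the missing ingredient in your plan is the CMR trick of encoding inverses and the identity \emph{implicitly} through a single relator and its cyclic shifts; your compression step is essentially the paper's Step~3, but you lack a workable Step~2.
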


\begin{proof} The first two steps closely follow \cite{CMR}.
	
	{\bf Step 1.} Let $\mathcal G=\{a_1,...,a_m\mid u_1=1,...,u_n=1\}$ be a finite group presentation defining a known example of a group $G$ with undecidable word problem (see, e.g., \cite{Rot}). Then $m>2$ and $n>m$. Consider the balanced presentation
	 $$\mathcal G'=\{a_0,a_1,...,a_m,a_{m+1},\dots,a_n|a_0a_1^2a_2a_3...a_n=1,u_1=1,...,u_n=1\}$$
	Notice that $\mathcal G'$ is obtained from $\mathcal G$ by first adding the generators
	$a_{m+1},\dots,a_n$ to make the presentation balanced; and second, adding the generator
	 $a_0$ along with a relation defining $a_0$ to be the inverse of $a_1^2a_2\dots a_n$ (which does not affect the resulting group).
	 Let $G'$ be the group defined by the presentation $\mathcal G'$. The addition of the generators $a_{m+1},\dots,a_n$  makes $G'$ a free product of the group $G$ and a free group,  so the group $G'$ also has undecidable word problem.
	
	{\bf Step 2.} Let $E=a_0a_1^2a_2a_3...a_n$. Let $E_t=a_1a_2a_3...a_na_0a_1$ and for every $i=0,...,n$ let $E_i^{(l)}$ (resp. $E_i^{(r)}$) be a cyclic shift of $E$ starting (resp. ending) with $a_i$. Note that $E$, $E_t$, $E_i^{(l)}$, $E_i^{(r)}$ are equal to 1 in $G'$.
	Note also that for each $i$, we have $E_i^{(r)}a_i^{-1}=a_i^{-1}$ in the group $G'$.
	For each $j\in\{1,\dots,n\}$, replace every negative letter $a_i\iv$ in the word $u_j$, by the word $E_i^{(r)}a_i^{-1}$.
	 After cancellation, we get a positive word $u_j'$ because for each $i$,
	$E_i^{(r)}$ ends with $a_i$. 
	The presentation
	\begin{equation*}
	\begin{split}
\pp= \{a_0,...,a_n\mid E_ta_0=a_0, E_2^{(l)}u_1'a_1E_2^{(r)}=a_1,
E_3^{(l)}u_2'a_2E_3^{(r)}=a_2,\dots,\\
E_n^{(l)}u_{n-1}'a_{n-1}E_n^{(r)}=a_{n-1}
,E_1^{(l)}u_n'a_nE_1^{(r)}=a_n\}
	\end{split}
	\end{equation*}
	
 is then a semigroup presentation.
	
	By \cite[Proposition 2.1]{CMR} the semigroup defined by this semigroup presentation is  a group which, as is easy to see, is isomorphic to $G'$.
	
	{\bf Step 3.} Let $i=\{0,...,n\}$ and $v\equiv b_1\cdots b_k$, $k>2$ be a semigroup word over the alphabet $\{a_0,\dots,a_n\}$. Consider the following semigroup presentation
	\begin{equation*}
	\begin{split}
\pp(v,a_i)=\{a_0,...,a_n, t_1(v),...,t_{k-2}(v)\mid b_1t_1(v)=a_i, b_2t_2(v)=t_1(v),...,\\ b_{k-2}t_{k-2}(v)=t_{k-3}(v),
b_{k-1}b_k=t_{k-2}(v)\}.
	\end{split}
	\end{equation*}

	Note that $\pp(v,a_i)$ is a tree semigroup presentation. Note also that the semigroup given by $\pp(v,a_i)$  is isomorphic to the semigroup with presentation $\{ a_0,...,a_{n}\mid v=a_i\}$.
	 The isomorphism takes each $a_j$ to itself and each $t_j(v)$ to the element $b_{j+1}...b_k$.

	Now consider the presentation
	\begin{equation*}
	\begin{split}
\pp'=\pp(E_ta_0,a_0)\cup \pp(E_2^{(l)}u_1'a_1E_2^{(r)},a_1)\cup
\pp(E_3^{(l)}u_2'a_2E_3^{(r)},a_2)\cup
...\\	\cup
\pp(E_n^{(l)}u_{n-1}'a_{n-1}E_n^{(r)},a_{n-1})
\cup\pp(E_1^{(l)}u_n'a_nE_1^{(r)},a_n).
	\end{split}
	\end{equation*}  Here for two presentations $\pp_1=\{A_1\mid R_1\}$ and $\pp_2=\{A_2\mid R_2\}$ their union $\pp_1\cup \pp_2$ is defined as $\{A_1\cup A_2\mid B_1\cup B_2\}$.
	Note that in general the union of two tree semigroup presentations is not necessarily a tree semigroup presentation. But here,  $\pp'$ is a tree semigroup presentation, as no two relations in $\pp'$ share their left hand sides and no two relations in $\pp'$ share their right hand side.
	Clearly, the tree semigroup presentation $\pp'$ is a semigroup presentation defining the group $G'$. 
	Hence, the lemma is proved.

\end{proof}

{\em Proof of Theorem \ref{mainInt}}. Consider the tree presentation $\pp'$ from the proof of Lemma \ref{l1}. Let $u,v$ be two words over the generators of $\pp'$. Since the semigroup $G'$ defined by $\pp'$ is a group, there are positive words $p,q$ in the generators of $\pp'$ such that $a_0=pu=qv$ in $G'$. We can assume that $p,q$ end with $a_0$, otherwise we can multiply $p,q$ by $E$ (which is equal to the identity in $G'$) on the right. Now, let $p'$ and $q'$ be the prefixes of $p$ and $q$ respectively such that  $p\equiv p'a_0, q\equiv q'a_0$. 
Since $G'$ is a group, it has an identity element (the word $E$ from the proof of Lemma \ref{l1}). The identity element is an idempotent and it clearly divides $a_0$ (indeed, any two elements in the group divide each other). Hence, by Lemma \cite[Theorem 25]{GubSa1}, the diagram group $DG(\pp',a_0)$ contains a copy of Thompson's group $F$. In particular, the diagram group $DG(\pp',a_0)$ is non-trivial.  Let $\Delta$ be a reduced non-trivial $(a_0,a_0)$-diagram over $\mathcal P'$.
We construct two diagrams $\Delta(u)$ and $\Delta(v)$ as follows. We let
$$\Delta(u)=\Psi\circ (\epsilon(p')+\Delta+\epsilon(u))\circ \Psi^{-1},$$
where $\Psi$ is an $(a_0,p'a_0u)$-diagram (such a diagram exists because $a_0=p'a_0u$ in $G'$). Similarly, we let
$$\Delta(v)=\Phi\circ (\epsilon(q')+\Delta+\epsilon(v))\circ \Phi^{-1}$$
where $\Phi$ is an $(a_0,q'a_0v)$-diagram. 

We will need the following lemma.
\begin{lemma}\label{unde}
	The diagrams $\Delta(u)$ and $\Delta(v)$ are conjugate  in the group $DG(\pp',a_0)$ if and only if $u=v$ in $G'$.
\end{lemma}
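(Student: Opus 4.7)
The plan is to first recast the conjugacy of $\Delta(u)$ and $\Delta(v)$ in $DG(\pp',a_0)$ as an equivalent equation on the diagrams $D:=\epsilon(p')+\Delta+\epsilon(u)$ and $E:=\epsilon(q')+\Delta+\epsilon(v)$. Since $\Delta(u)=\Psi\circ D\circ\Psi^{-1}$ and $\Delta(v)=\Phi\circ E\circ\Phi^{-1}$, an element $\Xi\in DG(\pp',a_0)$ conjugates $\Delta(u)$ to $\Delta(v)$ if and only if the $(q'a_0v,p'a_0u)$-diagram $\Theta:=\Phi^{-1}\circ\Xi\circ\Psi$ satisfies $\Theta\circ D=E\circ\Theta$ (as diagrams modulo dipoles); conversely every such $\Theta$ yields a conjugator $\Xi=\Phi\circ\Theta\circ\Psi^{-1}\in DG(\pp',a_0)$. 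So the lemma reduces to showing that such a $\Theta$ exists if and only if $u=v$ in $G'$.

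For the ``if'' direction, assume $u=v$ in $G'$. Since $G'$ is a group and $p'a_0u=a_0=q'a_0v$ in $G'$, canceling $u=v$ and then $a_0$ gives $p'=q'$ in $G'$ as well. Hence there exist a $(q',p')$-diagram $\Sigma$ and a $(v,u)$-diagram $\Omega$ over $\pp'$. Setting $\Theta=\Sigma+\epsilon(a_0)+\Omega$, the interchange law between horizontal addition and vertical composition of diagrams gives
\[
\Theta\circ D \;=\; (\Sigma\circ\epsilon(p'))+(\epsilon(a_0)\circ\Delta)+(\Omega\circ\epsilon(u)) \;=\; \Sigma+\Delta+\Omega \;=\; E\circ\Theta,
\]
as required, and the corresponding conjugator is $\Xi=\Phi\circ\Theta\circ\Psi^{-1}$.

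For the converse, suppose $\Theta\circ D=E\circ\Theta$ for some $\Theta$. The aim is to extract a $(v,u)$-diagram over $\pp'$, which witnesses $u=v$ in $G'$. The strategy is to show that, up to equivalence, $\Theta$ decomposes as a horizontal sum $\Sigma+\Gamma+\Omega$ with $\Sigma:q'\to p'$, $\Gamma:a_0\to a_0$, and $\Omega:v\to u$ over $\pp'$. The crucial point is \emph{locality}: every non-trivial cell of $D$ and of $E$ sits on the ``middle $a_0$'' strand of $p'a_0u$ and of $q'a_0v$ respectively, so the equation $\Theta\circ D=E\circ\Theta$ should force the cells of $\Theta$ descending from the middle $a_0$ at the top of $\Theta$ to coincide with the cells ascending to the middle $a_0$ at the bottom. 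Writing $\Theta=\Theta^+\circ\Theta^-$ via the expanding-then-reducing normal form for reduced diagrams over the tree rewriting system $\pp'$ (in the spirit of Lemma~\ref{horizontal}), the middle $a_0$ at the top expands through $\Theta^+$ to a contiguous subword of the horizontal path of $\Theta$, and the middle $a_0$ at the bottom is the reduction through $\Theta^-$ of a contiguous subword; the non-triviality of $\Delta$ forces these two subwords to coincide, since otherwise a non-trivial $\Delta$-cell in the reduced form of $\Theta\circ D$ would sit outside the top middle-$a_0$ column, while in the reduced form of $E\circ\Theta$ every such cell sits inside it.

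The main obstacle will be making the column-alignment step fully rigorous, particularly handling cells of $\Theta$ that straddle the boundary between the $q'$-, $a_0$-, and $v$-regions at the top of $\Theta$ (and the analogous boundaries at the bottom). The intended remedy is to replace $\Theta$ by an equivalent diagram -- via dipole insertion/cancellation, or by multiplication on either side by elements of the centralizers of $D$ in $DG(\pp',p'a_0u)$ and of $E$ in $DG(\pp',q'a_0v)$ -- in which no cell crosses these boundaries, so that the decomposition $\Sigma+\Gamma+\Omega$ is transparent and the required $(v,u)$-diagram $\Omega$ can simply be read off.
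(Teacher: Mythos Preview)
Your ``if'' direction is fine and matches the paper's implicit argument. The ``only if'' direction, however, has a genuine gap that you yourself flag as ``the main obstacle'' and do not close.

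The difficulty is this: there is no a priori reason for the conjugating diagram $\Theta$ (or any dipole-equivalent modification of it) to split as a horizontal sum $\Sigma+\Gamma+\Omega$. Your column-alignment heuristic --- that a nontrivial $\Delta$-cell would end up ``outside the middle column'' on one side but ``inside'' on the other --- presumes you can track the position of cells through the reduction of $\Theta\circ D$ and $E\circ\Theta$. But dipole cancellation can shuffle cells across the putative column boundaries in complicated ways, and your proposed remedies (inserting dipoles, multiplying by centralizer elements) are not shown to produce a $\Theta$ with no boundary-crossing cells. Making this rigorous would amount to reproving, in an ad hoc way, a substantial piece of the conjugacy theory for diagram groups.

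The paper avoids this entirely by invoking that theory directly. It first observes that conjugacy in $DG(\pp',a_0)$ is the same as conjugacy of diagrams (allowing non-spherical conjugators), then replaces $\Delta$ by an \emph{absolutely reduced} conjugate $\hat\Delta$, so that $\epsilon(p')+\hat\Delta+\epsilon(u)$ and $\epsilon(q')+\hat\Delta+\epsilon(v)$ are themselves absolutely reduced. The key input is then \cite[Lemma~15.15]{GuSa}: two absolutely reduced spherical diagrams are conjugate if and only if they have the same number of components and corresponding components are conjugate. Since the nontrivial components of the two sums are identical (the components of $\hat\Delta$) and the outer trivial components are $\epsilon(p'),\epsilon(u)$ versus $\epsilon(q'),\epsilon(v)$ (possibly merged with trivial end-components of $\hat\Delta$), conjugacy reduces to $p'=q'$ and $u=v$ in $G'$, which are equivalent since $G'$ is a group. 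This component decomposition is precisely the structural fact your argument is groping towards; citing it is what makes the ``only if'' direction go through.
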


\begin{proof}
	The proof is similar to the proof in \cite[Example 15.22]{GuSa}.
	We recall that in \cite{GuSa} two spherical diagrams $\Delta_1$, $\Delta_2$ over the same semigroup presentation $\mathcal P$ were said to be \emph{conjugate diagrams} if there exists a diagram $\eta$ over $\pp$ such that
	$$\Delta_2=\eta^{-1}\circ \Delta_1\circ \eta.$$
	Note that the diagram 
	 $\eta$ in this definition does not have to be a spherical diagram. 
	
	Here, $\Delta(u)$ and $\Delta(v)$ are both $(a_0,a_0)$-diagrams. Thus, if there exists a diagram $\eta$ over $\pp'$ such that $\eta^{-1}\circ \Delta(u)\circ \eta=\Delta(v)$, then $\eta$ must be an $(a_0,a_0)$-diagram. Hence, the diagrams $\Delta(u)$ and $\Delta(v)$ are conjugate as elements of $DG(\pp',a_0)$ if and only if they are conjugate diagrams over $\pp'$. Since the diagram $\Delta(u)$ is a conjugate of the diagram $\epsilon(p')+\Delta+\epsilon(u)$ and the diagram $\Delta(v)$ is a conjugate of the diagram $\epsilon(q')+\Delta+\epsilon(v)$, the diagrams $\Delta(u)$ and $\Delta(v)$ are conjugate if and only if $\epsilon(p')+\Delta+\epsilon(v)$ and $\epsilon(q')+\Delta+\epsilon(v)$ are conjugate as diagrams over $\pp'$.

	Following \cite[Section 15]{GuSa} we say that a  diagram $\eta$ over a semigroup presentation $\mathcal P$ is
	\emph{trivial} if there are no cells in $\eta$ (i.e., if $\eta$ is of the form $\epsilon(w)$ for some word $w$ over $\pp$). We say that $\eta$ is
	\emph{simple} if $\topp(\eta)$ and $\bott(\eta)$ do not have common points, apart from $\iota(\eta)$ and $\tau(\eta)$.
	If $\eta$ is a spherical diagram, we say that it is \emph{absolutely reduced} if for any $n\in\mathbb{N}$, the diagram $\eta^n$, which is the concatenation (without reduction) of $n$ copies of $\eta$ is reduced.

	In \cite{GuSa}, it was noted that if $\eta$ is a spherical diagram, then $\eta$ can be uniquely decomposed into a sum of spherical diagrams $\eta_1+\eta_2+\cdots+\eta_m$ where each summand is either trivial or indecomposable into a sum of spherical diagrams and such that no two consecutive summands are trivials. The summands $\eta_1,\dots,\eta_m$ are called the \emph{components} of $\eta$. 
	By \cite[Lemma 15.14]{GuSa} for every spherical diagram $\eta$, one can effectively find an absolutely reduced 
	spherical diagram $\hat{\eta}$ conjugate to $\eta$.
	
	Now, consider the  $(a_0,a_0)$-diagram $\Delta$. Let $\hat{\Delta}$ be an absolutely reduced 
	spherical diagram conjugate to $\Delta$.
	Clearly, the diagram $\epsilon(p')+\Delta+\epsilon(u)$ is conjugate to the diagram $\epsilon(p')+\hat{\Delta}+\epsilon(u)$. Similarly,
	$\epsilon(q')+\Delta+\epsilon(v)$ is conjugate to the diagram $\epsilon(q')+\hat{\Delta}+\epsilon(v)$. Hence, $\Delta(u)$ and $\Delta(v)$ are conjugate if and only if  $\epsilon(p')+\hat{\Delta}+\epsilon(u)$ and $\epsilon(q')+\hat{\Delta}+\epsilon(v)$ are conjugate.
	
	Let $\hat{\Delta}_1,\dots,\hat{\Delta}_m$, $m\geq 1$ be the components of $\hat{\Delta}$ so that
	$$\hat{\Delta}=\hat{\Delta}_1+\cdots+\hat{\Delta}_m$$
	and note that
	$$(1)\  \epsilon(p')+\hat{\Delta}+\epsilon(u)=\epsilon(p')+\hat{\Delta}_1+\cdots+\hat{\Delta}_m+\epsilon(u),$$
	$$(2)\ \epsilon(q')+\hat{\Delta}+\epsilon(v)=\epsilon(q')+\hat{\Delta}_1+\cdots+\hat{\Delta}_m+\epsilon(v).$$
	
		By \cite[Lemma 15.15]{GuSa}, two absolutely reduced diagrams are conjugate if and only if they have the same number of components and each component of the first diagram (counted from left to right) is conjugate to the corresponding component of the second diagram.
		
		We consider two cases.
		
	Case 1: The diagrams  $\hat{\Delta}_1$ and $\hat{\Delta}_m$ are not trivial.
	
	In that case, the $m+2$ summands on the right hand sides of (1) and (2) are the components of the diagrams $\epsilon(p')+\hat{\Delta}+\epsilon(u)$ and $\epsilon(p')+\hat{\Delta}+\epsilon(u)$, respectively.
	 Hence, the diagrams are conjugate if and only if $\epsilon(p')$ is conjugate to $\epsilon(q')$ and $\epsilon(u)$ is conjugate to $\epsilon(v)$.
	
	Note that $\epsilon(p')$ is conjugate to $\epsilon(q')$ if and only if $p'=q'$ in the group $G'$. Indeed, if $p'=q'$ in $G'$ then there exists a $(p',q')$-diagram $\xi$ over $\pp'$. Then $\xi^{-1}\circ \epsilon(p')\circ\xi=\epsilon(q')$. In the other direction, if $\epsilon(p')$ is conjugate to $\epsilon(q')$ then the conjugating diagram must be a $(p',q')$-diagram over $\pp'$ which implies that $p'=q'$ in $G'$.
	Similarly, $\epsilon(u)$ is conjugate to $\epsilon(v)$ if and only if $u=v$ in $G'$.
	Note also that since $G'$ is a group and $p'a_0u=a_0$ and $q'a_0v=a_0$, the equality $u=v$ holds in $G'$ if and only if $p'=q'$ in $G'$.
	
	Hence, in the case where $\hat{\Delta}_1$ and $\hat{\Delta}_m$ are not trivial, we got that $\Delta(u)$ and $\Delta(v)$ are conjugate if and only if $u=v$ in $G'$.
	
	Case 2: At least  one of the diagrams $\hat{\Delta}_1$ and  $\hat{\Delta}_m$ is trivial.
	
	We assume here that both diagrams  $\hat{\Delta}_1$  and  $\hat{\Delta}_m$ are trivial, the other cases being similar.
	Let $w_1,w_m$ be words such that $\hat{\Delta}_1=\epsilon(w_1)$ and $\hat{\Delta}_m=\epsilon(w_m)$ and note that we must have $m\geq 3$, since otherwise $\hat{\Delta}$ is trivial but conjugate to the reduced non-trivial diagram $\Delta$.
	 Note also that each of the
	 diagrams $\epsilon(p')+\Delta+\epsilon(u)$ and $\epsilon(q')+\Delta+\epsilon(v)$  divides into $m$ components as follows
	   	$$\epsilon(p')+\Delta+\epsilon(u)=\epsilon(p'w_1)+\hat{\Delta}_2\cdots+\hat{\Delta}_{m-1}+\epsilon(w_mu),$$
	   	$$\epsilon(q')+\Delta+\epsilon(v)=\epsilon(q'w_1)+\hat{\Delta}_2\cdots+\hat{\Delta}_{m-1}+\epsilon(w_mv),$$
	   	As above, these diagrams are conjugate if and only if $p'w_1=q'w_1$ in $G'$ and $w_mu=w_mv$ in $G'$. Since $G'$ is a group, these equalities hold in $G'$ if and only if $u=v$ in $G'$.
	   	Hence, in this case we also have that $\Delta(u)$ and $\Delta(v)$ are conjugate if and only if $u=v$ in $G'$.
\end{proof}
Since the word problem in $G'$ is undecidable, it follows from Lemma \ref{unde} that the conjugacy problem in $DG(\pp',a_0)$ is undecidable as well. Since $\pp'$ is a finite tree presentation, the diagram group $DG(\pp',a_0)$ is a closed subgroup of $F$ (recall that $a_0$ is a letter in the alphabet of $\pp'$). Since $\pp'$ is a finite tree-presentation, by Theorem \ref{thm:closed}, the membership problem in this subgroup is decidable.
$\Box$

\section{The Squier complex of semi-complete rewriting systems}\label{sec:semi}

In \cite{GuSa}, Guba and the second author give an algorithm for finding a minimal generating set of the fundamental group of a Squier complex of a complete rewriting system (with respect to any chosen base word).
In this section, we consider more general rewriting systems, which we call ``semi-complete'' rewriting systems (see Definition \ref{semi} below). These rewriting systems are terminating and some of their elements are confluent (i.e., are equivalent to a unique reduced word.) This section follows \cite[Section 9]{GuSa} (with small modifications) to describe a generating set of the fundamental group of  the Squier complex of a semi-complete rewriting system (with respect to a specific base word).




Let $\mathcal P=\la\Sigma\mid R\ra$ be a string rewriting system. Let $\rho$ be a letter in $\Sigma$ and assume that some well order on $\Sigma$ is fixed such that $\rho$ is the least element in $\Sigma$. When we write $u<v$ for two words $u,v$ over the alphabet $\Sigma$, we mean that $u$ is smaller than $v$ in the short-lexicographic order induced on $\Sigma^*$ (i.e., on the set of all finite words over $\Sigma$) by the fixed well order on $\Sigma$. We also make the assumption that for every relation $\ell\to r\in\Sigma$ we have $r<\ell$.

A \emph{derivation} over $\mathcal P$ is a path $p$ 
 on the graph $K(\mathcal P)$. If the path $p$ has initial vertex $u$ and terminal vertex $v$, we say that $p$ is a  derivation  from $u$ to $v$. A \emph{positive derivation} is a derivation which consists entirely of positive edges. That is, a positive derivation from $u$ to $v$ corresponds to an application of a sequence of rewriting rules to the word $u$, such that the word $v$ is obtained at the end of the sequence. 
A word $u$ over $\Sigma$ is said to be \emph{reduced} (with respect to $\mathcal P$) if no rewriting rule of $\mathcal P$ applies to $u$, i.e., if there is no non-trivial positive derivation $p$ starting from the vertex $u$ of $K(\mathcal P)$.
Two words $u,v$ over $\Sigma$ are said to be \emph{equivalent over $\mathcal P$} if there is a derivation from $u$ to $v$ (i.e., if $u=v$ in the semigroup defined by the presentation $\mathcal P$). If $u$ and $v$ are equivalent over $\mathcal P$, we write $u\sim v$. 

We note that for every non-reduced word $u$ there is a positive derivation starting from $u$. Since by assumption, every rewriting rule reduces the ShortLex order of the word it is applied to, every positive derivation over $\mathcal P$ is finite. In other words, $\mathcal P$ is a \textit{terminating} rewriting system.


Often there is more than one positive derivation starting from a word $u$ over $\Sigma$.  Following \cite{GuSa}, we will be interested in two special types of positive derivations.

\begin{df}(Principal left edges)\label{df:principal}
	Let $\mathcal P=\la \Sigma\mid R\ra$ be a rewriting system. Let $e=(u,\ell\to r,v)$ be a positive edge of the Squier complex $K(\mathcal P)$ (that is $\ell\to r\in R$). The edge $e$ is a \emph{principal left edge}\footnote{Note that our definition of  principal left edges differs slightly from the one in \cite{GuSa}. We call a (positive) edge a  principal left edge if and only if its inverse (negative) edge is a principal left  edge in the terminology of \cite{GuSa}.} if the following conditions hold.
	\begin{enumerate}
		\item[(1)] Every proper prefix of $u\ell$ is reduced over $\mathcal P$.
		\item[(2)] $\ell$ is the biggest suffix of $u\ell$ which is the left hand side of some relation in $R$.
		\item[(3)] If $\ell\to r'\in R$, then $r\leq r'$ in the ShortLex order. 
	\end{enumerate}
	A positive derivation $p=e_1\dots e_n$ over $\mathcal P$ is a \emph{left derivation} if for each $i$, the edge $e_i$ is a principal left edge.
	Principal right edges and right derivations are defined in complete analogy.
\end{df}

The following observation regarding  principal left edges is quite useful.

\begin{rk}(The stability property)
	For any $v,v'$ the edge $(u,\ell\to r,v)$ is a  principal left edge if and only if the edge $(u,\ell\to r,v')$ is a  principal left edge
\end{rk}

An analogous stability property holds for  principal right edges.

Note that every non-reduced word $u$ over $\Sigma$, viewed as a vertex of $K(\mathcal P)$, has a unique outgoing principal left (resp. right)  edge. 
Hence, if $u$ is a non-reduced word over $\Sigma$ and $p$ and $q$ are left (resp. right) derivations starting from $u$, then either $p$ is an initial subpath of $q$ or $q$ is an initial subpath of $p$. Let $s$ be a left (resp. right) derivation starting from $u$ and terminating in a reduced word (such a  finite derivation must exist since $\mathcal P$ is terminating). Then $s$ is the longest left (resp. right) derivation starting from $u$ and every other left (resp. right) derivation starting from $u$ must be a prefix of $s$. In particular, there is a unique reduced word over $\mathcal P$ which one can get to via a left (resp. right) derivation starting from $u$. We denote this reduced word by $\bar{u}^\ell$ (resp. by $\bar{u}^r$).
Given two words $u$ and $v$ over $\Sigma$ we write  $v<_\ell u$ (resp. $v<_r u$) if there is a positive left (resp. right) derivation from $u$ to $v$. Note that the relations $<_\ell$ and $<_r$ are transitive. By the stability property, the relation $<_\ell$ is right-invariant (i.e. if $v<_\ell u$ then for every word $w$ over $\Sigma$ we have $vw<_\ell uw$) and the relation $<_r$ is left-invariant. In addition, if $v<_\ell u$ then $\bar{u}^\ell\equiv \bar{v}^\ell$. Similarly, if $v<_r u$ then $\bar{u}^r\equiv \bar{v}^r$.

Let $u,v$ be words in the alphabet $\Sigma$. If $v<_\ell u$ then there is a unique left derivation from $u$ to $v$. We denote this derivation by $d^\ell(u,v)$.  Similarly, if $v<_r u$ we let $d^r(u,v)$ be the unique right derivation from $u$ to $v$. Given an edge $e=(u,\ell\to r,v)$ of $K(\mathcal P)$ and a word $w\in \Sigma^*$ we let $w*e=(wu,\ell\to r,v)$ and $e*w=(u,\ell\to r,vw)$.
If $p=e_1\cdots e_n$ is a path in $K(\mathcal P)$ from $u$ to $v$ we let $w*p=(w*e_1)\cdots(w*e_n)$. Note that $w*p$ is a path in $K(\mathcal P)$ from $wu$ to $wv$. The path $p*w$ is defined in a similar way. Clearly, if $v<_\ell u$ then for any word $w$, the derivation $d^\ell(u,v)*w$ coincides with the left derivation $d^{\ell}(uw,vw)$ from $uw$ to $vw$. A similar observation holds for right derivations.

In this section, we will be interested in rewriting systems where  certain words have a unique equivalent reduced word. Note that if a word $u\in\Sigma^*$ has a unique equivalent reduced word  then $\bar{u}^\ell=\bar{u}^r$. In which case, we will write $\bar{u}$ instead of $\bar{u}^\ell$ and $\bar{u}^r$. Recall that if $u$ and $v$ are words over $\Sigma$, we say that $u$ is a \emph{left divisor} (resp. \emph{right divisor}) of $v$ if there is a word $w\in\Sigma^*$ such that $uw\sim v$ (resp. $wu\sim v$).

\begin{df}\label{semi}
	Let $\mathcal P=\la \Sigma\mid R \ra$ be a rewriting system. Assume that a well order is fixed on $\Sigma$ such that for every rewriting rule in $R$ the right hand-side is smaller than the left hand-side in the ShortLex order. Let $\rho\in\Sigma$ be the least letter in the well order.
	We say that the rewriting system $\mathcal P$ is \emph{semi-complete} (with respect to that fixed well order) if for every 
	 left or right divisor $u$ of $\rho$ there is a unique reduced word over $\mathcal P$ equivalent to $u$.
\end{df}

For the remainder of this section, we assume that the presentation $\mathcal P$ is a semi-complete rewriting system (with respect to the well-order fixed on $\Sigma$ at the beginning of the section).




Let $T$ be the set of all principal left edges in $K(\mathcal P)$. Since every vertex in $K(\mathcal P)$ has at most one outgoing edge from $T$, there are no simple cycles in $K(\mathcal P)$ which consist entirely of edges from $T\cup T^{-1}$. Indeed, assume by contradiction that  $c=e_1^{\epsilon_1}\cdots e_n^{\epsilon^n}$ is a simple cycle in $K(\mathcal P)$, where $e_1,\dots,e_n\in T$ and $\epsilon_1,\dots,\epsilon_n\in\{-1,1\}$. If $\epsilon_1,\dots,\epsilon_n$ are all positive, then $c$ is a ``cyclic'' derivation, which contradicts the fact that $\mathcal P$ is terminating. If $\epsilon_1,\dots,\epsilon_n$ are all negative, we get a contradiction by considering $c^{-1}$. Hence, by applying a cyclic shift to $c$ if necessary, we can assume that $\epsilon_1=-1$ and $\epsilon_{2}=1$. Then the vertex $(e_1^{\epsilon_1})_+=(e_{2}^{\epsilon_{2}})_-$ has two outgoing  principal left edges; a contradiction.

Let $\mathcal C$ be the connected component of $\rho$ in the graph $K(\mathcal P)$. If $u$ is a vertex of $\mathcal C$ then $u\sim \rho$ over $\mathcal P$ and in particular, $u$ is both a left and a right divisor of $\rho$. Hence, $u$ has a unique word equivalent to it over $\mathcal P'$, which must be $\rho$. As such, any positive derivation which starts from $u$ and terminates at a reduced word, will terminate at $\rho$ and in particular, such a positive derivation exists.

Let $\mathcal T_\mathcal{C}$ be the subgraph of $K(\mathcal P)$ whose vertex set is the vertex set of $\mathcal C$ and whose edge set is the set of edges in $T$ whose end vertices belong to $\mathcal C$. We claim that the graph $\mathcal T_{\mathcal C}$ (viewed as an undirected graph) is a spanning tree of the connected component $\mathcal C$. Indeed, since there are no simple cycles which consist of edges from $T\cup T^{-1}$ the graph $\mathcal T_{\mathcal C}$ is cycle-free. Since for every vertex $u$ in $\mathcal C$ there is a left derivation from $u$ to $\rho$, all vertices in $\mathcal C$ are connected to $\rho$ in the subgraph $\mathcal T_{\mathcal C}$. Hence, $\mathcal T_\mathcal{C}$ is a spanning tree of $\mathcal C$.

We are interested in finding a generating set of the fundamental group $\pi_1(K(\mathcal P),\rho)$ (or equivalently, of $\pi_1(\mathcal C,\rho)$). 
 Recall that if $u$ is a vertex of $\mathcal C$ we denote by $d^\ell(u,\rho)$ the left derivation from $u$ to $\rho$ (which is the unique simple path on the tree $\mathcal T_\mathcal C$ from $u$ to $\rho$).


\begin{df}
	Let $e$ be an edge in the connected component $\mathcal C$. Let $u=e_-$ and $v=e_+$. The path
	$(d^\ell(u,\rho))\iv e(d^\ell(v,\rho))$ is a path from $\rho$ to itself, and as such, it is an element of $\pi_1(\mathcal C,\rho)$. We will denote the equivalence class of this path by $[e]$.	If $e_1\cdots e_n$ is a path in the connected component $\mathcal C$ then we let $[e_1\cdots e_n]=[e_1]\cdots[e_n]$. Note that this is the equivalence class of the path $(d^\ell((e_1)_-,\rho))\iv\cdot  e_1\cdots e_n \cdot d^\ell((e_n)_+,\rho)$ in $\pi_1(\mathcal C,\rho)$.
\end{df}

Since $\mathcal T_\mathcal{C}$ is a spanning tree of $\mathcal C$, the standard construction of the fundamental group of a $2$-complex shows that the set of all equivalence classes $[e]$, for positive edges $e$ in $\mathcal C$ which are not principal left edges, is a generating set of $\pi_1(\mathcal C,\rho)$. Moreover, for every principal left edge $e$ in $\mathcal C$, the element $[e]$ is the trivial element of $\pi_1(\mathcal C,\rho)$. We follow Guba and  the second author's method from \cite{GuSa} for constructing a minimal generating set for $\pi_1(\mathcal C,\rho)$. In \cite{GuSa} the method is presented for complete rewriting systems, but it works for semi-complete rewriting systems as well.

Note that if $e=(u,\ell\to r,v)$ is an edge in the connected component $\mathcal C$ then $u\ell v\sim \rho$. Hence, $u$ and $v$ are left and right divisors of $\rho$, respectively. In that case, there are unique reduced words $\bar{u}$ and $\bar{v}$ equivalent to $u$ and $v$ over $\mathcal P$, respectively. Given a non-reduced word $u$, we denote by $e^\ell_u$ (resp. $e^r_u$) the unique outgoing principal left (resp. right) edge of $u$ (note that this edge is the first edge in any left (resp. right) derivation starting from $u$). 
The following lemma (for the case where $\mathcal P$ is complete) follows from the proof of \cite[Theorem 9.5]{GuSa}. We give a proof for completeness.

\begin{lemma}\label{red_edges}
	Let $e=(u,\ell\to r,v)$ be a positive edge in the connected component $\mathcal C$. Then the following assertions hold.
	\begin{enumerate}
		\item[(1)]  $[(u,\ell\to r,v)]=[(\bar{u},\ell\to r,v)]$
		\item[(2)] If $v$ is not reduced, let $e^r_v$ be the unique outgoing principal right edge of $v$.  Then
		$$[(u,\ell\to r,v)]=[(u,\ell\to r,(e^r_v)_+)]^{[((u\ell)*e^r_{v})\iv]}$$
	\end{enumerate}
\end{lemma}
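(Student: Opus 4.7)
The plan is to read both identities off the Squier complex $K(\pp)$ by constructing commutative rectangles whose faces are 2-cells (encoding independent applications of rewriting rules), and then to use the stability property to discard edges that lie in the spanning tree $\mathcal T_{\mathcal C}$.

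For part (1), let $p=e_1\cdots e_n$ be the left derivation from $u$ to $\bar u$ guaranteed by semi-completeness, and denote the intermediate vertices by $w_0\equiv u, w_1,\ldots, w_n\equiv \bar u$, so that $e_i$ goes from $w_{i-1}$ to $w_i$. At each step $i$ the rule applied by $e_i$ acts inside $w_{i-1}$ and is therefore disjoint from the occurrence of $\ell$ immediately to its right in $w_{i-1}\ell v$, so there is a 2-cell of $K(\pp)$ giving the homotopy
\[
e_i*(\ell v)\cdot (w_i,\ell\to r,v)\ \simeq\ (w_{i-1},\ell\to r,v)\cdot e_i*(rv).
\]
Concatenating these 2-cells for $i=1,\ldots,n$ produces a homotopy in $K(\pp)$ from $(u,\ell\to r,v)\cdot (p*(rv))$ to $(p*(\ell v))\cdot (\bar u,\ell\to r,v)$. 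In $\pi_1(\mathcal C,\rho)$ this reads
\[
[(u,\ell\to r,v)]\cdot [p*(rv)] = [p*(\ell v)] \cdot [(\bar u,\ell\to r,v)].
\]
By the stability property, every edge of $p*(\ell v)$ and of $p*(rv)$ is again a principal left edge and so lies in $\mathcal T_{\mathcal C}$; hence $[p*(\ell v)]$ and $[p*(rv)]$ are trivial, which yields assertion (1).

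For part (2), write $e^r_v=(v_1,\ell'\to r',v_2)$, so that $v\equiv v_1\ell'v_2$ and $(e^r_v)_+\equiv v_1r'v_2$. The rewrites $\ell\to r$ and $\ell'\to r'$ act on disjoint positions of $u\ell v_1\ell'v_2$, producing a 2-cell of $K(\pp)$ whose boundary is
\[
(u,\ell\to r,v)\cdot ((ur)*e^r_v)\cdot (u,\ell\to r,(e^r_v)_+)\iv\cdot ((u\ell)*e^r_v)\iv.
\]
Passing to $\pi_1(\mathcal C,\rho)$ gives
\[
[(u,\ell\to r,v)] = [(u\ell)*e^r_v]\cdot [(u,\ell\to r,(e^r_v)_+)]\cdot [(ur)*e^r_v]\iv,
\]
so the statement reduces to $[(ur)*e^r_v]=[(u\ell)*e^r_v]$. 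These two edges apply the same rule $\ell'\to r'$ and differ only in their left contexts $urv_1$ and $u\ell v_1$, both of which are left divisors of $\rho$ (since $urv, u\ell v\in\mathcal C$) and are equivalent over $\pp$ via $\ell\to r$; by semi-completeness they therefore share a common reduced form, and part (1) identifies both $[(ur)*e^r_v]$ and $[(u\ell)*e^r_v]$ with the equivalence class of $(\overline{u\ell v_1},\ell'\to r',v_2)$.

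The main point requiring care is the appeal to semi-completeness in part (2) to ensure that $urv_1$ and $u\ell v_1$ collapse to a common reduced word; everything else is a routine use of the stability property and the standard dictionary between boundaries of paths in $K(\pp)$ and relations in $\pi_1(\mathcal C,\rho)$.
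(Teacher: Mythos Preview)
Your proof is correct and follows essentially the same approach as the paper. In part~(1) you concatenate all the rectangles along the full left derivation $u\to\bar u$ in one pass, whereas the paper argues one principal left edge at a time (an induction), but the underlying mechanism---the $2$-cell identity plus stability of principal left edges---is identical; part~(2) is the same argument as the paper's, including the appeal to part~(1) via $\overline{u\ell v_1}=\overline{urv_1}$.
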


\begin{proof}
	
	(1) If $u$ is reduced, we are done. Hence, assume that $u$ is not-reduced and let $e^\ell_u$ be the unique outgoing principal left edge of $u$. 
	 Let $w$ be the end vertex of $e^\ell_u$ (so that $w<_\ell u$).
	 It suffices to prove that
	 $[(u,\ell\to r,v)]=[(w,\ell\to r,v)]$.
	Let $e^\ell_u=(p,\ell'\to r',q)$ be the  principal left edge from $u$ to $w$, so that $u\equiv p\ell' q$ and $w\equiv pr'q$.
	Note that by the definition of $2$-cells in $K(\mathcal P)$, the following relation holds in $\pi_1(K(\mathcal P),\rho)$.
	$$[(p\ell'q,\ell\to r,v)][(p,\ell'\to r', qr v)]=[(p,\ell'\to r',q\ell v)][(pr 'q,\ell\to r, v)].$$
	Since, $(p,\ell'\to r', q\ell v)$ and $(p,\ell'\to r',qr v)$ are  principal left edges (by the stability property), the corresponding equivalence classes are trivial. Hence
	$$[(p\ell'q,\ell\to r,v)]=[(pr 'q,\ell\to r, v)].$$
	or in other words
	$$[(u,\ell\to r,v)]=[(w,\ell\to r,v)]$$  as required.

	(2) Let $e^r_v=(p,\ell'\to,r',q)$ such that $\ell'\to r'\in R$, and $p\ell'q\equiv v$. By the construction of $2$-cells in $K(\mathcal P)$ we have
		$$[(u,\ell\to r,p\ell'q)][(ur p,\ell'\to r', q)]=[(u\ell p,\ell'\to r',q)][(u,\ell\to r, pr'q)].$$
	By part (1), we have $[(ur p,\ell'\to r', q)]=[(u\ell p,\ell'\to r',q)]$ since $\overline{ur p}=\overline{u\ell p}$. Note also that $(u\ell p,\ell'\to r', q)=(u\ell)*e_r^v$. In addition, $pr'q\equiv (e^r_v)_+$ and $p\ell'q\equiv v$. Hence,
	$$[(u,\ell\to r,v)][(u\ell)*e_r^v]=[(u\ell)*e_r^v][(u,\ell\to r, (e^r_v)_+)].$$
	Clearly, the result follows.  
\end{proof}

As a corollary of Lemma \ref{red_edges} we have the following,

\begin{cy}\label{red}
	Let $e=(u,\ell\to r,v)$ be a positive edge in the connected component $\mathcal C$. Let $d^r(v,\bar{v})$ be the right derivation from $v$ to $\bar{v}$. Then
	$$[(u,\ell\to r,v)]=[(\bar{u},\ell\to r,\bar{v})]^{[((u\ell)*d^r(v,\bar{v}))\iv]}$$
\end{cy}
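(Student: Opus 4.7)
The plan is to iterate Lemma~\ref{red_edges}(2) along the right derivation $d^r(v,\bar v)$ to reduce the second-coordinate word from $v$ all the way to $\bar v$, and then apply Lemma~\ref{red_edges}(1) once to reduce $u$ to $\bar u$. The main bookkeeping task is to verify that the resulting product of conjugating elements assembles into the single expression $[((u\ell)*d^r(v,\bar v))^{-1}]$.

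Concretely, decompose the right derivation as $d^r(v,\bar v) = f_1 f_2 \cdots f_n$, where each $f_i$ is a principal right edge, $f_1 = e^r_v$, and for each $i\ge 2$ we have $(f_{i-1})_+ = (f_i)_-$ and $f_i = e^r_{(f_{i-1})_-}$. I will prove by induction on $k \in \{0,1,\dots,n\}$ the identity
\[
[(u,\ell\to r,v)] \;=\; [(u,\ell\to r,v_k)]^{\,c_k},
\]
where $v_0 = v$, $v_k = (f_k)_+$ for $k\ge 1$, $c_0$ is the identity of $\pi_1(\mathcal C,\rho)$, and for $k\ge 1$
\[
c_k \;=\; [((u\ell)*f_k)^{-1}]\,[((u\ell)*f_{k-1})^{-1}] \cdots [((u\ell)*f_1)^{-1}].
\]
The base case $k=0$ is trivial. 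For the inductive step, Lemma~\ref{red_edges}(2) applied to $[(u,\ell\to r,v_{k-1})]$ (with principal right edge $e^r_{v_{k-1}} = f_k$) gives
\[
[(u,\ell\to r,v_{k-1})] \;=\; [(u,\ell\to r,v_k)]^{\,[((u\ell)*f_k)^{-1}]},
\]
and combining with the inductive hypothesis via $(x^a)^b = x^{ab}$ yields $c_k$.

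At $k=n$ we have $v_n = \bar v$. Since the operation $w*(-)$ is compatible with path concatenation, $(u\ell)*d^r(v,\bar v)$ equals the concatenation $((u\ell)*f_1)\cdots((u\ell)*f_n)$, so its inverse is $((u\ell)*f_n)^{-1}\cdots((u\ell)*f_1)^{-1}$, and therefore $c_n = [((u\ell)*d^r(v,\bar v))^{-1}]$. This gives
\[
[(u,\ell\to r,v)] \;=\; [(u,\ell\to r,\bar v)]^{\,[((u\ell)*d^r(v,\bar v))^{-1}]}.
\]
Finally, Lemma~\ref{red_edges}(1) applied to the edge $(u,\ell\to r,\bar v)$ gives $[(u,\ell\to r,\bar v)] = [(\bar u,\ell\to r,\bar v)]$, which substituted into the previous equation yields the claimed identity.

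The only delicate point is the order of composition of the conjugating factors and the correct orientation of the $*$-operation: one must check that a right multiplication of the second coordinate by $f_k$ inside Lemma~\ref{red_edges}(2) corresponds, after the inductive step, to appending $((u\ell)*f_k)^{-1}$ on the \emph{left} of $c_{k-1}$, which is precisely what is needed to reconstruct $[((u\ell)*d^r(v,\bar v))^{-1}]$ from its factorisation into principal right edges. Everything else is a routine unwinding of the definitions.
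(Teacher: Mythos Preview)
Your proposal is correct and follows exactly the approach indicated in the paper's proof: iterate Lemma~\ref{red_edges}(2) along the edges of the right derivation $d^r(v,\bar v)$, then apply Lemma~\ref{red_edges}(1) once. The paper simply states this in one sentence, while you have carefully unwound the induction and verified that the conjugating factors assemble correctly into $[((u\ell)*d^r(v,\bar v))^{-1}]$. One trivial slip: in your setup you write $f_i = e^r_{(f_{i-1})_-}$, but since $(f_i)_- = (f_{i-1})_+$ this should read $f_i = e^r_{(f_{i-1})_+}$; the rest of the argument uses the correct vertex, so this does not affect the proof.
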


\begin{proof}
	Follows from a series of applications of Lemma \ref{red_edges}(2) and a single application of Lemma \ref{red_edges}(1).
\end{proof}

The following (for the case where $\mathcal P$ is complete) follows from \cite[Theorem 9.8]{GuSa}. 


%
%

\begin{theorem}\label{X}
	Let $\mathcal P=\la \Sigma\mid R\ra$ be a semi-complete rewriting system (with respect to some well-order on $\Sigma$). Let $\rho\in\Sigma$ be the smallest letter in $\Sigma$. Let $\mathcal C$ be the connected component of $\rho$ in the Squier complex $K(\mathcal P)$. Let $B$ be the set of all positive edges $e=(u,\ell\to r,v)$ in $K(\mathcal P)$ which satisfy the following conditions
	\begin{enumerate}
		\item[(1)] $u\ell v\sim \rho$ over $\mathcal P$ (i.e., the edge $e$ is in the connected component $\mathcal C$).
		\item[(2)] $u$ and $v$ are reduced over $\mathcal P$.
		\item[(3)] $e$ is not a principal left edge.
	\end{enumerate}
	Let $X$ be the set of all equivalence classes $[e]$ for $e\in B$. Then $X$ is a minimal
	generating set of $\pi_1(\mathcal C,\rho)$ of smallest possible cardinality.
\end{theorem}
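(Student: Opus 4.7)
The plan is to prove Theorem \ref{X} in two parts: first that $X$ generates $\pi_1(\mathcal C,\rho)$, and second that $|X|$ is the smallest possible cardinality of a generating set (which also implies that $X$ itself is minimal). The second part is the main obstacle, and will be handled by constructing, for each $x_0\in X$, a homomorphism $N_{x_0}\colon\pi_1(\mathcal C,\rho)\to\mathbb Z$ satisfying $N_{x_0}(x)=\delta_{x,x_0}$ for all $x\in X$.

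For the generation part, the standard construction of the fundamental group of a $2$-complex using the spanning tree $\mathcal T_{\mathcal C}$ presents $\pi_1(\mathcal C,\rho)$ with generating set $\{[e] : e \text{ a positive edge of } \mathcal C \text{ not a principal left edge}\}$. For such an edge $e=(u,\ell\to r,v)$, Corollary \ref{red} writes $[e]$ as a conjugate of $[(\bar u,\ell\to r,\bar v)]$. If $(\bar u,\ell\to r,\bar v)$ is a principal left edge then $[(\bar u,\ell\to r,\bar v)]$ is trivial in $\pi_1$, and so is $[e]$; otherwise $(\bar u,\ell\to r,\bar v)\in B$ and $[e]$ is a conjugate of an element of $X$. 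Either way, $[e]$ lies in the subgroup generated by $X$, hence $X$ generates $\pi_1(\mathcal C,\rho)$.

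For the minimality part, I define an auxiliary map $\alpha$ on positive edges of $\mathcal C$ by $\alpha(u,\ell\to r,v)=[(\bar u,\ell\to r,\bar v)]\in X$ when $(\bar u,\ell\to r,\bar v)$ is not a principal left edge, and $\alpha(u,\ell\to r,v)=*$ otherwise. For a loop $p=e_1^{\epsilon_1}\cdots e_n^{\epsilon_n}$ based at $\rho$, set $N_{x_0}(p)=\sum_{i : \alpha(e_i)=x_0}\epsilon_i$. The main obstacle is verifying that $N_{x_0}$ descends to a well-defined homomorphism on $\pi_1(\mathcal C,\rho)$; this amounts to showing it vanishes on the boundary $(u,\ell\to r,qsv)(urq,s\to t,v)(u,\ell\to r,qtv)^{-1}(u\ell q,s\to t,v)^{-1}$ of every $2$-cell of $K(\mathcal P)$ lying in $\mathcal C$. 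This reduces to the identities $\overline{qsv}=\overline{qtv}$ and $\overline{urq}=\overline{u\ell q}$, which follow from semi-completeness since these words are right (respectively left) divisors of $\rho$ and the pairs are equivalent over $\mathcal P$. Finally, a direct computation using the stability property — which ensures $\alpha(f)=*$ for every principal left edge $f$, and hence for every edge appearing in a left derivation $d^\ell(w,\rho)$ — gives $N_{x_0}(x)=\delta_{x,x_0}$ for every $x\in X$, establishing the $\mathbb Z$-linear independence of $X$ in $\pi_1(\mathcal C,\rho)^{\mathrm{ab}}$ and thereby the minimality of $|X|$.
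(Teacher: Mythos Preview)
Your generation argument has a genuine gap. From Corollary~\ref{red} you conclude that $[e]$ is a conjugate of $[(\bar u,\ell\to r,\bar v)]$, and then assert ``either way, $[e]$ lies in the subgroup generated by $X$.'' But a conjugate of an element of $X$ need not lie in $\langle X\rangle$ unless the conjugating element $[((u\ell)*d^r(v,\bar v))^{-1}]$ does as well, and you have not addressed this. The paper closes this gap by observing that every edge of the path $(u\ell)*d^r(v,\bar v)$ already has reduced third coordinate (since it comes from a right derivation), and then applying Lemma~\ref{red_edges}(1) to each such edge to replace its first coordinate by the equivalent reduced word without changing the class. After this, every edge appearing in the conjugating expression has both coordinates reduced, hence is either a principal left edge (trivial class) or belongs to $B$; so the conjugator is a word in $X\cup X^{-1}$ and the conclusion follows.

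Your minimality argument, on the other hand, is correct and in fact supplies more than the paper does: the paper only proves generation and otherwise defers to \cite[Section~9]{GuSa}. The verification that $N_{x_0}$ vanishes on $2$-cell boundaries is exactly right, using that $qsv\sim qtv$ are right divisors of $\rho$ and $urq\sim u\ell q$ are left divisors of $\rho$, so semi-completeness gives $\overline{qsv}=\overline{qtv}$ and $\overline{urq}=\overline{u\ell q}$. One point worth spelling out: you invoke stability to get $\alpha(f)=*$ for a principal left edge $f=(p,\ell'\to r',q)$, but stability only lets you vary the third coordinate. You also need that $p$ is already reduced (since every proper prefix of $p\ell'$ is reduced by Definition~\ref{df:principal}), so that $(\bar p,\ell'\to r',\bar q)=(p,\ell'\to r',\bar q)$, which is then principal left by stability.
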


\begin{proof} 
	The proof requires slight modifications of the arguments from \cite[Section 9]{GuSa}. We only explain here why $X$ is a generating set of $\pi_1(K(\mathcal P),\rho)$ (as this is the only part of the theorem we  use  below).
	Let $e=(u,\ell\to r, v)$ be a positive edge in the connected component $\mathcal C$. It suffices to prove that $[e]$ is generated by $X$.
	Let $d^r(v,\bar{v})$ be the right derivation from $v$ to $\bar{v}$. By Corollary \ref{red},
	$$(*)\ \ [(u,\ell\to r,v)]=[(\bar{u},\ell\to r,\bar{v})]^{[((u\ell)*d^r(v,\bar{v}))\iv]}.$$
	Note that since $d^r(v,\bar{v})$, is a right derivation, for every edge of $(d^r(v,\bar{v}))\iv$ the third coordinate is reduced. Clearly, this remains true for every edge of the path $((u\ell)*d^r(v,\bar{v}))\iv$
	Hence, all edges on the right hand side of $(*)$ have reduced third coordinates. By Lemma \ref{red_edges}(1) we can assume that all edges on the right hand side of $(*)$ have reduced first coordinates as well. Then for each edge $e'$ on the right hand side of $(*)$, $e'$  is either a principal left edge or the inverse of a  principal left edge (in which case, $[e']=1$) or it belongs to $B\cup B^{-1}$ (in which case $[e']\in X\cup X^{-1}$). Hence, $[e]$ is generated by elements of $X$, as required.
\end{proof}


The next observation is clear from the proof of Theorem \ref{X}.

\begin{lemma}\label{length}
	Let $\mathcal P=\la \Sigma\mid R\ra$ be a semi-complete rewriting system (with respect to some well-order on $\Sigma$). Let $\rho\in\Sigma$ be the smallest letter in $\Sigma$ and let $X$ be the generating set of $\pi_1(K(\mathcal P),\rho)$ from Theorem \ref{X}. Let $v$ be a right divisor of $\rho$. Then the word length of $[d^r(v,\bar{v})]$ with respect to $X$ is bounded from above by the length (i.e., the number of edges) of the derivation $d^r(v,\bar{v})$ viewed as a directed path in $K(\mathcal P)$.
\end{lemma}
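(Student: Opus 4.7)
The plan is to decompose the right derivation edge-by-edge and use the results already established (Lemma~\ref{red_edges}(1) together with the proof of Theorem~\ref{X}) to show that each edge contributes at most one letter to the word in $X$.

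Write $d^r(v,\bar v) = e_1 e_2 \cdots e_n$, where each $e_i = (u_i, \ell_i \to r_i, v_i)$ is a principal right edge of $K(\mathcal P)$. By the very definition of the equivalence class of a path,
\[
[d^r(v,\bar v)] \;=\; [e_1]\,[e_2]\,\cdots\,[e_n]
\]
in $\pi_1(\mathcal C,\rho)$. It therefore suffices to show that every $[e_i]$ is either trivial or an element of $X$; the bound $n$ on the word length then follows immediately.

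Since $e_i$ is a principal right edge, by the analogue of Definition~\ref{df:principal} for the right case, every proper suffix of $\ell_i v_i$ is reduced, so in particular the third coordinate $v_i$ is reduced. Now apply Lemma~\ref{red_edges}(1) to replace the (possibly non-reduced) first coordinate by its unique reduced equivalent:
\[
[e_i] \;=\; [(\,\bar u_i,\; \ell_i\to r_i,\; v_i\,)].
\]
The edge $e_i' := (\bar u_i, \ell_i\to r_i, v_i)$ now has both first and third coordinates reduced, so it satisfies conditions~(1) and~(2) of Theorem~\ref{X}. There are two possibilities: either $e_i'$ is itself a principal left edge, in which case $[e_i'] = 1$ in $\pi_1(\mathcal C,\rho)$, or $e_i' \in B$, in which case $[e_i'] \in X$ by definition.

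In either case, $[e_i]$ contributes at most one letter to the word expressing $[d^r(v,\bar v)]$ in terms of $X$. Hence the word length of $[d^r(v,\bar v)]$ with respect to $X$ is at most $n$, the number of edges of the derivation. There is no genuine obstacle here: the lemma is essentially a bookkeeping consequence of the fact that the generating set $X$ in Theorem~\ref{X} is already tailored to edges with reduced endpoints, and the edges of a right derivation automatically satisfy the right-hand reducedness condition.
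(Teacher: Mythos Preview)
Your proof is correct and follows precisely the approach the paper indicates: the paper itself does not give a separate argument but simply states that the lemma ``is clear from the proof of Theorem~\ref{X}'', and what you have written is exactly that proof specialised to a right derivation (third coordinates are already reduced since the edges are principal right edges, then Lemma~\ref{red_edges}(1) reduces the first coordinates, and each resulting edge is either a principal left edge or lies in $B$).
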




\section{Closed subgroups of $F$ with finite core are   finitely generated and have linear distortion in $F$}\label{sec:fin}

In this section we prove that if $H$ is a closed subgroup of $F$ whose core is a finite directed $2$-complex, then $H$ is finitely generated and undistorted in $F$. Note that by  \cite[Corollary 10.8]{G}, if  $H$ is closed and its core is a finite directed $2$-complex then there is a finitely generated subgroup $K$ of $F$ such that $H=Cl(K)$ (i.e., such that the core of $K$ coincides with the core of $H$). Here we prove that if $H$ is closed and has a finite core, then $H$ itself must be finitely generated. 

Let $H$ be a subgroup of $F$ with core $\mathcal C(H)$.
 Let $\rho$ be the distinguished edge of the core and let $\iota$ and $\tau$ be the initial and terminal vertices of the core, respectively.
Let $E$ be the set of directed edges of the core and let $R$ be the set of all relations of the form $\bott(\pi)\to\topp(\pi)$ for all cells $\pi$ in the core $\mathcal C(H)$. Let $\mathcal P=\la E\mid R\ra$ be the core rewriting system of $H$. 
Note that every word $w=e_1\cdots e_n$ over $E$ can be viewed as a word over the alphabet of $\mathcal P$ and 
as a sequence of directed edges in the core $\mathcal C(H)$. We will say that $w$ is a \emph{directed path} in the core if for every $i<n$ the terminal vertex of the edge ${e_i}$ coincides with the initial vertex of ${e_{i+1}}$.

We make use of the following theorem. 

\begin{theorem}[{\cite[Propositions 10.1 and 10.2]{G}}]\label{paths}
	Let $H$ be a closed subgroup of $F$ and let $\mathcal C(H)$ be the core of $H$.	Let $\mathcal P=\la E\mid R\ra$ be the core rewriting system and let $\rho\in E$ be the distinguished edge of the core. Let $w,w_1,w_2$ be words over the alphabet $E$. Then the following assertions hold.
	
	\begin{enumerate}
				
		\item[(1)] The element $w$ is a left divisor of   $\rho$ if and only if $w$ is a directed path in the core $\mathcal C(H)$ with initial vertex $\iota$.
		
		\item[(2)] The element $w$ is a right divisor of   $\rho$ if and only if $w$ is a directed path in the core $\mathcal C(H)$ with terminal vertex $\tau$.
		
		\item[(3)] Assume that $w_1$ and $w_2$ are left  divisors of $\rho$, (so that they are both directed paths in the core $\mathcal C(H)$ with initial vertex $\iota$). Then $w_1\sim w_2$ if and only if $(w_1)_+=(w_2)_+$ (i.e., if and only if the directed paths $w_1$ and $w_2$ in the core $\mathcal C(H)$ have the same terminal vertex).
		
		\item[(4)]  Assume that $w_1$ and $w_2$ are right  divisors of $\rho$, so that they are both directed paths in the core $\mathcal C(H)$ with terminal vertex $\tau$. Then $w_1\sim w_2$ if and only if $(w_1)_-=(w_2)_-$ (i.e., if and only if the directed paths $w_1$ and $w_2$ in the core $\mathcal C(H)$ have the same initial vertex).
	\end{enumerate}
\end{theorem}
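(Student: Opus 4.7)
The plan is to prove (1) and (2) first and then deduce (3) and (4) from them, exploiting the symmetry of the core under reversal of edge directions to pass from (1) to (2) (and from (3) to (4)). For the forward direction of (1), suppose $wv\sim\rho$ for some word $v$ over $E$. Since every rule in $R$ has the form $e_1e_2\to e$ with left-hand side of length $2$ and right-hand side of length $1$, and since $|\rho|=1$, there is a positive ``expanding'' derivation from $\rho$ to $wv$ using only the reversed rules $e\to e_1e_2$. I would induct on the length of this derivation: the starting word $\rho$ is a directed path from $\iota$ to $\tau$ in $\mathcal C(H)$, and if the current word is a directed path from $\iota$ to $\tau$ in the core, then expanding an edge $e$ (running from some vertex $a$ to some vertex $b$) by the unique cell $\pi$ with $\topp(\pi)=e$ and $\bott(\pi)=e_1e_2$ replaces $e$ by the consecutive pair $e_1e_2$, where $e_1$ goes from $a$ to the ``middle vertex'' of $\pi$ and $e_2$ from there to $b$. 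The new word is still a directed path from $\iota$ to $\tau$, so its prefix $w$ is a directed path starting at $\iota$.

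For the backward direction of (1), let $w$ be a directed path from $\iota$ to some vertex $\nu$; by Remark \ref{core structure} choose a directed path $v$ from $\nu$ to $\tau$, so that $wv$ is directed from $\iota$ to $\tau$. The claim reduces to showing that every directed path in $\mathcal C(H)$ from $\iota$ to $\tau$ is $\sim\rho$. I would induct on the length $n$ of the path: for $n=1$, the unique directed edge from $\iota$ to $\tau$ is $\rho$ by Remark \ref{core structure}; for $n\geq 2$, the key point---and the main obstacle---is to locate a consecutive pair $e_ie_{i+1}$ that is the bottom of some cell of the core, so that a reducing rule $e_ie_{i+1}\to e$ shortens the path while keeping it directed from $\iota$ to $\tau$. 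I would establish this by tracing back to the bouquet-of-spheres construction of $\mathcal C(H)$: a directed path from $\iota$ to $\tau$ lifts, modulo the identifications made by foldings, across some generating diagram $\Delta$, where the cells on the horizontal path of $\Delta$ make a reducible pair visible; foldings preserve this since they only identify whole top or whole bottom edges of matching cells. Iterating the reduction yields a positive derivation from $wv$ to $\rho$, establishing $w$ as a left divisor. Part (2) then follows by reversing all arrows (exchanging the roles of $\iota$ and $\tau$ and of $\topp$ and $\bott$).

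For (3), the forward direction follows from (1) by examining any derivation realizing $w_1\sim w_2$: each elementary expansion or reduction preserves the terminal vertex of the common prefix in the core, exactly as in the inductive step for (1). For the backward direction, suppose $w_1,w_2$ are directed paths from $\iota$ both ending at $\nu$; choose a directed path $v$ from $\nu$ to $\tau$ and apply (1) to conclude $w_1v\sim\rho\sim w_2v$. Composing a $(w_1v,\rho)$-diagram with the inverse of a $(w_2v,\rho)$-diagram produces a $(w_1v,w_2v)$-diagram, and the remaining task is to split off the common suffix $\epsilon(v)$ to extract a $(w_1,w_2)$-diagram. This cancellation is the main obstacle in (3), and I would handle it using the tree structure of $\mathcal P$: since no two rules share a left- or a right-hand side, applying the right-to-left enumeration of cells from Definition \ref{rl} to the composed reduced diagram forces the rightmost $|v|$ edges of both the top and bottom paths to be related only through trivial cells, so the portion of the diagram over the common suffix $v$ splits off as $\epsilon(v)$. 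Part (4) follows by symmetry.
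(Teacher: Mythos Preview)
The paper does not prove this theorem at all; it quotes the result from \cite[Propositions 10.1 and 10.2]{G} and uses it as a black box. So there is no ``paper's own proof'' to compare against, and your outline must be judged on its own.

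Your argument for the forward direction of (1) is essentially fine, though you do not need (and have not justified) the existence of a purely expanding derivation from $\rho$ to $wv$. It is simpler to note that \emph{any} single rule application---contracting $e_1e_2\to e$ or expanding $e\to e_1e_2$---preserves the property ``is a directed path in $\mathcal C(H)$ from $\iota$ to $\tau$'', because in the core the cell $\pi$ with $\bott(\pi)=e_1e_2$ and $\topp(\pi)=e$ has $(e_1)_-=e_-$ and $(e_2)_+=e_+$. Inducting along an arbitrary derivation then gives what you want, and the same observation immediately yields the forward direction of (3).

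The genuine gap is in the backward direction of (1). Your key step is to show that any directed path $e_1\cdots e_n$ from $\iota$ to $\tau$ with $n\ge 2$ contains a consecutive pair $e_ie_{i+1}$ that is the bottom of some cell. Your justification---that the path ``lifts, modulo the identifications made by foldings, across some generating diagram $\Delta$''---does not work. Foldings identify vertices and edges coming from \emph{different} generating diagrams $\Delta_i$, so a directed path in $\mathcal C(H)$ may cross from one $\Delta_i$ to another at such an identified vertex and need not lift to a path in any single generating diagram. At an inner vertex $\nu$ there can be several incoming and several outgoing edges, and a given pair $(e_i,e_{i+1})$ through $\nu$ is not \emph{a priori} the bottom of any cell. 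Establishing the existence of a reducible pair (or, equivalently, proving directly that any two $\iota$--$\nu$ paths are $\sim$-equivalent) requires a finer structural analysis of how incoming and outgoing edges at each inner vertex are linked by cells after folding; this is the actual content of the cited propositions in \cite{G}, and your sketch does not supply it.

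The backward direction of (3) inherits the same difficulty. Even granting (1), your cancellation step---splitting off $\epsilon(v)$ from a reduced $(w_1v,w_2v)$-diagram---is asserted rather than proved. Since $\mathcal P$ is not assumed confluent, the semigroup it defines is not obviously right-cancellative, and showing that the rightmost $|v|$ edges of the reduced diagram carry no cells needs an explicit argument about the local structure near $\tau(\Psi)$ that you have not given.
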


Note that by Theorem \ref{paths} a word $w$ over $E$ satisfies $w\sim\rho$ over $\mathcal P$ if and only if $w$ is a directed  path in the core $\mathcal C(H)$ with initial vertex $\iota$ and terminal vertex $\tau$.

Now, assume that some well order is fixed on the alphabet $E$, so that $\rho$ is the least element of $E$ in that order. Let $\nu$ be a vertex of the core. Let $P_{\iota,\nu}$ be the set of all directed paths from $\iota$ to $\nu$ in the core  $\mathcal C(H)$ and note that by Remark \ref{core structure}, this set is necessarily non-empty. (For $\nu=\iota$, this set only contains the trivial (empty) path from $\iota$ to itself). We denote by $\iota_{\nu}$ the smallest word in $P_{\iota,\nu}$ with respect to the ShortLex order on $E^*$ induced by the well order on $E$. Similarly, for every $\nu$ we consider the set $P_{\nu,\tau}$ of all directed paths in the core from $\nu$ to $\tau$. We denote by $\tau_\nu$ the smallest word in $P_{\nu,\tau}$ in the short-lexicographic order. Note also that $\iota_\tau=\tau_\iota=\rho$ and that $\iota_\iota$ and $\tau_\tau$ are empty paths.

We define two sets of rewriting rules $u\to v$ for which $u\sim v$ over $\mathcal P$.
The set $R_{\iota}$ of \emph{initial rewriting rules} (i.e., rewriting rules associated with paths on the core starting from the initial vertex $\iota$) 
 is defined as follows.
 \begin{equation*}
 \begin{split}
 R_\iota=\{ \iota_{e_-} e\to \iota_{e_+}\mid \mbox{ $e$ is an edge of $\mathcal C(H)$ such that $\iota_{e_-} e\not\equiv \iota_{e_+}$}
 \}
 \end{split}
 \end{equation*}
(Note that for every rewriting rule in $R_\iota$ the left and right hand sides are indeed equivalent over $\mathcal P$ since they are both paths with initial vertex $\iota$ and the same terminal vertex).
Similarly, the set $R_\tau$ of \emph{terminal rewriting rules} is defined as follows.
 \begin{equation*}
\begin{split}
R_\tau=\{ e\tau_{e_+} \to \tau_{e_-}\mid \mbox{ $e$ is an edge of $\mathcal C(H)$ such that $e\tau_{e_+} \not\equiv \tau_{e_-}$}
\}
\end{split}
\end{equation*}


Now, let $R'=R\cup R_\iota\cup R_\tau$ and consider the rewriting system $\mathcal P'=\la E\mid R'\ra$.
Note that for any two words $u,v\in E^*$, the words $u$ and $v$ are equivalent over $\mathcal P$ if and only if they are equivalent over $\mathcal P'$.
The next three lemmas show that $\mathcal P'$ is a semi-complete rewriting system. 

\begin{lemma}\label{term}
	The following assertions hold for the rewriting system $\mathcal P'=\la E\mid R'\ra$.
	\begin{enumerate}
		\item[(1)] 	Let $u\to v$ be a rewriting rule in $R'$. Then $v$ is smaller than $u$ in the ShortLex order on $E^*$.
		\item[(2)] For every vertex $\nu$ of the core $\mathcal C(H)$, the words $\iota_\nu$ and $\tau_\nu$ are reduced over $\mathcal P'$.
	\end{enumerate}
\end{lemma}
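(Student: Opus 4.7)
The plan is to handle (1) first by a direct case analysis on the three types of rules in $R' = R \cup R_\iota \cup R_\tau$, and then derive (2) by a minimality argument that leverages (1) together with Theorem \ref{paths}.

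For (1), rules in $R$ have the form $\bott(\pi) \to \topp(\pi)$, where by the structure of the core $|\bott(\pi)| = 2 > 1 = |\topp(\pi)|$, so the inequality is automatic from the length component of ShortLex. For a rule $\iota_{e_-} e \to \iota_{e_+}$ in $R_\iota$, the left-hand side is itself a directed path from $\iota$ to $e_+$, obtained by concatenating the path $\iota_{e_-}$ (from $\iota$ to $e_-$) with the edge $e$ (from $e_-$ to $e_+$); hence $\iota_{e_-} e \in P_{\iota, e_+}$. Since $\iota_{e_+}$ is by definition the ShortLex minimum of $P_{\iota, e_+}$, we get $\iota_{e_+} \le \iota_{e_-} e$, and the side condition $\iota_{e_-} e \not\equiv \iota_{e_+}$ built into the definition of $R_\iota$ makes the inequality strict. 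The $R_\tau$ case is completely symmetric, using that $e\,\tau_{e_+}$ is a path from $e_-$ to $\tau$ and $\tau_{e_-}$ is the ShortLex minimum of $P_{e_-, \tau}$.

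For (2) I argue by contradiction; by symmetry it is enough to treat $\iota_\nu$. Suppose some rule $u \to v$ in $R'$ applies to $\iota_\nu$, so $\iota_\nu = x \cdot u \cdot y$ for some $x, y \in E^*$, and set $\iota_\nu' = x \cdot v \cdot y$. Since $v < u$ in ShortLex by (1), substitution at a fixed position gives $\iota_\nu' < \iota_\nu$ in ShortLex (shorter overall length if $|v|<|u|$; same length with a lexicographic decrease at position $|x|$ otherwise). It remains to show $\iota_\nu' \in P_{\iota, \nu}$, which will contradict the minimality of $\iota_\nu$. Each rule in $R'$ relates words that are equivalent over $\mathcal P$: for rules in $R$ this is the definition, and for rules in $R_\iota$ (resp.\ $R_\tau$) the two sides are both directed paths in the core with the same pair of endpoints, so Theorem \ref{paths}(3) (resp.\ (4)) yields equivalence. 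Hence $\iota_\nu' \sim \iota_\nu$ over $\mathcal P$. Concatenating with $\tau_\nu$ gives $\iota_\nu' \cdot \tau_\nu \sim \iota_\nu \cdot \tau_\nu \sim \rho$, so $\iota_\nu'$ is a left divisor of $\rho$; Theorem \ref{paths}(1) identifies $\iota_\nu'$ with a directed path from $\iota$, and Theorem \ref{paths}(3) applied to $\iota_\nu' \sim \iota_\nu$ forces its terminal vertex to be $\nu$. The analogous argument for $\tau_\nu$ uses Theorem \ref{paths}(2) and (4) in place of (1) and (3).

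The only real bookkeeping is checking that the new rewriting rules in $R_\iota \cup R_\tau$ preserve $\sim$-equivalence over the original system $\mathcal P$, and this is precisely where Theorem \ref{paths}(3)(4) enters; once this is in hand, the rest of the proof is a short ShortLex calculation combined with the minimality built into the definitions of $\iota_\nu$ and $\tau_\nu$.
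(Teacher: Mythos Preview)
Your proof is correct and follows essentially the same approach as the paper: a case analysis on $R$, $R_\iota$, $R_\tau$ for part~(1), and a ShortLex minimality contradiction for part~(2). The paper is slightly terser in (2), simply noting that $P_{\iota,\nu}$ coincides with the $\mathcal P$-equivalence class of $\iota_\nu$ (a fact established in the paragraphs preceding the lemma), whereas you spell out this identification via Theorem~\ref{paths}(1)(3); the substance is the same.
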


\begin{proof}
	(1) First note that every rewriting rule $\ell\to r$ in $R$ is length decreasing. Indeed, if $\ell\to r\in R$, then $|\ell|=2$ and $|r|=1$.
	It remains to consider rewriting rules in $R_\iota$ and $R_\tau$. Let $e$ be an edge such that $\iota_{e_-}e\to \iota_{e_+}$ is a rewriting rule in $R_\iota$. Then $\iota_{e_-}e\not\equiv \iota_{e_+}$ and both $\iota_{e_-}e$ and $\iota_{e_+}$ are paths in $P_{\iota,e_+}$. By the choice of $\iota_{e_+}$, its ShortLex order is smaller than that of $\iota_{e_-}e$. The argument for rewriting rules in $R_{\tau}$ is similar.
	
	(2) By part (1), every rewriting rule decreases the ShortLex order of any word it is applied to. Assume by contradiction that there is a vertex $\nu$ for which $\iota_\nu$ is not reduced over $\mathcal P'$. Then $\iota_\nu$ is equivalent over $\mathcal P'$ (equivalently, equivalent over $\mathcal P$) to a word with a smaller ShortLex order, in contradiction to the choice of $\iota_\nu$. (Note that the set $P_{\iota,\nu}$ defined above is the set of all words $w$ equivalent to $\iota_\nu$ over $\mathcal P$.). Hence, $\iota_\nu$ must be reduced. The argument for $\tau_\nu$ is similar.
\end{proof}





\begin{lemma}\label{left divisor}
	Let $w$ be a left divisor of $\rho$ over $\mathcal P'$. Then $w\sim \iota_{w_+}$ and if $w$ is reduced over $\mathcal P'$ then $w\equiv \iota_{w_+}$. In other words, the unique reduced word over $E$, equivalent to $w$ over $\mathcal P'$, is $\iota_{w_+}$.
\end{lemma}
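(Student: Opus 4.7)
The first conclusion, $w \sim \iota_{w_+}$, is essentially immediate from Theorem \ref{paths}. The rules added to pass from $\mathcal P$ to $\mathcal P'$ all have left- and right-hand sides that are already equivalent over $\mathcal P$, so $\mathcal P'$ defines the same semigroup as $\mathcal P$; in particular $w$ is a left divisor of $\rho$ over $\mathcal P$. Theorem \ref{paths}(1) then identifies $w$ with a directed path in the core starting at $\iota$, whose terminal vertex is by definition $w_+$. Since $\iota_{w_+} \in P_{\iota,w_+}$ is such a path as well, Theorem \ref{paths}(3) yields $w \sim \iota_{w_+}$.

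For the second conclusion, I would induct on the length $n = |w|$. The base case $n = 0$ is immediate: $w$ is empty, so $w_+ = \iota$, and $\iota_\iota$ is the empty word by convention. For $n \ge 1$, factor $w = w'e$, where $e$ is the last letter. Since $w$ is a directed path in the core, the prefix $w'$ is a directed path from $\iota$ to $e_-$, hence by Theorem \ref{paths}(1) it is a left divisor of $\rho$. The crucial observation is that $w'$ is itself reduced over $\mathcal P'$: any occurrence inside $w'$ of a left-hand side of a rule of $R'$ is also an occurrence inside $w = w'e$ at the same position, so a reduction of $w'$ would give a reduction of $w$. The induction hypothesis applied to $w'$ therefore gives $w' \equiv \iota_{e_-}$, and hence $w \equiv \iota_{e_-}\, e$.

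It then remains to argue that $\iota_{e_-} e \equiv \iota_{e_+}$, since then $w \equiv \iota_{e_+} = \iota_{w_+}$. If on the contrary $\iota_{e_-} e \not\equiv \iota_{e_+}$, then by the very definition of $R_\iota$ the rule $\iota_{e_-} e \to \iota_{e_+}$ belongs to $R' = R \cup R_\iota \cup R_\tau$, and its left-hand side is exactly $w$; so it reduces $w$, contradicting that $w$ is reduced. This completes the induction.

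The only real subtlety, and therefore the step that needs care, is the claim that $w'$ inherits reducedness from $w$, because the rules of $R_\iota$ can have arbitrarily long left-hand sides (of length $|\iota_{e_-}|+1$). This is handled by the elementary observation that appending a letter on the right cannot destroy any substring occurrence lying wholly to the left. In particular, no global confluence property of $\mathcal P'$ is needed for this lemma: the lemma is precisely the statement that $R_\iota$ has been added in such a way that reduced left divisors of $\rho$ are forced to be the canonical representatives $\iota_\nu$.
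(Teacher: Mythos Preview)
Your proof is correct and follows essentially the same approach as the paper: both establish $w\sim\iota_{w_+}$ via Theorem~\ref{paths}, then induct on $|w|$ by peeling off the last edge $e$, using that prefixes of reduced words are reduced to get $w'\equiv\iota_{e_-}$, and finally invoking the definition of $R_\iota$ to force $\iota_{e_-}e\equiv\iota_{e_+}$. The paper's version is slightly terser but structurally identical.
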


\begin{proof}
	Since $w$ and $\iota_{w_+}$ are both paths on the core $\mathcal C(H)$ with initial vertex $\iota$ and terminal vertex $w_+$, by Theorem \ref{paths}, we have $w\sim\iota_{w_+}$ over $\mathcal P'$. By Lemma \ref{term}, $\iota_{w_+}$ is reduced over $\mathcal P'$. Hence, it suffices to prove that if $w$ is reduced then $w\equiv \iota_{w_+}$. If $w$ is the empty word, $w\equiv\iota_\iota$ and we are done. Hence, let $e$ be the last edge in the directed path $w$ and let $u$ be such that $w\equiv ue$. Since $w$ is reduced over $\mathcal P'$, its prefix $u$ is also reduced over $\mathcal P'$. Hence, by induction, $u\equiv \iota_{u_+}$. Since $ue\equiv \iota_{u_+}e\equiv \iota_{e_-}e$, the relation $w\equiv ue\equiv \iota_{e_-}e\to \iota_{e_+}\equiv \iota_{w_+}$ belongs to $R_\iota$, unless its right-hand side coincides (letter by letter) with its left hand-side. Since $w$ is reduced by assumption (and cannot be the left hand side of any relation in $R_\iota$) we must have $w\equiv \iota_{w_+}$.
\end{proof}

The following lemma is a right-left analog of Lemma \ref{left divisor}.

\begin{lemma}\label{right divisor}
	Let $w$ be a right divisor of $\rho$ over $\mathcal P$. Then $w\sim \tau_{w_-}$ and if $w$ is reduced over $\mathcal P'$ then $w\equiv \tau_{w_-}$. In other words, the unique reduced word over $E$, equivalent to $w$ over $\mathcal P'$, is $\tau_{w_-}$.\qed
\end{lemma}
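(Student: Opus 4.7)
The plan is to mirror the proof of Lemma \ref{left divisor} verbatim, but reading directed paths from right to left and replacing the initial-vertex machinery with the terminal-vertex machinery, i.e., using Theorem \ref{paths}(4) in place of Theorem \ref{paths}(3), the set $R_\tau$ in place of $R_\iota$, and the paths $\tau_\nu$ in place of $\iota_\nu$.

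First I would verify the equivalence part. Since $w$ is a right divisor of $\rho$, by Theorem \ref{paths}(2) $w$ is a directed path in $\mathcal C(H)$ that ends at $\tau$; the word $\tau_{w_-}$ is by definition also a directed path from $w_-$ to $\tau$. The two paths share the same initial vertex $w_-$, so Theorem \ref{paths}(4) gives $w\sim \tau_{w_-}$ (the equivalence holds over $\mathcal P$, and hence over $\mathcal P'$ since $R\subseteq R'$). Moreover Lemma \ref{term}(2) tells us that $\tau_{w_-}$ is reduced over $\mathcal P'$.

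Next I would show by induction on the length of $w$ that if $w$ is reduced over $\mathcal P'$ then $w\equiv \tau_{w_-}$ letter-by-letter. If $w$ is empty then $w_-=\tau$ and $\tau_\tau$ is the empty word, so the claim is immediate. Otherwise write $w\equiv eu$, where $e$ is the first edge of the directed path $w$ and $u$ is its remainder. Since any rewriting rule applicable to $u$ would also be applicable to $w$ at the corresponding position, the suffix $u$ is reduced over $\mathcal P'$; moreover $u$ is itself a right divisor of $\rho$ with $u_-=e_+$, so by the inductive hypothesis $u\equiv \tau_{e_+}$. Hence $w\equiv e\tau_{e_+}$.

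Finally I would examine the candidate rule $e\tau_{e_+}\to \tau_{e_-}$. By definition of $R_\tau$, this lies in $R_\tau$ precisely when $e\tau_{e_+}\not\equiv \tau_{e_-}$. If it did belong to $R_\tau$, then $w\equiv e\tau_{e_+}$ would not be reduced, contradicting our hypothesis. Therefore $e\tau_{e_+}\equiv \tau_{e_-}\equiv \tau_{w_-}$, giving $w\equiv \tau_{w_-}$ as required. I do not anticipate any serious obstacle here; the only mild point to be careful about is that reducedness of $w$ passes to its suffix $u$, which follows from the fact that the context on the left of an occurrence of a left-hand side is irrelevant to whether a rewriting rule applies. \qed
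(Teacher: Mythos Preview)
Your proposal is correct and carries out exactly the right-left analog of the proof of Lemma \ref{left divisor} that the paper intends; the paper itself omits the proof entirely, signalling with the \qed\ mark after the statement that one simply mirrors the preceding argument with $\tau_\nu$, $R_\tau$, and Theorem \ref{paths}(2),(4) in place of $\iota_\nu$, $R_\iota$, and Theorem \ref{paths}(1),(3).
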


Lemmas \ref{term},\ref{left divisor},\ref{right divisor} imply that $\mathcal P'=\la E\mid R'\ra$ is a semi-complete rewriting system (with respect to the fixed well order on $E$).

\begin{df}
	Let $P=\la E\mid R\ra$ be a core rewriting system and let $\rho$ be the distinguished edge of the core. If one fixes a well-order on $E$ (such that $\rho$ is the least element in that order), one can follow the above construction to get the sets of rewriting rule $R_\iota$ and $R_\tau$ as above.
	We will refer to the rewriting system $\mathcal P'=\la E\mid R'=R\cup R_\iota\cup R_\tau\ra$ as the \emph{semi-completion} of the core presentation $\mathcal P$ (with respect to the fixed well order on $\Sigma$).
\end{df}

Note that the semi-completion $\mathcal P'$ of the core presentation $\mathcal P$ is determined uniquely once the well-order on $E$ is fixed.


The following lemma describes left derivations over the semi-completion $\mathcal P'$ which start from left divisors of the distinguished edge $\rho$.

\begin{lemma}\label{left edges}
	Let $\mathcal P=\la E\mid R\ra$ be the core rewriting system of some subgroup of $F$ and let $\rho$ be the distinguished edge of the core.
	Let $\mathcal P'=\la E\mid R'=R\cup R_\iota\cup R_\tau\ra$ be the semi-completion of $\mathcal P$ (with respect to some fixed well order on $E$). Let $u$ be a non-reduced left divisor of $\rho$ in $\mathcal P'$.  Let $u'$, $e$ and $u''$ be such that $u\equiv u'eu''$ , where $u'$ is the longest reduced prefix of $u$ and $e\in E$. Then the following assertions hold.
	\begin{enumerate}
		\item[(1)]
		The unique outgoing principal left edge of $u$ is
		$$e^\ell_{u}=(\iota_\iota,\iota_{e_-}e\to\iota_{e_+},u'').$$
		\item[(2)] All the rewriting rules applied in the left derivation from $u$ to $\bar{u}$ belong to $R_\iota$. In addition, the length of the left derivation from $u$ to $\bar{u}$ is bounded from above by the length of the suffix $eu''$.
	\end{enumerate}
\end{lemma}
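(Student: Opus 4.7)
The plan is first to pin down the structure of the longest reduced prefix $u'$ via Lemma~\ref{left divisor}, then to verify that the prescribed edge satisfies the three clauses of Definition~\ref{df:principal}, and finally to iterate this analysis to deduce part (2).

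Since $u$ is a left divisor of $\rho$, every prefix of $u$ is also a left divisor of $\rho$, and in particular $u'$ is a reduced left divisor of $\rho$ in $\mathcal P'$. Applying Lemma~\ref{left divisor} to $\mathcal P'$ gives $u' \equiv \iota_{u'_+}$. Because $u \equiv u'eu''$ is a directed path in the core by Theorem~\ref{paths}(1), we must have $u'_+ = e_-$, so $u' \equiv \iota_{e_-}$. The maximality of $u'$ forces $u'e = \iota_{e_-}e$ to be non-reduced; since $\iota_{e_+}$ is reduced by Lemma~\ref{term}(2), we have $\iota_{e_-}e \not\equiv \iota_{e_+}$, so the rule $\iota_{e_-}e \to \iota_{e_+}$ belongs to $R_\iota$ by construction.

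To verify that $(\iota_\iota,\iota_{e_-}e \to \iota_{e_+}, u'')$ is the principal left edge at $u$: clause~(1) of Definition~\ref{df:principal} holds because every proper prefix of $\iota_{e_-}e$ is a prefix of the reduced word $\iota_{e_-}$; clause~(2) is trivial since the candidate suffix is the whole word $\iota_{e_-}e$, which is a left-hand side in $R_\iota$; and clause~(3) holds because any alternative right-hand side $r'$ of a rule with left-hand side $\iota_{e_-}e$ satisfies $r' \sim \iota_{e_-}e$, so by Theorem~\ref{paths}(3) it represents a directed path from $\iota$ to $e_+$ in the core, while $\iota_{e_+}$ is by its very definition the ShortLex-smallest such path. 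Uniqueness of the principal left edge then gives the formula asserted in part~(1).

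For part (2), iterate part~(1). After applying $e^\ell_u$, the current word becomes $\iota_{e_+}u''$, which is still equivalent to $u$ and hence a left divisor of $\rho$. Since $\iota_{e_+}$ is reduced, the longest reduced prefix of $\iota_{e_+}u''$ has length at least $|\iota_{e_+}|$, so the ``suffix beyond the longest reduced prefix'' has length at most $|u''|$, i.e., strictly less than the previous value $|eu''|$. If the word is reduced the derivation terminates; otherwise part~(1) applies again and produces another principal left edge whose rule lies in $R_\iota$. A straightforward induction on this suffix length then yields both claims simultaneously: every rule used in $d^\ell(u,\bar u)$ lies in $R_\iota$, and the total number of steps is at most $|eu''|$.

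The main technical point to watch is clause~(3) in the verification of part~(1): one must rule out the possibility that some other rule (possibly in $R$ or $R_\tau$) sharing the left-hand side $\iota_{e_-}e$ has a right-hand side strictly smaller than $\iota_{e_+}$ in the ShortLex order. This is exactly where the minimality built into the definition of $\iota_\nu$, combined with Theorem~\ref{paths}(3), is indispensable.
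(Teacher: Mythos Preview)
Your proof is correct and follows essentially the same route as the paper: identify $u'\equiv\iota_{e_-}$ via Lemma~\ref{left divisor}, check the three clauses of Definition~\ref{df:principal} for the proposed edge, and then induct on the length of the suffix past the longest reduced prefix. The only cosmetic difference is that for clause~(3) the paper simply invokes the ShortLex-minimality of $\iota_{e_+}$ among words equivalent to $\iota_{e_-}e$, whereas you spell this out via Theorem~\ref{paths}(3); these are the same argument.
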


\begin{proof}
	(1)	Since $u'$ is a reduced left divisor of $\rho$, by Lemma \ref{left divisor}, we have $u'\equiv \iota_{u'_+}\equiv\iota_{e_-}$, so that $u\equiv \iota_{e_-}eu''$.
	Since $u'e\equiv\iota_{e_-}e$ is not reduced by assumption, $\iota_{e_-}e\to \iota_{e_+}$ is a rewriting rule in $R_\iota$. Hence, the edge  $(\iota_\iota,\iota_{e_-}e\to\iota_{e_+},u'')$ is an outgoing edge of $u$. Note that this edge 
	satisfies all three conditions of Definition \ref{df:principal}. Indeed, Condition (1) holds since every strict prefix of $\iota_{e_-}e$ is reduced. Condition (2) holds since $\iota_{e_-}e$ is the longest suffix of itself and Condition (3) holds since $\iota_{e_+}$ is the smallest word (in the ShortLex order) equivalent to $\iota_{e-}e$ over $\mathcal P'$ and since the relation $\iota_{e_-}e\to\iota_{e_+}\in R'$. Hence, the edge $(\iota_\iota,\iota_{e_-}e\to\iota_{e_+},u'')$ is the unique outgoing  principal left edge of $u$.
	
	(2) Since the left derivation from $u$ to $\bar{u}$ only visits left divisors of $\rho$, it follows from Part (1) that all the rewriting rules applied in the derivation are rewriting rules from $R_\iota$.
	
	Now, let $d^\ell(u,\bar{u})$ be the left derivation from $u$ to $\bar{u}$. We claim that  $|d^\ell(u,\bar{u})|\leq|eu''|$ where $u\equiv u'eu''$ and $u'$ is the longest reduced prefix of $u$.  By Part (1), $e^\ell_{u}=(\iota_\iota,\iota_{e-}e\to\iota_{e_+},u'')$. Hence,  $(e^\ell_u)_+=\iota_{e_+}u''$. Let $v=(e^\ell_u)_+$. Since $\iota_{e_+}$ is reduced over $\mathcal P'$,
	if one removes the longest reduced prefix of $v$, the length of the remaining suffix is at most $|u''|$. Hence, by induction, $|d^\ell(v,\bar{v})|\leq |u''|$. Since
	$|d^\ell(u,\bar{u})|=1+|d^\ell(v,\bar{v})|$ (note that $\bar{u}\equiv\bar{v}$), we have that $|d^\ell(u,\bar{u})|\leq |u''|+1=|eu''|$, as required.
\end{proof}

We will also need the following right-left analog of Lemma \ref{left edges}(2). 

\begin{lemma}\label{right der}
	Let $\mathcal P=\la E\mid R\ra$ be the core rewriting system of some subgroup of $F$ and let $\rho$ be the distinguished edge of the core.
	Let $\mathcal P'=\la E\mid R'=R\cup R_\iota\cup R_\tau\ra$ be the semi-completion of $\mathcal P$ (with respect to some fixed well order on $E$). Let $u\equiv wv$ be a right divisor of $\rho$ in $\mathcal P'$, where $v$ is the longest suffix of $u$ that is reduced over $\mathcal P'$. 	
	Then $|d^r(u,\bar{u})|\leq|w|$.
\end{lemma}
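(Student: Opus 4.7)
The plan is to mirror the proof of Lemma \ref{left edges}, first establishing the right-left analog of Lemma \ref{left edges}(1) and then inducting on the length of the non-reduced prefix $w$. Specifically, if $u$ is a non-reduced right divisor of $\rho$, write $u\equiv w'ev$ where $v$ is the longest reduced suffix of $u$ and $e\in E$ is the last letter of $w$ (so $w\equiv w'e$). Since $v$ is a reduced right divisor of $\rho$, by Lemma \ref{right divisor} we have $v\equiv \tau_{v_-}\equiv\tau_{e_+}$. Because $ev$ is not reduced (by maximality of $v$), the word $e\tau_{e_+}$ must differ from $\tau_{e_-}$, so $e\tau_{e_+}\to\tau_{e_-}$ is a rewriting rule in $R_\tau$. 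Hence $(w',e\tau_{e_+}\to\tau_{e_-},\tau_\tau)$ is an outgoing positive edge of $u$ in $K(\mathcal{P}')$.

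Next I would verify that this edge is in fact the unique outgoing principal right edge of $u$ by checking the right-left analog of the three conditions in Definition \ref{df:principal}: every proper suffix of $e\tau_{e_+}$ is reduced (being a proper suffix of the reduced word $\tau_{e_-}$'s associated path -- more precisely, proper suffixes are suffixes of $\tau_{e_+}$, which is reduced by Lemma \ref{term}(2)); $e\tau_{e_+}$ is the longest prefix of $ev$ that is the left-hand side of some relation; and $\tau_{e_-}$ is the ShortLex smallest word equivalent to $e\tau_{e_+}$ over $\mathcal{P}'$, since by Lemma \ref{right divisor} it is the unique reduced word equivalent to $e\tau_{e_+}$.

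Now apply the principal right edge above: the terminal vertex is $w'\tau_{e_-}$. Since $\tau_{e_-}$ is reduced by Lemma \ref{term}(2), the longest reduced suffix of $w'\tau_{e_-}$ contains $\tau_{e_-}$, so the non-reduced prefix of $w'\tau_{e_-}$ (in the sense of the decomposition in the statement of the lemma) has length at most $|w'|=|w|-1$. By induction on $|w|$, the right derivation from $w'\tau_{e_-}$ to its unique reduced equivalent $\bar{u}$ has length at most $|w|-1$. Since $d^r(u,\bar u)$ consists of the single principal right edge constructed above followed by $d^r(w'\tau_{e_-},\bar u)$, we conclude
\[
|d^r(u,\bar u)|\leq 1+(|w|-1)=|w|,
\]
as required. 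The main potential pitfall is the verification of the three conditions defining a principal right edge -- particularly that $e\tau_{e_+}$ is the longest prefix of $ev$ that is a left-hand side of a relation in $R'$ -- but this follows cleanly from the fact that $\tau_{e_+}\equiv v$ is reduced, so no proper prefix $e\tau'$ (with $\tau'$ a proper prefix of $\tau_{e_+}$) can be the left-hand side of a rule in $R_\tau$, and the rules in $R\cup R_\iota$ cannot apply inside $ev$ because they would then also apply to $v$, contradicting its reducedness.
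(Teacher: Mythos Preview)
Your proof is correct and is exactly the right--left analog of the proof of Lemma~\ref{left edges}, which is precisely what the paper intends (the paper gives no separate proof and simply states the lemma as the analog). One small remark: the ``potential pitfall'' you worry about in the last paragraph is not actually an issue, because in your principal right edge $(w',\,e\tau_{e_+}\to\tau_{e_-},\,\tau_\tau)$ the third coordinate $\tau_\tau$ is the empty word, so $\ell v\equiv e\tau_{e_+}$ and condition (2) of the right analog of Definition~\ref{df:principal} is trivially satisfied ($\ell$ is the whole word $\ell v$, hence automatically its longest prefix).
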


Note that if $\mathcal P=\la E\mid R\ra$ is finite, then the sets $R_\iota$ and $R_\tau$ are finite (and hence, the semi-completion $\mathcal P'$ is finite). In fact, we have the following.

\begin{lemma}\label{euler}
	Let $H$ be a closed subgroup of $F$ with a finite core $\mathcal C(H)$. Let $\rho$ be the distinguished edge of the core and let $\mathcal P'=\la E\mid R'=R\cup R_\iota\cup R_\tau\ra$ be the semi-completion of $\mathcal P$ (with respect to some appropriate well-order on $E$).
	Let $n$ be the number of inner vertices in the core $\mathcal C(H)$ (i.e., vertices other than $\iota$ and $\tau$). Let $m$ be the number of inner edges in the core (i.e., edges other than $\rho$) and let $f$ be the number of cells in the core. Then $|R_\iota|=|R_\tau|=m-n$. Hence, $|R'|\leq 2(m-n)+f$.
\end{lemma}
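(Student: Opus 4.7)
The plan is to identify a natural spanning arborescence of the core which puts the rewriting rules in $R_\iota$ into explicit bijection with the non-tree edges. First, for every non-$\iota$ vertex $\nu$ of $\mathcal C(H)$ the canonical path $\iota_\nu$ is non-empty; let $e_\nu$ denote its final edge. I claim $\iota_\nu \equiv \iota_{(e_\nu)_-}\cdot e_\nu$ as words over $E$. Indeed, $\iota_\nu$ is ShortLex-minimal among directed paths from $\iota$ to $\nu$, and if its length-$(|\iota_\nu|-1)$ prefix were not itself ShortLex-minimal among paths from $\iota$ to $(e_\nu)_-$, then appending $e_\nu$ to a smaller such prefix would yield a strictly smaller path in $P_{\iota,\nu}$, contradicting the minimality of $\iota_\nu$. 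This uses only the stability of the ShortLex order under right-appending a fixed letter.

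Let $T=\{e_\nu : \nu\neq\iota\}$. The edges in $T$ are pairwise distinct (since $(e_\nu)_+=\nu$), so $|T|=n+1$, the number of non-$\iota$ vertices. In particular $e_\tau = \rho$, because $\rho$ is the unique edge from $\iota$ to $\tau$ and is labeled by the minimal letter, so $\iota_\tau\equiv\rho$. The key claim is that for any edge $e$ of the core, the equality of words $\iota_{e_-}e \equiv \iota_{e_+}$ holds if and only if $e\in T$. The reverse direction is the identity displayed above applied to $\nu=e_+$. For the forward direction, compare last letters: since $e_+\neq\iota$ (every edge has a non-$\iota$ target, by Remark \ref{core structure}), the word $\iota_{e_+}$ is non-empty and its last letter is $e_{e_+}$ by construction, so $\iota_{e_-}e\equiv\iota_{e_+}$ forces $e=e_{e_+}\in T$.

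Hence the rules in $R_\iota$ are in bijection with the edges of the core not lying in $T$, giving $|R_\iota| = (m+1)-(n+1) = m-n$. A completely symmetric argument applied to the canonical paths $\tau_\nu$ (which form an in-arborescence rooted at $\tau$, again containing $\rho$ as the tree edge incident to $\iota$) yields $|R_\tau|=m-n$. Finally, $|R|\leq f$ since each cell $\pi$ contributes a single rule $\bott(\pi)\to\topp(\pi)$. Combining these, $|R'|\leq |R|+|R_\iota|+|R_\tau|\leq f+2(m-n)$, as required. There is no serious obstacle here; the only delicate point is the passage from minimality of $\iota_\nu$ to minimality of its length-$(|\iota_\nu|-1)$ prefix, which is handled by the stability observation in the first paragraph.
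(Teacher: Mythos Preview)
Your proof is correct and follows essentially the same approach as the paper: both identify that $\iota_{e_-}e\equiv\iota_{e_+}$ holds precisely when $e$ is the final edge of $\iota_{e_+}$, and count such edges as $n+1$. The only cosmetic difference is that you establish the prefix identity $\iota_\nu\equiv\iota_{(e_\nu)_-}e_\nu$ directly via ShortLex stability, whereas the paper invokes Lemma~\ref{left divisor} (prefixes of $\iota_\nu$ are reduced, hence equal to the canonical path to their endpoint); these are equivalent.
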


\begin{proof}
	Let us consider the set $R_\iota$.
	The number of rewriting rules in $R_\iota$ is equal to the number of edges $e$ such that $\iota_{e_-}e\not\equiv \iota_{e_+}$.
	Note that the total number of edges in the core is $m+1$. Hence, it suffices to prove that the number of edges $e$ for which $\iota_{e_-}e\equiv \iota_{e_+}$ is equal to $n+1$.
	
	Note that for any edge $e$, we have $\iota_{e_-}e\equiv \iota_{e_+}$ if and only if $e$ is the last edge of $\iota_{e_+}$. Indeed, if $e$ is the last edge of $\iota_{e_+}$ then $\iota_{e_+}$ has a prefix $u$ such that $\iota_{e_+}\equiv ue$. Note that $u_-=\iota$ and $u_+=e_-$ and therefore, $\iota_{e_-}\sim u$ over $\mathcal P'$. Since $u$ must be reduced (and $u$ is a left divisor of $\rho$), by Lemma \ref{left divisor}, we have $u\equiv\iota_{e_-}$ and hence $\iota_{e_+}\equiv ue\equiv \iota_{e_-}e$ as required. The opposite direction is clear.
	
	Note that the number of edges $e$ for which $e$ is the last edge of $\iota_{e_+}$ is equal to the number of vertices $\nu\neq \iota$ (for which the path $\iota_\nu$ is not empty and in particular, has a terminal edge).  
	
	Hence, the number of edges $e$ for which $\iota_{e_-}e\equiv \iota_{e_+}$ is equal to the number of vertices $\nu\neq \iota$, which is $n+1$. Hence, $|R_\iota|=(m+1)-(n+1)=m-n$ as required.
	
	The proof for $R_\tau$ is similar. Since $|R|=f$, we have that $|R'|\leq f+2(m-n)$.
\end{proof}

\begin{lemma}\label{gen_set}
	Let $\mathcal P=\la E\mid R\ra$ be a  core rewriting system with distinguished edge $\rho$. Let $\mathcal P'=\la E\mid R'=R\cup R_\iota \cup R_\tau\ra $ be the semi-completion of $\mathcal P$ (associated with some appropriate linear order on $E$). Then the set
	$$X=\{(\iota_{\ell_-},\ell\to r,\tau_{r_+})\mid \ell\to r\in R'\setminus R_\iota\}$$
	is a generating set of the fundamental group $\pi_1(K(\mathcal P'),\rho)$ of the squire complex of $\mathcal P'$.
\end{lemma}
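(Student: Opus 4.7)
The plan is to deduce the lemma from Theorem \ref{X} applied to the semi-complete rewriting system $\mathcal P'$. Theorem \ref{X} produces a generating set $X_{\mathrm{full}} = \{[e] : e \in B\}$ of $\pi_1(K(\mathcal P'), \rho)$, where $B$ consists of the positive edges $(u, \ell \to r, v)$ in the connected component of $\rho$ such that $u, v$ are reduced over $\mathcal P'$ and $e$ is not a principal left edge. It suffices to show that $X_{\mathrm{full}} \subseteq X$.

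First I would pin down the shape of every edge in $B$. If $e = (u, \ell \to r, v) \in B$, then $u \ell v \sim \rho$, so by Theorem \ref{paths} the reduced word $u$ is a left divisor of $\rho$, and Lemma \ref{left divisor} forces $u \equiv \iota_{u_+} \equiv \iota_{\ell_-}$. Symmetrically, Lemma \ref{right divisor} gives $v \equiv \tau_{v_-} \equiv \tau_{\ell_+} = \tau_{r_+}$, where the last equality uses that every rule $\ell \to r$ in $R' = R \cup R_\iota \cup R_\tau$ satisfies $\ell_+ = r_+$ (immediate from inspection of the three definitions). Hence $B \subseteq \{(\iota_{\ell_-}, \ell \to r, \tau_{r_+}) : \ell \to r \in R'\}$.

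The key step is to eliminate the rules coming from $R_\iota$. Fix $\ell \to r \in R_\iota$, say $\ell \equiv \iota_{e_-} e$ and $r \equiv \iota_{e_+}$ with $\iota_{e_-} e \not\equiv \iota_{e_+}$; then $\iota_{\ell_-} = \iota_\iota$ is the empty path and the candidate edge is $(\iota_\iota, \iota_{e_-} e \to \iota_{e_+}, \tau_{e_+})$, originating from the word $u_0 = \iota_{e_-} e \tau_{e_+}$. I would verify that $\iota_{e_-}$ is the longest reduced prefix of $u_0$: every prefix of $\iota_{e_-}$ is itself reduced, for if some prefix $p$ rewrote to a ShortLex-smaller $p'$, substituting $p'$ back into $\iota_{e_-}$ would yield a path in $P_{\iota, e_-}$ strictly smaller in ShortLex than $\iota_{e_-}$, contradicting its defining minimality; while $\iota_{e_-} e$ is non-reduced by hypothesis. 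Lemma \ref{left edges}(1) then identifies our candidate edge as the unique outgoing principal left edge of $u_0$, so it does not lie in $B$.

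Combining these observations, $B \subseteq \{(\iota_{\ell_-}, \ell \to r, \tau_{r_+}) : \ell \to r \in R' \setminus R_\iota\}$, so $X_{\mathrm{full}} \subseteq X$ and $X$ generates $\pi_1(K(\mathcal P'), \rho)$. The main obstacle is the principal-left-edge check in the key step: one must handle the three types of rules in $R'$ uniformly enough that Lemmas \ref{left divisor}, \ref{right divisor} and \ref{left edges}(1) all apply, and one must keep track of the prefix-closure of the ShortLex-minimal paths $\iota_\nu$. The rest is bookkeeping from the earlier lemmas.
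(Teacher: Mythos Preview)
Your proof is correct and follows essentially the same route as the paper: both apply Theorem \ref{X} to $\mathcal P'$ and then identify the edges in $B$ as exactly those of the form $(\iota_{\ell_-},\ell\to r,\tau_{\ell_+})$ with $\ell\to r\notin R_\iota$, via Lemmas \ref{left divisor}, \ref{right divisor} and \ref{left edges}(1). The only difference is that the paper establishes the full equality $B=\{(\iota_{\ell_-},\ell\to r,\tau_{r_+}):\ell\to r\in R'\setminus R_\iota\}$ (proving also that edges with $\ell\to r\notin R_\iota$ are \emph{not} principal left), whereas you prove only $B\subseteq X$, which is all that is needed for the generating-set claim.
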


\begin{proof}
	Since $\mathcal P'$ is a semi-complete rewriting system, Theorem \ref{X} applies to it.
	We claim that the set $X$ in this theorem and the generating set $X$ from Theorem \ref{X} coincide. In other words, we claim that the set
		$$X=\{(\iota_{\ell_-},\ell\to r,\tau_{r_+})\mid \ell\to r\in R'\setminus R_\iota\}$$
	is the set of all positive edges $(u,\ell\to r,v)$ of $K(\mathcal P')$ which satisfy the following conditions
	\begin{enumerate}
		\item[(1)] $u\ell v\sim\rho$ in $\mathcal P'$.	
		\item[(2)] $u$ and $v$ are reduced words over $\mathcal P'$.
		\item[(3)] The edge $e=(u,\ell\to r,v)$ is not a  principal left edge.
	\end{enumerate}
	Let $e=(u,\ell\to r,v)$ be a positive edge of $K(\mathcal P')$ (so that $\ell\to r\in R'$). Then $u\ell v\sim \rho$ in $\mathcal P'$ if and only if $u\ell v$ is a directed path in the core
 from $\iota$ to $\tau$. 
	Note that in that case, we must have $u_-=\iota$, $u_+=\ell_-$, $v_-=\ell_+$ and $v_+=\tau$. Hence, by Lemmas \ref{left divisor} and \ref{right divisor}, $u$ and $v$ are reduced if and only if  $u\equiv \iota_{\ell_-}$ and $v\equiv \tau_{\ell_+}$. It follows that the tuples which satisfy the first two conditions above are the tuples of the form $(\iota_{\ell_-},\ell\to r, \tau_{\ell_+})$, where $\ell\to r\in R'$.
	To complete the proof, it suffices to prove that a tuple of the form $(\iota_{\ell_-},\ell\to r, \tau_{\ell_+})$ satisfies the third condition above, if and only if $\ell\to r$ does not belong to $R_\iota$.
	Indeed, if $\ell\to r$ belongs to $R_\iota$, then for some edge $e$ of the core,
	$\ell\equiv \iota_{e_-}e$ and $r\equiv \iota_{e_+}$. Then, the edge
	$$(\iota_{\ell_-},\ell\to r, \tau_{\ell_+})=(\iota_\iota,\iota_{e_-}e\to\iota_{e_+},\tau_{e_+}).$$
	By lemma \ref{left edges}, this is the outgoing principal left edge of $\iota_{e_-}e\tau_{e_+}$ (note that $\iota_{e_-}e$ is necessarily non-reduced here).
	In the other direction, assume that  $(\iota_{\ell_-},\ell\to r, \tau_{\ell_+})$ is a  principal left edge. Let $w\equiv \iota_{\ell_-}\ell\tau_{\ell_+}$ and note that $w\sim \rho$ over $\mathcal P'$. Since $(\iota_{\ell_-},\ell\to r, \tau_{\ell_+})$ is the outgoing  principal left edge of $w$, by Lemma \ref{left edges}, the rewriting rule $\ell\to r$ must belong to $R_\iota$.
	
	Hence, the generating set of $\pi_1(K(\mathcal P'),\rho)$ constructed in Theorem \ref{X}, coincides with the set $X$ defined in this theorem.
\end{proof}



\begin{cy}\label{finitely generated}
	Let $H$ be a closed subgroup of $F$ with a finite core $\mathcal C(H)$ and let $\rho$ be the distinguished edge of the core.
	Let $\mathcal P=\la E\mid R\ra$ be the core presentation of $H$ and let $\mathcal P'=\la E\mid R'=R\cup R_\iota\cup R_\tau\ra$ be the semi-completion of $\mathcal P$ (with respect to some appropriate linear-order on $E$).
	
Then the diagram group $DG(\mathcal P',\rho)$ is finitely generated. In fact, it has a generating set of size at most $m-n+f$, where $n$ is the number of inner vertices in the core $\mathcal C(H)$, $m$ is the number of inner edges in the core and $f$ is the number of cells in the core.
\end{cy}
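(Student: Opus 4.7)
The plan is to read off the bound directly from the two main lemmas of this section, together with the isomorphism between diagram groups and fundamental groups of Squier complexes. Since $\mathcal P'$ is finite (as the core $\mathcal C(H)$ is finite, so $R_\iota$ and $R_\tau$ are finite by Lemma \ref{euler}), and since by the theorem recalled in Section \ref{sec:diagrams} we have $DG(\mathcal P',\rho)\cong\pi_1(K(\mathcal P'),\rho)$, it suffices to exhibit a generating set of $\pi_1(K(\mathcal P'),\rho)$ of the claimed cardinality.

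First, I would invoke Lemma \ref{gen_set}, which applies because $\mathcal P'$ is semi-complete (this was established in Lemmas \ref{term}, \ref{left divisor}, \ref{right divisor}). It yields the explicit generating set
\[
X=\{(\iota_{\ell_-},\ell\to r,\tau_{r_+})\mid \ell\to r\in R'\setminus R_\iota\}
\]
of $\pi_1(K(\mathcal P'),\rho)$. Distinct rewriting rules clearly produce distinct tuples, so $|X|=|R'\setminus R_\iota|$.

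Next, I would apply Lemma \ref{euler}: it says $|R_\iota|=|R_\tau|=m-n$, while the definition of $R$ gives $|R|=f$. Since $R'=R\cup R_\iota\cup R_\tau$, we have
\[
|R'\setminus R_\iota|\leq |R|+|R_\tau|\leq f+(m-n)=m-n+f.
\]
Combining this with the previous step gives a generating set of $\pi_1(K(\mathcal P'),\rho)$, hence of $DG(\mathcal P',\rho)$, of size at most $m-n+f$, which proves the corollary.

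There is essentially no obstacle to overcome at this stage — all of the substance is already contained in Lemma \ref{gen_set} (which identifies the ``correct'' generating set produced by Theorem \ref{X} in our specific setting) and Lemma \ref{euler} (which carries out the combinatorial count using the structure of the core and the fact that each vertex $\nu\neq\iota$ contributes exactly one edge $e$ with $\iota_{e_-}e\equiv\iota_{e_+}$). The corollary is a direct tally.
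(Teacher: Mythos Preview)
Your proposal is correct and follows essentially the same approach as the paper: invoke the isomorphism $DG(\mathcal P',\rho)\cong\pi_1(K(\mathcal P'),\rho)$, apply Lemma \ref{gen_set} to get the generating set $X$ indexed by $R'\setminus R_\iota$, and then bound $|R'\setminus R_\iota|\le |R|+|R_\tau|\le f+(m-n)$ via Lemma \ref{euler}. Your write-up is in fact slightly more careful than the paper's (which contains a couple of typos), but the argument is identical.
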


\begin{proof}
	Recall that the diagram group $DG(\mathcal P',\rho)$ is isomorphic to the fundamental group $\pi_1(K(\mathcal P),\rho)$. Hence, by Lemma \ref{gen_set}, it has a generating set of size $|R\setminus R_\iota|\leq |R\cup R_\tau|\leq f+m-n$, by Lemma \ref{euler}.
\end{proof}

Lemma \ref{gen_set}  shows that if the core of $H$ is finite then the diagram group $DG(\mathcal P',\rho)$ is finitely generated.
To show that $H\cong DG(\mathcal P,\rho)$ is finitely generated when the core of $H$ is finite, we make use of the following lemma. The lemma is a special case of \cite[Theorem 7.7]{GuSa}.

\begin{lemma}[\cite{GuSa}]\label{retract}
		Let $\mathcal P=\la E\mid R\ra$ and $\mathcal P'=\la E\mid R'\ra$ be string rewriting systems such that $R\subseteq R'$ and such that for every relation $\ell\to r\in R'$ the words $\ell$ and $r$ are equivalent over $\mathcal P$. Let $\rho\in E^*$.  Then the diagram group $DG(\mathcal P,\rho)$ is a retract of $DG(\mathcal P',\rho)$.
	\end{lemma}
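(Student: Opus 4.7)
The plan is to construct two group homomorphisms $i : DG(\mathcal P,\rho)\to DG(\mathcal P',\rho)$ and $\phi : DG(\mathcal P',\rho)\to DG(\mathcal P,\rho)$ satisfying $\phi\circ i=\mathrm{id}$. The inclusion $i$ is immediate: since $R\subseteq R'$, every $(\rho,\rho)$-diagram over $\mathcal P$ is by construction a $(\rho,\rho)$-diagram over $\mathcal P'$, and this assignment clearly respects the three diagram operations (addition, multiplication, inversion) and sends reduced diagrams to reduced diagrams, so it defines a homomorphism.

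The retraction $\phi$ is built by choosing, for each relation $\ell\to r\in R'\setminus R$, a fixed $(\ell,r)$-diagram $\Delta_{\ell,r}$ over $\mathcal P$; such a diagram exists because $\ell\sim r$ over $\mathcal P$ by hypothesis, and this is exactly the criterion recalled after the definition of diagrams. For relations $\ell\to r\in R$ we set $\Delta_{\ell,r}$ to be the atomic diagram $\Delta(\ell\to r)$ itself. Given a diagram $\Delta$ over $\mathcal P'$, we replace each positive cell of type $\ell\to r$ in $\Delta$ by $\Delta_{\ell,r}$, and each negative cell by $\Delta_{\ell,r}^{-1}$. Because $\Delta_{\ell,r}$ is an $(\ell,r)$-diagram, the composition structure of $\Delta$ is preserved: top and bottom paths match, so the replacement yields a well-defined $(\rho,\rho)$-diagram over $\mathcal P$. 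This gives a set-theoretic map from diagrams over $\mathcal P'$ to diagrams over $\mathcal P$, and it is easily seen to commute with the addition, multiplication, and inversion operations.

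The main technical point, and the one I expect to require the most care, is showing that $\phi$ descends to a well-defined map on equivalence classes of diagrams (and hence on the diagram groups). It suffices to verify that dipole insertions and eliminations in $\mathcal P'$ are sent to equivalent diagrams over $\mathcal P$. A dipole in $\Delta$ consists of a positive $(\ell,r)$-cell followed by its inverse; under $\phi$ this becomes $\Delta_{\ell,r}\circ\Delta_{\ell,r}^{-1}$, which is equivalent to the trivial diagram $\varepsilon(\ell)$ by successive elimination of dipoles (this is the standard fact that $\Delta\circ\Delta^{-1}$ is always equivalent to the trivial diagram on its top path, see \cite{GuSa}). Hence eliminating the dipole in $\Delta$ and eliminating the corresponding dipoles in $\phi(\Delta)$ yield equivalent diagrams, so $\phi$ is well-defined on equivalence classes. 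Combined with the fact that $\phi$ respects multiplication and inversion, this makes $\phi$ a group homomorphism $DG(\mathcal P',\rho)\to DG(\mathcal P,\rho)$.

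Finally, to verify $\phi\circ i=\mathrm{id}_{DG(\mathcal P,\rho)}$, let $\Delta\in DG(\mathcal P,\rho)$. Then every cell of $\Delta$ corresponds to a relation in $R$, and by our choice $\Delta_{\ell,r}=\Delta(\ell\to r)$ for $\ell\to r\in R$, so $\phi$ replaces each cell by itself. Hence $\phi(i(\Delta))=\Delta$, and $DG(\mathcal P,\rho)$ is a retract of $DG(\mathcal P',\rho)$.
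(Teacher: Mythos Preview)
Your proof is correct and follows essentially the same approach as the paper, which cites \cite[Theorem 7.7]{GuSa} for this lemma and then describes the retraction exactly as you do: fix an $(\ell,r)$-diagram $\Delta(\ell\to r)$ over $\mathcal P$ for each rule of $\mathcal P'$ (taking the atomic diagram when $\ell\to r\in R$), and replace cells accordingly. You supply more detail than the paper does here, in particular the verification that $\phi$ is well-defined under dipole elimination, which is the right thing to check and your argument for it is sound.
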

	
	Note that the retract map (described in the proof of \cite[Theorem 7.7]{GuSa}) can be defined as follows. Let $\ell\to r\in \mathcal P'$ be a rewriting rule of the completion $\mathcal P'$. By definition, there is a derivation over $\mathcal P$ from $\ell$ to $r$. Therefore, there is an $(\ell,r)$-diagram over $\mathcal P$. For each rewriting rule $\ell\to r$ of $\mathcal P'$ we fix a diagram $\Delta(\ell\to r)$ over $\mathcal P$, where if $\ell\to r$ is a rewriting rule of $\mathcal P$ we let $\Delta(\ell\to r)$ be the atomic $(\ell,r)$-diagram over $\mathcal P$.
	Now, given a $(\rho,\rho)$-diagram $\Delta$ over $\mathcal P'$, one can replace each $(\ell,r)$-cell of $\Delta$ by $\Delta(\ell\to r)$ and each $(r,\ell)$-cell of $\Delta$ by $\Delta(\ell\to r)^{-1}$ to obtain a $(\rho,\rho)$-diagram over $\mathcal P$. This mapping from $DG(\mathcal P',\rho)$ to $DG(\mathcal P,\rho)$ is a retract. Hence, we get the following.
	
	\begin{theorem}
	Let $H$ be a closed subgroup of $F$ with a finite core $\mathcal C(H)$.
Let $\mathcal P=\la E\mid R\ra$ be the core presentation of $H$ and let $\rho$ be the distinguished edge of the core. 
Then $H\cong DG(\mathcal P,\rho)$ is finitely generated. In fact, it has a generating set of size at most $m-n+f$, where $n$ is the number of inner vertices in the core $\mathcal C(H)$, $m$ is the number of inner edges in the core and $f$ is the number of cells in the core.
	\end{theorem}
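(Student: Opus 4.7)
The plan is to combine Corollary \ref{finitely generated} with the retraction lemma (Lemma \ref{retract}) to transfer the finite generating set from the semi-completion $DG(\mathcal P',\rho)$ back down to $DG(\mathcal P,\rho)$.

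First I would verify that the hypotheses of Lemma \ref{retract} are satisfied for $\mathcal P$ and its semi-completion $\mathcal P'=\la E\mid R'=R\cup R_\iota\cup R_\tau\ra$. The inclusion $R\subseteq R'$ holds by construction. For each added rule $\iota_{e_-}e\to\iota_{e_+}$ in $R_\iota$, both sides are directed paths in the core from $\iota$ to $e_+$, so by Theorem \ref{paths}(3) they are equivalent over $\mathcal P$; the analogous statement for $R_\tau$ holds via Theorem \ref{paths}(4). Thus Lemma \ref{retract} applies and $DG(\mathcal P,\rho)$ is a retract of $DG(\mathcal P',\rho)$.

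Next I would invoke Corollary \ref{finitely generated}, which gives a generating set of $DG(\mathcal P',\rho)$ of cardinality at most $m-n+f$. Since a retract of a finitely generated group is itself finitely generated — the image of any generating set under the retraction is a generating set of the retract, because the retraction is surjective — it follows that $H\cong DG(\mathcal P,\rho)$ is generated by the images under the retract map of the $m-n+f$ generators provided by Corollary \ref{finitely generated}. This yields the claimed bound on the generating set size.

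There is no serious obstacle: once Corollary \ref{finitely generated} and Lemma \ref{retract} are in hand, the argument is essentially a one-step composition. The only point worth stating explicitly is the (standard) fact that the image of a generating set under a surjective homomorphism generates the target, applied here to the retraction $DG(\mathcal P',\rho)\to DG(\mathcal P,\rho)$. Thus the theorem reduces to the verification of the semi-completion hypothesis and the reading off of the size bound from Lemma \ref{euler}.
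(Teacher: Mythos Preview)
Your proposal is correct and is essentially the same as the paper's argument: the paper also derives the theorem immediately from Corollary \ref{finitely generated} together with Lemma \ref{retract}, noting (as you do) that the image of the generating set $X$ under the retraction $\theta$ generates $DG(\mathcal P,\rho)$. Your explicit verification of the hypotheses of Lemma \ref{retract} via Theorem \ref{paths}(3),(4) is a welcome detail that the paper leaves implicit.
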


\begin{rk}\label{Y}
	In fact, we have described an algorithm for finding a finite generating set of $DG(\mathcal P,\rho)$, when the core presentation $\mathcal P=\la E\mid R\ra$ is finite:
	\begin{enumerate}
		\item[(1)] Fix a linear order on the set of edges $E$, such that the distinguished edge $\rho$ is the smallest edge.
		\item[(2)] Construct the sets of rewriting rules $R_\iota$ and $R_\tau$ to get the semi-completion $\mathcal P'$ of $\mathcal P$. (As noted above, step $1$ completely determines the semi-completion $\mathcal P'$).
		\item[(3)] The set $X$ from Lemma \ref{gen_set} is a generating set of $K(\mathcal P',\rho)\cong DG(\mathcal P',\rho)$. (We will denote the corresponding generating set of $ DG(\mathcal P',\rho)$ by $X$ as well.)
		\item[(4)] Apply a retract map $\theta$  from $DG(\mathcal P',\rho)$ to $DG(\mathcal P,\rho)$ (as described above) to get the finite generating set $Y=\theta(X)$ of $DG(\mathcal P,\rho)$.
	\end{enumerate}	
\end{rk}

	Note that if $H$ is a finitely generated subgroup of $F$ then by construction, its core is finite. Hence, we have the following

\begin{cy}
	Let $H$ be a closed subgroup of $F$. Then $H$ is finitely generated if and only if its core is finite.
\end{cy}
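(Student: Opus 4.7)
The plan is to derive both directions of the equivalence as direct consequences of results already established in the paper, with no new machinery required. The direction $\mathcal C(H)$ finite $\Rightarrow$ $H$ finitely generated is literally the content of the theorem immediately preceding the statement: when $H$ is a closed subgroup and its core has $n$ inner vertices, $m$ inner edges, and $f$ cells, that theorem exhibits an explicit generating set for $H \cong DG(\mathcal P, \rho)$ of size at most $m - n + f$. So for this direction I simply invoke the preceding theorem verbatim.

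For the converse, I fix a finite generating set of $H$ consisting of reduced diagrams $\Delta_1, \ldots, \Delta_k$ and follow the construction in Definition \ref{core}. First, identify the top and bottom edges of all the $\Delta_i$ to a single distinguished edge $\rho$, producing a bouquet of spheres; since each $\Delta_i$ is a finite planar diagram, this bouquet has finitely many vertices, edges, and cells. Second, iteratively apply foldings: whenever two cells share their top (respectively bottom) boundary component, identify them and all of their incident edges and vertices. Each folding step can only weakly decrease the counts of vertices, edges, and cells, so the process terminates after finitely many steps and yields a finite directed $2$-complex $\mathcal C(H)$. This is exactly what is recorded in Remark \ref{fin_core}.

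The only subtle point, and the only place that might be mistaken for a real obstacle, is ensuring that the object produced by starting from one finite generating set is genuinely the invariant $\mathcal C(H)$ of $H$, rather than some presentation-dependent complex. This independence of the generating set and the folding order is proved in \cite{GS} and is cited immediately after Definition \ref{core}, so it requires no new argument here. Consequently no substantive obstacle arises, and the corollary is an immediate combination of the preceding theorem with Remark \ref{fin_core}.
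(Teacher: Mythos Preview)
Your proposal is correct and follows essentially the same approach as the paper: the paper simply notes that a finitely generated subgroup has a finite core by construction (i.e., Remark \ref{fin_core}) and derives the corollary from that together with the preceding theorem, which is exactly what you do.
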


In particular, by Remark \ref{fin_core}, if $H$ is a finitely generated subgroup of $F$, then the closure of $H$ is also finitely generated.

%

The next lemma is the main ingredient in the proof that 
 every finitely generated closed subgroup of $F$ has linear distortion in $F$.

\begin{lemma}\label{N(delta)}
	Let $H$ be a finitely generated closed subgroup of $F$. Let $\mathcal P=\la E\mid R\ra$ be the core presentation of $H$ and let $\rho$ be the distinguished edge of the core.
	Let $\mathcal P'=\la E\mid R'=R\cup R_\iota\cup R_\tau\ra$ be the semi-completion of $\mathcal P$ (with respect to some appropriate fixed linear order). Let $X$ be the generating set of $\pi_1(K(\mathcal P'),\rho)\cong DG(\mathcal P',\rho)$ from Lemma \ref{gen_set} and let $Y=\theta(X)$ be the generating set of $DG(\mathcal P,\rho)$ obtained by applying a retract from $DG(\mathcal P',\rho)$ to $DG(\mathcal P,\rho)$ to the generating set $X$.
	Then for any diagram $\Delta$ in $DG(\mathcal P,\rho)$, the word length of $\Delta$ with respect to the generating set $Y$ is at most $3\mathcal N(\Delta)$, where $\mathcal N(\Delta)$ is the number of cells in $\Delta$.
\end{lemma}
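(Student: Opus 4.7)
The plan is to bound $|\Delta|_X$ in the larger diagram group $DG(\mathcal P',\rho)$ and then transfer the bound through the retract $\theta$. Since $\theta$ is a group homomorphism with $\theta(X)=Y$, and since $\theta$ fixes $\Delta$ pointwise (as $\Delta$ is already a diagram over $\mathcal P$, and $\theta$ restricted to such diagrams replaces each $R$-cell by the atomic $R$-cell, i.e.\ by itself), any expression $\Delta=x_1^{\epsilon_1}\cdots x_k^{\epsilon_k}$ with $x_j\in X$ in $DG(\mathcal P',\rho)$ yields, after applying $\theta$, an expression $\Delta=\theta(x_1)^{\epsilon_1}\cdots\theta(x_k)^{\epsilon_k}$ of the same length in $Y\cup Y^{-1}$ in $DG(\mathcal P,\rho)$. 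Hence $|\Delta|_Y\le |\Delta|_X$, and it suffices to show $|\Delta|_X\le 3\mathcal N(\Delta)$ inside $DG(\mathcal P',\rho)\cong\pi_1(K(\mathcal P'),\rho)$.

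To do this, I would use Lemma \ref{horizontal} to decompose $\Delta=\Delta^+\circ\Delta^-$. Under the isomorphism with $\pi_1$, the subdiagram $\Delta^-$ (reducing, all $R$-cells) corresponds to a positive derivation $Q^-$ of length $m=\mathcal N(\Delta^-)=\mathcal N(\Delta)/2$ from the horizontal-path label $w$ to $\rho$; similarly $\Delta^+$ corresponds to $(Q^+)^{-1}$ for another such derivation $Q^+$. Thus $|\Delta|_X\le |[Q^+]|_X+|[Q^-]|_X$, and it is enough to prove $|[Q]|_X\le 3m$ for any positive derivation $Q=e_1\cdots e_m$ arising from the right-to-left enumeration of Definition \ref{rl}. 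For each edge $e_i=(u_i,\ell_i\to r_i,v_i)$, Corollary \ref{red} gives
\[
[e_i]=[\zeta_i]\cdot[(\overline{u_i},\ell_i\to r_i,\overline{v_i})]\cdot[\zeta_i]^{-1},\qquad \zeta_i=((u_i\ell_i)*d^r(v_i,\overline{v_i}))^{-1}.
\]
By Lemmas \ref{left divisor} and \ref{right divisor} the middle factor equals $[(\iota_{(\ell_i)_-},\ell_i\to r_i,\tau_{(r_i)_+})]$, which (as $\ell_i\to r_i\in R$) is either a generator of $X$ or a principal left edge, hence contributes at most $1$ to the $X$-length. By the argument of Lemma \ref{length}, every edge of the prefixed right derivation has reduced third coordinate, so $|[\zeta_i]|_X\le |d^r(v_i,\overline{v_i})|$. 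Summing, $|[Q]|_X\le m+2\sum_{i=1}^m |d^r(v_i,\overline{v_i})|$.

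The main obstacle is then to establish the amortized bound
\[
\sum_{i=1}^m |d^r(v_i,\overline{v_i})|\le m,
\]
which would yield $|[Q]|_X\le 3m$ and complete the proof. For this I would exploit the structure of the right-to-left enumeration: the third coordinates $v_i$ form a nested chain of suffixes of the horizontal-path word $w$, because at each step of the enumeration the rightmost active letter of $q_i$ is processed, so only one new letter is ``exposed'' on the horizontal path per cell. Since right derivations rewrite the rightmost applicable position first, the derivation $d^r(v_{i+1},\overline{v_{i+1}})$ begins by executing $d^r(v_i,\overline{v_i})$ before addressing the newly exposed prefix, and by Lemma \ref{right der} the remaining cost is controlled by that new prefix. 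A charging scheme that assigns one credit per cell of $\Delta^-$ and pays for each new reduction step only when it first appears then yields the required inequality. The analogous argument for $\Delta^+$ uses the symmetric right-to-left structure of its cells, and together the two bounds give $|\Delta|_X\le 3\mathcal N(\Delta)$, from which the lemma follows via the retract.
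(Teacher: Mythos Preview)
Your overall strategy---reduce to bounding $|\Delta|_X$ in $DG(\mathcal P',\rho)$ via the retract, split $\Delta=\Delta^+\circ\Delta^-$, express each half as a product of edge-classes, and rewrite each $[e_i]$ with Corollary~\ref{red}---is exactly the paper's. The gap is in the last step: the ``amortized'' inequality
\[
\sum_{i=1}^m |d^r(v_i,\overline{v_i})|\le m
\]
is not true, and the charging scheme you sketch cannot establish it. You correctly observe that $d^r(v_{i+1},\overline{v_{i+1}})$ begins by executing (a prefixed copy of) $d^r(v_i,\overline{v_i})$; but that means the steps of $d^r(v_j,\cdot)$ are \emph{repeated} inside every $d^r(v_i,\cdot)$ with $i\ge j$. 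Your sum counts each such step once per $i$, not once total. Concretely, if the core has a loop $b$ at a vertex $\nu$ with $\tau_\nu=c$ and $bc\to c\in R_\tau$, and the right-to-left enumeration gives $v_i=b^{\,i-2}c$ for $i\ge 2$, then $|d^r(v_i,\overline{v_i})|=i-2$ and the sum is $\sim m^2/2$. So the bound $m+2\sum_i|d^r(v_i,\overline{v_i})|$ obtained by estimating each conjugator $[\zeta_i]$ separately is only quadratic.

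What the paper does instead is \emph{telescope} the conjugators rather than bound them individually. Writing $q_i=(u_i\ell_i)*d^r(v_i,\overline{v_i})$, one has
\[
[e_1]\cdots[e_n]=[q_1]\,[\overline{e_1}]\,\bigl([q_1]^{-1}[q_2]\bigr)\,[\overline{e_2}]\cdots\bigl([q_{n-1}]^{-1}[q_n]\bigr)\,[\overline{e_n}]\,[q_n]^{-1},
\]
and the key identity (using Lemma~\ref{red_edges}(1) and the nesting $v_{i+1}\equiv s_{i+1}v_i$) is $[q_i]^{-1}[q_{i+1}]=[q_{i\to i+1}]$, where $q_{i\to i+1}=(u_{i+1}\ell_{i+1})*d^r(s_{i+1}\overline{v_i},\overline{s_{i+1}v_i})$ is the \emph{incremental} right derivation. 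By Lemma~\ref{right der} this has length at most $|s_{i+1}|$, so the total $X$-length of all the telescoped pieces is at most $|v_1|+\sum_i|s_{i+1}|+|v_n|=2|v_n|<2n$, and together with the $n$ middle factors $[\overline{e_i}]$ one gets $3n$. The moral is that the nestedness you noticed should be used to cancel adjacent conjugators in the product, not to bound their lengths summed separately.
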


\begin{proof}
	Let $\Delta$ be a $(\rho,\rho)$-diagram over $\mathcal P$. Since $\mathcal P$ is a tree rewriting system, the horizontal path of $\Delta$ separates it into two subdiagrams $\Delta^+$ and $\Delta^-$ such that $\Delta=\Delta^+\circ \Delta^-$.
	Let us denote by $u$ the label of the horizontal path of $\Delta$.
	
	One can enumerate the cells in $\Delta^+$ and in $(\Delta^-)\iv$ according to the right to left enumeration described in Definition \ref{rl}. Let $\Psi_1,\dots,\Psi_n$ and $\Phi_1,\dots,\Phi_n$ be the atomic diagrams associated with the enumerated cells (as in Definition \ref{rl}), such that $\Delta^+=\Psi_1\circ\cdots\circ \Psi_n$ and $(\Delta^-)\iv=\Phi_1\circ\cdots \Phi_n$.
	In particular, $$\Delta=(\Psi_1\circ\cdots\circ \Psi_n)\circ(\Phi_1\circ\cdots\circ\Phi_n)\iv.$$
	
	We will prove that the word length of $\Delta$, viewed as a diagram in $DG(\mathcal P',\rho)\cong \pi_1(K(\mathcal P'),\rho)$, with respect to the generating set $X$ is at most $3\mathcal N(\delta)$. (Since $\theta(\Delta)=\Delta$ and $\theta(X)=Y$ that would imply that the word length of $\Delta$ with respect to $Y$ is also at most $3\mathcal N(\Delta)$, as required.)
	
	For each $i$, let $e_i=(u_i,r_i\to\ell_i,v_i)$ be the negative edge of $K(\mathcal P')$ corresponding to the atomic diagram $\Psi_i$  and let $e_i'=(u_i',r_i'\to \ell_i',v_i')$ be the negative edge of $K(\mathcal P')$ corresponding to the atomic diagram $\Phi_i$,
	as in Definition \ref{rl} . Note that for all $i$, $\rho\sim u_i\ell_i v_i\sim u_i'\ell_i'v_i'$. Note also that the path $$p=e_1\cdots e_n\cdot(e_n')\iv\cdots (e_1')\iv$$ is a path on $K(\mathcal P')$ from the vertex $\rho$ to itself such that $\delta(p)=\Delta$ (recall that $\delta$ is the natural map from paths on $K(\mathcal P')$ to diagrams over $\mathcal P'$).
	
	Let us consider the equivalence class $[p]$ in $\pi_1(K(\mathcal P'),\rho)$. First, note that $$[p]=[e_1]\cdots[e_n]\cdot[e_n']\iv\cdots[e_1']\iv .$$
	
	We will prove that the word length of $[e_1]\cdots [e_n]$ with respect to $X$ is at most $\frac{3}{2}\mathcal N(\Delta)$. Since the same argument works for $[e_1']\cdots[e_n']$, that would imply the result.
	
	From the order the cells were enumerated we have that for each $1\leq i<n$, the word $v_i$ is a suffix of $v_{i+1}$. For each $1\leq i< n$, let $s_{i+1}$ be the prefix of $v_{i+1}$ such that $v_{i+1}\equiv s_{i+1}v_i$ (note that $s_{i+1}$ can be empty).
	
	For each $1\leq i\leq n$, by Corollary \ref{red}, we have $$[e_i]=[(u_i,r_i\to \ell_i,v_i)]=[(\bar{u}_i,r_i\to \ell_i,\bar{v}_i)]^{[((u_i\ell_i)*d^r(v_i,\bar{v}_i))\iv]}$$
	(Note that Corollary \ref{red} works also when $\ell\to r$ is replaced on both sides by $r\to\ell$, since  that amounts to taking the inverse of both sides.)
	
	For each $i=1,\dots,n$, let us denote $$q_i=(u_i\ell_i)*d^r(v_i,\bar{v}_i).$$
	
	Note that for each $1\leq i<n$, we have  $u_i\ell_i\sim u_{i+1}\ell_{i+1}s_{i+1}$ over $\mathcal P'$.
	Indeed,  $ u_{i+1}\ell_{i+1}s_{i+1}v_i\equiv u_{i+1}\ell_{i+1}v_{i+1}\sim \rho\sim u_i\ell_iv_i$. Hence, $u_{i+1}\ell_{i+1}s_{i+1}$ and $u_i\ell_i$ are both directed paths on the core from $\iota$ to $(v_i)_-$ and by Theorem \ref{paths},  $u_i\ell_i\sim u_{i+1}\ell_{i+1}s_{i+1}$ over $\mathcal P'$.
	Hence, by Lemma \ref{red_edges}(1), the following holds.
	$$[q_i]=[(u_i\ell_i)*d^r(v_i,\bar{v}_i)]=[(u_{i+1}\ell_{i+1}s_{i+1})*d^r(v_i,\bar{v}_i)]$$
	(indeed, the effect of replacing $u_i\ell_i$ by $u_{i+1}\ell_{i+1}s_{i+1}$ is equivalent to replacing the first coordinate of each edge of $(u_i\ell_i)*d^r(v_i,\bar{v}_i)$, by an equivalent word over $\mathcal P'$.) Note that
	\begin{equation*}
	\begin{split}
	[q_i] &=[(u_{i+1}\ell_{i+1}s_{i+1})*d^r(v_i,\bar{v}_i)]\\
	& =[(u_{i+1}\ell_{i+1})*(s_{i+1}*d^r(v_i,\bar{v}_i))]\\
	&=[(u_{i+1}\ell_{i+1})*d^r(s_{i+1}v_i,s_{i+1}\bar{v}_i)]
	\end{split}
	\end{equation*}

	Now, let $1\leq i<n$ and let us consider $q_{i+1}$.
	\begin{equation*}
	\begin{split}
[q_{i+1}]&=[(u_{i+1}\ell_{i+1})*d^r(v_{i+1},\overline{v_{i+1}})]\\
&=[(u_{i+1}\ell_{i+1})*d^r(s_{i+1}v_{i},\overline{s_{i+1}v_i})],
	\end{split}
	\end{equation*}
	where the last equality holds since $v_{i+1}\equiv s_{i+1}v_i$.
	
	Let us define, for each $1\leq i<n$,
	$$q_{i\to i+1}=(u_{i+1}\ell_{i+1})*d^r(s_{i+1}\bar{v}_i,\overline{s_{i+1}v_i}).$$
	(Note that since the reduced word equivalent to $s_{i+1}\bar{v}_i$ over $\mathcal P'$ is $\overline{s_{i+1}v_i}$, the right derivation $d^r(s_{i+1}\bar{v}_i,\overline{s_{i+1}v_i})$ is well defined).
	
	Since $d^r(s_{i+1}v_i,s_{i+1}\bar{v}_i)$ is the right derivation from $s_{i+1}v_i$ to $s_{i+1}\bar{v}_i$ and $d^r(s_{i+1}\bar{v}_i,\overline{s_{i+1}v_i})$ is the right derivation from $s_{i+1}\bar{v}_i$ to $\overline{s_{i+1}v_{i}}$, the concatenation
	$$d^r(s_{i+1}v_i,s_{i+1}\bar{v}_i)\cdot d^r(s_{i+1}\bar{v}_i,\overline{s_{i+1}v_i})$$
	is a right derivation from $s_{i+1}v_i$ to $\overline{s_{i+1}v_i}$. Indeed, it is a directed path in $K(\mathcal P')$ from $s_{i+1}v_i$ to $\overline{s_{i+1}v_i}$ which consists entirely of principal right edges. The uniqueness of the right derivation from $s_{i+1}v_i$ to $\overline{s_{i+1}v_i}$ implies that
		$$d^r(s_{i+1}v_i,s_{i+1}\bar{v}_i)\cdot d^r(s_{i+1}\bar{v}_i,\overline{s_{i+1}v_i})=d^r(s_{i+1}v_i,\overline{s_{i+1}v_i}).$$
		Hence,
		\begin{equation*}
		\begin{split}
		[q_i]\cdot[q_{i\to i+1}]&=[(u_{i+1}\ell_{i+1})*d^r(s_{i+1}v_i,s_{i+1}\bar{v}_i)]\cdot [(u_{i+1}\ell_{i+1})*d^r(s_{i+1}\bar{v}_{i},\overline{s_{i+1}v_i})]\\
		&=[(u_{i+1}\ell_{i+1})*(d^r(s_{i+1}v_i,s_{i+1}\bar{v}_i)\cdot d^r(s_{i+1}\bar{v}_i,\overline{s_{i+1}v_i}))]\\
		&=[(u_{i+1}\ell_{i+1})*d^r(s_{i+1}v_i,\overline{s_{i+1}v_i})]=[q_{i+1}]	
		\end{split}
		\end{equation*}
	Hence, for all $1\leq i<n$, we have
	$$[q_i\iv] [q_{i+1}]=[q_{i\to i+1}].$$
	
	Now, for each $i$, recall that
	$$[e_i]=[(u_i,r_i\to \ell_i,v_i)]=[(\bar{u}_i,r_i\to \ell_i,\bar{v}_i)]^{[((u_i\ell_i)*d^r(v_i,\bar{v}_i))\iv]}.$$
	Let us denote by $\overline{e_i}=(\bar{u}_i,r\to\ell,\bar{v}_i)$ and note that for each $i=1,\dots, n$,  $[\overline{e_i}]$ is either trivial or belongs to $X\iv$. 
	
	Now, for each $i=1,\dots,n$,
	$$[e_i]=[\overline{e_i}]^{[q_i^{-1}]}$$
	Hence,
	\begin{equation*}
	\begin{split}
	[e_1][e_2]\cdots [e_n] & =[\overline{e_1}]^{[q_1\iv]}[\overline{e_2}]^{[q_2\iv]}\cdots [\overline{e_i}]^{[q_i\iv]}
	[\overline{e_{i+1}}]^{[q_{i+1}\iv]}\cdots [\overline{e_n}]^{[q_n\iv]}\\
	& =[q_1][\overline{e_1}][q_1\iv][q_2][\overline{e_2}][q_2\iv]\cdots [q_i][\overline{e_i}][q_i\iv][q_{i+1}][\overline{e_{i+1}}][q_{i+1}\iv]\cdots [q_n][\overline{e_n}][q_n\iv]\\
	& =[q_1][\overline{e_1}][q_{1\to 2}][\overline{e_2}][q_{2\to 3}]\cdots [\overline{e_i}][q_{i\to i+1}][\overline{e_{i+1}}][q_{i+1\to i+2}]\dots [q_{(n-1)\to n}][\overline{e_n}][q_n\iv]
	\end{split}
	\end{equation*}
	Since for each $i=1,\dots,n$, $[\overline{e_i}]$ is either trivial or belongs to $X\iv$, 
	the total word length of $[\overline{e_1}],\dots,[\overline{e_n}]$ with respect to $X$ (i.e., the sum of their  word-lengths) is at most $n$. It suffices to prove that the total word-length of $[q_1],[q_{1\to 2}],\dots,[q_{(n-1)\to n}],[q_n\iv]$ with respect to $X$ is  bounded from above by $2n$.
	Indeed, in that case, the word length of $[e_1]\cdots[e_n]$ is bounded from above by $3n= \frac{3}{2}\mathcal N(\Delta)$, as required (note that $n$ is half the number of cells in $\Delta$). 

	First, note that the sum of lengths   of the directed paths $q_1,q_{1\to 2},\dots,q_{(n-1)\to n},q_n^{-1}$ on $K(\mathcal P)$ is at most $2|v_n|$. Indeed, it follows from Lemma \ref{right der}, that for each $i=1,\dots,n$, the length of the path $q_i$ satisfies
$$|q_i|=|(u_i\ell_i)*d^r(v_i,\bar{v}_i)|=|d^r(v_i,\bar{v}_i)|\leq |v_i|.$$
Similarly, for each $1\leq i<n$, by Lemma \ref{right der},
$$|q_{i\to{i+1}}|=|(u_{i+1}\ell_{i+1})*d^r(s_{i+1}\bar{v}_i,\overline{s_{i+1}v_i})|=|d^r(s_{i+1}\bar{v_i},\overline{s_{i+1}v_i})|\leq |s_{i+1}|,$$
Hence, $$|q_1|+\sum_{i=1}^{n-1}|q_{i\to{i+1}}|+|q_n\iv|\leq|v_1|+\sum_{i=1}^{n-1}{|s_{i+1}|}+|v_n|=|s_ns_{n-1}\cdots s_2v_1|+|v_n|=2 |v_n|$$
	Now, since the paths $q_1,q_{1\to 2},\dots,q_{(n-1)\to n},q_n$ are all right derivations over $\mathcal P'$, 
	by Lemma \ref{length}, the total word length of $[q_1],[q_{1\to 2}],\dots,[q_{(n-1)\to n}],[q_n\iv]$ with respect to $X$ is at most $2|v_n|$. It remains to note that the length of the word $v_n$ is smaller than $n$. Indeed, the horizontal path $u$ of $\Delta$ satisfies $u\equiv u_n\ell_nv_n$, since $e_n=(u_n,r_n\to\ell_n,v_n)$ corresponds to the last cell in $\Delta^+$ in the right to left enumeration. Since $|\ell_n|=2$, we have that $|v_n|\leq |u|-2=(n+1)-2=n-1$. Hence, the total word length of $[q_1],[q_{1\to 2}],\dots,[q_{(n-1)\to n}],[q_n\iv]$ is smaller than $2n$, as required. That completes the proof of the lemma.
\end{proof}

Recall that a finitely generated diagram group $G$ is said to have \emph{Property B} if the number of cells in a diagram $\Delta$ in $G$ is bi-Lipschitz equivalent to the word length of $\Delta$ (equivalently, if the word length of  a diagram $\Delta$ is at most linear in the number of cells in $\Delta$). It is well known that Thompson's group $F$, considered as a diagram group over $\la x\mid x^2\to x\ra$ has property $B$ (see, \cite{Burillo}).
It is an open question whether all finitely generated diagram groups satisfy property $B$ (see\cite[Question 1.6]{AGS}). As a corollary of Lemma \ref{N(delta)}, we get the following.


\begin{cy}\label{Property B}
	Let $\mathcal P=\la \Sigma\mid R\ra$ be a tree rewriting system and let $\rho$ in $\Sigma$. If the diagram group $DG(\mathcal P,\rho)$ is finitely generated then it has property B.
\end{cy}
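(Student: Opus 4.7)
The plan is to combine Lemma~\ref{N(delta)} with an elementary upper bound on cell count to obtain the bi-Lipschitz equivalence in both directions. First I would observe that since $\mathcal P$ is a tree rewriting system, by Lemma~\ref{natural} the group $H := DG(\mathcal P,\rho)$ naturally embeds into $F$ as a closed subgroup in the sense of Definition~\ref{closed}. As $H$ is assumed finitely generated, the corollary preceding Lemma~\ref{N(delta)} yields that the Stallings core $\mathcal C(H)$ is finite. Let $\mathcal P_c$ be the core rewriting system with distinguished edge $\rho_c$, so that $H \cong DG(\mathcal P_c,\rho_c)$ via the natural embeddings into $F$. Because each such embedding merely relabels edges and, by the argument of Lemma~\ref{natural}, sends reduced diagrams to reduced diagrams, the number of cells $\mathcal N(h)$ in the reduced diagram representing an element $h\in H$ does not depend on whether we view $h$ as a diagram over $\mathcal P$, over $\mathcal P_c$, or as an element of $F$.

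For the direction ``word length bounded linearly by $\mathcal N$'', I would apply Lemma~\ref{N(delta)} to $\mathcal P_c$ to obtain a finite generating set $Y$ of $H\cong DG(\mathcal P_c,\rho_c)$ such that $|h|_Y \leq 3\mathcal N(h)$ for every $h\in H$. For the reverse inequality, suppose $h = y_{i_1}^{\epsilon_1}\cdots y_{i_k}^{\epsilon_k}$ is a geodesic expression with $k=|h|_Y$, each $y_{i_j}\in Y$ and $\epsilon_j\in\{\pm 1\}$. Writing $\Theta_j$ for the reduced diagram of $y_{i_j}^{\epsilon_j}$, the reduced diagram of $h$ is obtained from the concatenation $\Theta_1 \circ \cdots \circ \Theta_k$ by elimination of dipoles, an operation which never increases the number of cells. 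Hence
\[
\mathcal N(h) \;\leq\; \sum_{j=1}^k \mathcal N(\Theta_j) \;\leq\; M\cdot k \;=\; M\cdot |h|_Y,
\]
where $M = \max_{y\in Y}\mathcal N(y)$ is a finite constant depending only on the chosen generating set.

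Combining the two inequalities shows that $\mathcal N$ and the word length with respect to $Y$ are bi-Lipschitz equivalent functions on $H$. Since word lengths with respect to any two finite generating sets of a finitely generated group are bi-Lipschitz equivalent, this establishes property B for $H$. I anticipate no substantial obstacle: the hard direction is already contained in Lemma~\ref{N(delta)}, and the only mild subtlety is verifying that $\mathcal N$ is well-defined on $H$ independently of which tree-rewriting presentation is used to realize it, which follows immediately from the label-preserving nature of the natural embedding into $F$.
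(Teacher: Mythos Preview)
Your argument is correct and follows essentially the same route as the paper: reduce to the finite core rewriting system $\mathcal P_c$ via the natural embedding into $F$ (which preserves cell count), then invoke Lemma~\ref{N(delta)} for the nontrivial direction. The only difference is that you spell out the easy inequality $\mathcal N(h)\le M\cdot|h|_Y$, whereas the paper absorbs this into the definition of Property~B (note the parenthetical ``equivalently, if the word length \dots\ is at most linear in the number of cells'').
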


\begin{proof}
	If $\mathcal P$ is a finite core rewriting system and $\rho$ is the distinguished edge of the core, the lemma follows immediately from Lemma \ref{N(delta)}.
	
	In the general case,
	consider the subgroup $H=DG(\mathcal P,\rho)$ of $F$. Since $H$ is closed and finitely generated it has a finite core rewriting system $\mathcal P_{\mathcal C}$ with a distinguished edge $\rho_{\mathcal C}$. Then the subgroup  $H$ coincides with the diagram group $DG(\mathcal P_{\mathcal C},\rho_{\mathcal C})$ (when they are viewed as subgroups of $F$). Since $DG(\mathcal P,\rho)$ and $DG(\mathcal P_{\mathcal C},\rho_{\mathcal C})$ coincide as subgroups of $F$, 	
	 there is an isomorphism from $DG(\mathcal P,\rho)$ to $DG(\mathcal P_{\mathcal C},\rho_{\mathcal C})$ which preserves the number of cells in a diagram. 
	  Hence, we are done by the previous case.
	
%
%
%
%
%
\end{proof}

Recall that if $G$ is a group generated by a finite set $S$ and $H$ is a subgroup of $G$ generated by a finite set $T$, then the distortion function $\delta_{S,T}$ is the smallest function $\N\to \N$ such that if an element $h\in H$ is a product of $n$ elements of $S$, then it is a product of at most $\delta_{S,T}(n)$ elements of $T$. For fixed $G, H$ but different (finite) $S, T$, the functions $\delta_{S,T}$ are equivalent\footnote{Two functions $f,g\colon \N\to \N$ are called \emph{equivalent} if for some $c>1$, $\frac 1c f(\frac nc)-c \le g(n)\le cf(cn)+c$ for every $n\in \N$.}.  The subgroup $H$ is called \emph{undistorted} in $G$ if the distortion function is linear. 
Although many subgroups of  Thompson's group $F$ are known to be undistorted (see, for example, \cite{GuSa,Burillo, GubSa1, WC,GS2}), $F$ has distorted subgroups \cite{GubSa1, DO}. Corollary \ref{Property B} and the fact that $F$ has property $B$ as a diagram group over $\la x\mid x^2\to x\ra$  imply the following.

\begin{theorem}\label{undis}
	Let $H$ be a finitely generated closed subgroup of $F$.
	Then $H$ has linear distortion in Thompson's group $F$. \qed
\end{theorem}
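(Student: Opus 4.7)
The plan is to combine Corollary \ref{Property B} with the well-known property B of $F$ itself, using the fact that the natural embedding of a closed subgroup $H$ into $F$ preserves the number of cells of a diagram.

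First, I would fix the setup. Since $H$ is a finitely generated closed subgroup of $F$, its core $\mathcal C(H)$ is finite, and $H$ coincides (as a subgroup of $F$) with the diagram group $DG(\mathcal P_{\mathcal C},\rho_{\mathcal C})$ over the finite core rewriting system $\mathcal P_{\mathcal C}$, where $\rho_{\mathcal C}$ is the distinguished edge. Pick a finite generating set $T$ of $H$ and the standard generating set $S=\{x_0,x_1\}$ of $F$. Every element $h\in H$ can then be represented by a reduced $(\rho_{\mathcal C},\rho_{\mathcal C})$-diagram $\Delta_H$ over $\mathcal P_{\mathcal C}$, which under the natural embedding of Lemma \ref{natural} becomes a reduced diagram $\Delta_F$ over $\langle x\mid x^2\to x\rangle$ having exactly the same number of cells as $\Delta_H$ (the embedding just relabels every edge by $x$, so cells are in bijective correspondence).

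Next, I would invoke property B on both sides. By a result of Burillo cited in the paper, Thompson's group $F$, viewed as a diagram group over $\langle x\mid x^2\to x\rangle$, has property B: there is a constant $C_F>0$ such that for every diagram $\Delta_F$ representing $h\in F$, the number of cells $\mathcal N(\Delta_F)$ satisfies $\mathcal N(\Delta_F)\le C_F\cdot|h|_S$, where $|h|_S$ is the word length of $h$ with respect to $S$. On the other hand, by Corollary \ref{Property B}, the finitely generated diagram group $H=DG(\mathcal P_{\mathcal C},\rho_{\mathcal C})$ also has property B, so there is a constant $C_H>0$ such that $|h|_T\le C_H\cdot \mathcal N(\Delta_H)$ for any reduced diagram $\Delta_H\in H$ representing $h$.

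Finally, I would chain the two inequalities. Given any $h\in H$, pick the reduced diagram $\Delta_H\in DG(\mathcal P_{\mathcal C},\rho_{\mathcal C})$ representing $h$ and let $\Delta_F$ be its image in $F$ under the natural embedding. Since $\mathcal N(\Delta_H)=\mathcal N(\Delta_F)$, we obtain
\[
|h|_T\ \le\ C_H\cdot \mathcal N(\Delta_H)\ =\ C_H\cdot \mathcal N(\Delta_F)\ \le\ C_HC_F\cdot |h|_S,
\]
which is exactly the statement that the distortion function $\delta_{S,T}$ is linear, i.e.\ $H$ is undistorted in $F$.

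There is essentially no further obstacle: all the hard work has been done in Lemma \ref{N(delta)} and Corollary \ref{Property B}, and the only subtlety is the identification of the cell counts under the natural embedding, which is immediate from the construction in Lemma \ref{natural}. Note also that by a standard argument the distortion function does not depend (up to equivalence) on the chosen finite generating sets $S$ and $T$, so the linear bound above is genuinely a statement about the intrinsic distortion of $H$ in $F$.
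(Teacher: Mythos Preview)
Your proof is correct and follows exactly the approach the paper intends: the theorem is stated with a \qed\ because it is an immediate consequence of Corollary \ref{Property B} (property B for $H$) combined with property B for $F$ itself, and you have simply spelled out that chain of inequalities explicitly. The only minor remark is that the inequality $\mathcal N(\Delta_F)\le C_F\,|h|_S$ you use for $F$ is in fact the trivial direction of property B (any product of $n$ generators yields a diagram with at most $Mn$ cells), so Burillo is not strictly needed there; the nontrivial input is Corollary \ref{Property B} for $H$, exactly as in the paper.
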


\section{Examples}\label{sec:exa}

Theorem \ref{undis} implies that many subgroups of $F$ are undistorted. We give here a few examples. In most of the examples, it is convenient to view $F$ as a group of homeomorphisms of $[0,1]$. Hence, we begin by recalling the definition of $F$ as a group of homeomorphisms, the relation to $F$ viewed as a diagram group and some related notions.

\subsection{$F$ as a group of homeomorphisms}

Recall that Thompson's group $F$ is the group of all piecewise linear homeomorphisms of the interval $[0,1]$ where all breakpoints are finite dyadic and all slopes are integer powers of $2$.
The group $F$ is generated by two functions, usually denoted $x_0$ and $x_1$ (see \cite{CFP}).
The composition in $F$ is from left to right.

Every element of $F$ is completely determined by how it acts on the set $\zz$. Every number in $(0,1)$ can be described as $.s$ where $s$ is an infinite word in $\{0,1\}$. For each element $g\in F$ there exists a finite collection of pairs of finite binary words $(u_i,v_i)$ such that every infinite binary word  starts with exactly one of the $u_i$'s. The action of $F$ on a number $.s$ is the following: if $s$ starts with $u_i$, we replace $u_i$ by $v_i$. For example, the generators $x_0$ and $x_1$ of $F$  are the following functions:
\[
x_0(t) =
\begin{cases}
.0\alpha &  \hbox{ if }  t=.00\alpha \\
.10\alpha       & \hbox{ if } t=.01\alpha\\
.11\alpha       & \hbox{ if } t=.1\alpha\
\end{cases} 	\qquad	
x_1(t) =
\begin{cases}
.0\alpha &  \hbox{ if } t=.0\alpha\\
.10\alpha  &   \hbox{ if } t=.100\alpha\\
.110\alpha            &  \hbox{ if } t=.101\alpha\\
.111\alpha                      & \hbox{ if } t=.11\alpha\
\end{cases}
\]
where $\alpha$ is any infinite binary word.


\subsubsection{The relation between $F$ as a diagram group and $F$ as a group of homeomorphisms} \label{sec:red}

Instead of describing elements of $F$ as diagrams, one can describe them as pairs of full finite binary trees. Let $\Delta$ be a reduced diagram in $F$ and consider the subdiagrams $\Delta^+$ and $\Delta^-$. If one puts a vertex at the middle of every edge of $\Delta^+$ and for each cell $\pi$ in $\Delta^+$, draws an edge from the vertex on $\topp(\pi)$ to each of the vertices on $\bott(\pi)$ one gets a full finite binary tree $T_+$. If one applies the same construction to the diagram $(\Delta^-)\iv$, one gets a full finite binary tree $T_-$, such that $T_+$ and $T_-$ have the same number of leaves. The element $\Delta$ is represented by the \textit{tree-diagram} $(T_+,T_-)$.


If $T$ is a finite binary tree, a \emph{branch} of $T$ is a maximal simple path starting from the root. 
If every left edge of $T$ is labeled  $``0"$ and every right edge is labeled  $``1"$, then every branch of $T$ is labeled by a finite binary word $u$. Now, let $\Delta$ be a reduced diagram and let $(T_+,T_-)$ be the corresponding tree-diagram where $T_+$ and $T_-$ have $n$ leaves. Let $u_1,\dots,u_n$ (resp. $v_1,\dots,v_n$) be the (labels of) branches of $T_+$ (resp. $T_-$), ordered from left to right.
For each $i=1,\dots,n$ we say that the tree-diagram $(T_+,T_-)$ has the \emph{pair of branches} $u_i\rightarrow v_i$. The function $g$ from $F$ corresponding to this tree-diagram takes binary fraction $.u_i\alpha$ to $.v_i\alpha$ for every $i$ and every infinite binary word $\alpha$.

\subsubsection{Closed subgroups of $F$ in terms of homeomorphisms}

Recall that by Lemma \ref{dyadic} a subgroup $H$ of $F$ is closed if and only if every function $f\in H$ that is a piecewise-$H$ function belongs to $H$. This condition can also be described as follows.

\begin{df}
	Let $g\in F$ be a function which fixes a dyadic fraction $\alpha\in (0,1)$. The \emph{components} of $g$ at $\alpha$ are the functions.
	\[
	g_1(t) =
	\begin{cases}
	g(t) &  \hbox{ if }  t\in[0,\alpha] \\
	t       & \hbox{ if } t\in[\alpha,1]\
	\end{cases}  	\qquad	
	g_2(t) =
	\begin{cases}
	t &  \hbox{ if }  t\in[0,\alpha] \\
	g(t)       & \hbox{ if } t\in[\alpha,1]\
	\end{cases},
	\]
\end{df}

\begin{lemma}[{\cite[Corollary 5.7]{G}}]\label{components}
	Let $H$ be a subgroup of $F$. Then $H$ is a closed subgroup of $F$ if and only if for every function $h\in H$ and every dyadic fraction $\alpha$ such that $h$ fixes $\alpha$, the components of $h$ at $\alpha$ belong to $H$.
\end{lemma}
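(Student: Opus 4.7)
The plan is to prove the two implications separately; the forward direction is essentially immediate, and almost all of the work goes into the converse.

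For the forward direction, I would assume $H = Cl(H)$. Given $h \in H$ and a dyadic fixed point $\alpha$ of $h$, each of the two components $g_1, g_2$ of $h$ at $\alpha$ clearly belongs to $F$ (the breakpoints remain dyadic and the slopes remain integer powers of $2$). On one of the intervals $[0,\alpha]$ or $[\alpha,1]$ the component agrees with $h \in H$, and on the other it agrees with the identity, which also lies in $H$. Hence each $g_i$ is a piecewise-$H$ function in $F$, so by closedness $g_i \in H$.

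For the converse, I will assume the components hypothesis and show by induction on the number of dyadic pieces $n$ that every piecewise-$H$ function $f \in F$ belongs to $H$. The base case $n = 1$ is trivial: then $f = h_1 \in H$. For the inductive step, write $f = h_i$ on $[\alpha_{i-1},\alpha_i]$ with $h_i \in H$, and my aim is to construct, from the first two pieces, a single element of $H$ that coincides with $f$ throughout the initial segment $[0,\alpha_2]$. Continuity of $f$ at $\alpha_1$ gives $h_1(\alpha_1) = h_2(\alpha_1)$, so the element $\tilde h := h_2 h_1^{-1} \in H$ fixes $\alpha_1$. The components hypothesis then places the component $\tilde h_2$ of $\tilde h$ at $\alpha_1$ that is identity on $[0,\alpha_1]$ into $H$, and therefore $f^* := \tilde h_2 h_1 \in H$. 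A direct verification, using the left-to-right composition convention of the paper, shows that $f^*$ equals $h_1$ on $[0,\alpha_1]$ and $h_2$ on $[\alpha_1,1]$.

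In particular $f$ and $f^*$ agree on $[0,\alpha_2]$, so $g := (f^*)^{-1} f \in F$ is the identity on the initial dyadic interval $[0, h_2(\alpha_2)]$, and on each successive dyadic interval $[h_2(\alpha_i), h_2(\alpha_{i+1})]$ with $i \geq 2$ it coincides with the element $h_2^{-1} h_{i+1} \in H$. Thus $g$ is piecewise-$H$ with at most $n-1$ pieces, and the induction hypothesis gives $g \in H$, whence $f = f^* g \in H$. The delicate point in the plan is the design of $f^*$: it must be an actual element of $H$, not merely a piecewise-$H$ object, and must reproduce $f$ exactly on a genuine initial segment. The choice $f^* = \tilde h_2 h_1$ is tailored to achieve this --- the identity half of $\tilde h_2$ preserves $h_1$ on $[0,\alpha_1]$, while its nontrivial half cancels the factor $h_1^{-1}$ inside $\tilde h = h_2 h_1^{-1}$ on $[\alpha_1,1]$ and recovers $h_2$ --- and once this construction is in place, the rest of the induction runs smoothly.
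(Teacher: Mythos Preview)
The paper does not actually prove this lemma; it is quoted from \cite[Corollary 5.7]{G} and stated without proof here. So there is no in-paper argument to compare against.

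That said, your proof is correct and self-contained. The forward direction is exactly right: each component of $h$ at a dyadic fixed point is visibly a two-piece piecewise-$H$ function with a dyadic break, hence lies in $Cl(H)=H$. For the converse, your induction on the number of pieces is the natural approach, and the key construction $f^{*}=\tilde h_{2}\,h_{1}$ does precisely what you claim under the paper's left-to-right composition convention: on $[0,\alpha_{1}]$ the identity half of $\tilde h_{2}$ leaves $h_{1}$ intact, and on $[\alpha_{1},1]$ the equality $\tilde h_{2}=h_{2}h_{1}^{-1}$ gives $f^{*}=h_{2}$. The passage to $g=(f^{*})^{-1}f$ correctly produces a piecewise-$H$ function with one fewer piece (the new breakpoints $h_{2}(\alpha_{i})$ are dyadic since $h_{2}\in F$ and the $\alpha_{i}$ are dyadic), so the induction closes. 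Your argument is essentially the standard one and would serve perfectly well as a proof in the paper.
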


Note that Lemma \ref{components} implies that if $S$ is a subset of $[0,1]$, then the stabilizer of $S$ in $F$ is a closed subgroup of $F$. Similarly, it implies that the intersection of closed subgroups of $F$ is a closed subgroup of $F$.

\subsection{Examples of undistorted subgroups of $F$}

\paragraph{Cyclic subgroups.} Theorem \ref{undis} can be used to recover the fact that every cyclic subgroup of $F$ is undistorted in $F$ (see \cite{Burillo}). Indeed, it follows from Lemma \ref{components} that the closure of any cyclic subgroup $H$ of $F$ is abelian  and finitely generated (it also follows from \cite[Theorem 11.1]{G}). Hence, if $H$ is a cyclic subgroup of $F$ then $H$ is undistorted in $Cl(H)$ and the closure $Cl(H)$ is undistorted in $F$. Therefore, $H$ is undistorted in $F$.
Note that by Lemma \ref{components}, a cyclic subgroup $H=\la f\ra$ of $F$ is closed if and only if $f$ does not fix any dyadic fraction in $(0,1)$.



\paragraph{Solvable subgroups.} Recall that in \cite{Cleary}, it is proved that Thompson's group $F$ has an undistorted subgroup isomorphic to $\mathbb{Z}\wr\mathbb{Z}$. Let $\mathcal S$ be the  smallest class of groups that includes the group $\mathbb{Z}$ and is closed under finite direct sums and under wreath products with $\mathbb{Z}$.
Here, we demonstrate that any group in the class $\mathcal S$ can be embedded in $F$ without distortion.

Recall that an \emph{orbital} of an element $f\in F$ is an interval $(a,b)\subseteq (0,1)$ such that $f$ fixes the endpoints $a$ and $b$ and does not fix any point in $(a,b)$.
The orbital $(a,b)$ is a \emph{push-up} (resp. push-down) orbital if for every $x\in (a,b)$, we have $f(x)>x$ (resp. $f(x)<x$).
We say that a subset $A$ of the interval $[0,1]$ is a \emph{fundamental domain} of a function $f\in F$ if it contains exactly one point from each non-trivial orbit of the action of $f$ on $[0,1]$ (Note that if $f$ has a unique orbital $(a,b)$, then an interval $[c,d)\subseteq (a,b)$ is a fundamental domain of the element $f$ if $f(c)=d$). 
The \emph{support} of the function $f$ (resp. a subgroup $H\leq F$) is the set of all points $x\in (0,1)$ which are not fixed by $f$ (resp. by $H$). We say that $f$ (resp. $H$) is supported in a subset $I\subseteq (0,1)$ if the support of $f$ (resp. $H$) is contained in $I$. 



Thompson's group $F$ contains many copies of itself (see \cite{BrinU}). We will be interested in copies of the following simple form. Let $u$ be a finite binary word. We denote by $[u]$ the dyadic interval $[.u,.u1^{\mathbb{N}}]$.  We denote by $F_{[u]}$ the subgroup of $F$ of all functions supported in the interval $[u]$ (i.e., the subgroup of all functions which fix $[0,1]\setminus[u]$ pointwise).  It is easy to see that $F$ is isomorphic to $F_{[u]}$.
Indeed, Thompson's group $F$ can be viewed as a subgroup of the group $\pl_2(\mathbb{R})$ of all piecewise linear homeomorphisms of  $\mathbb R$ with a finite number of  dyadic break points and absolute values of all slopes powers of 2.  Let $\phi_u\in \pl_2(\mathbb{R})$ be an increasing function which maps the interval $[0,1]$ linearly onto $[u]$, (such a function clearly exists). Then $\phi_u$ induces an isomorphism from $F$ to $F_{[u]}$ by conjugation. Indeed, $F^{\phi_u}$ is the subgroup of $\pl_2(\mathbb{R})$  of all orientation preserving homeomorphisms with support in $[u]$, that is, $F^{\phi_u}=F_{[u]}$.
 If $g\in F$, we denote by $g_{[u]}$ its image $g^{\phi_u}$ under the above isomorphism (and note that the image does not depend on the choice of the function $\phi_u$). We refer to $g_{[u]}$ as the \emph{$[u]$-copy} of $g$.

Note that by Lemma \ref{dyadic}, for every finite binary word $u$, the subgroup $F_{[u]}$ is a closed subgroup of $F$ (and hence undistorted in $F$). Let $H$ be a subgroup of $F$. We denote by $H_{[u]}$ the image of $F$ in $F_{[u]}$ under the above isomorphism. It follows from Lemma \ref{components}, that if $H$ is a closed subgroup of $F$ then $H_{[u]}$ is also a closed subgroup of $F$.

The following observation is well known (see, for example \cite{Brin}).

\begin{lemma}\label{wreath}
	Let $H_1,H_2$ be subgroups of $F$ and let $f\in F$. Then the following assertions hold.
	\begin{enumerate}
		\item[(1)] If $H_1$ and $H_2$ have disjoint supports then $H_1$ and $H_2$ commute and the subgroup of $F$ generated by them is isomorphic to $H_1\times H_2$.
		\item[(2)] 	 Assume that the support of $H$ is contained in a fundamental domain of $f$. Then the subgroup of $F$ generated by $H$ and $f$ is isomorphic to the wreath product $H\wr \mathbb{Z}$.
	\end{enumerate}
\end{lemma}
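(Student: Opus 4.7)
The plan for Part (1) is straightforward. For $h_1\in H_1$ and $h_2\in H_2$ I will check $h_1h_2=h_2h_1$ pointwise: at any $x\in[0,1]$ the disjointness of supports forces at least one of the two elements to fix $x$, and since the support of any $g\in F$ is $g$-invariant (if $g(x)\neq x$ then $g(g(x))\neq g(x)$), both orderings of composition agree on $x$. To upgrade $\langle H_1,H_2\rangle$ from a quotient of $H_1\times H_2$ to an isomorphic copy, I will note that any element in $H_1\cap H_2$ has support in $\mathrm{supp}(H_1)\cap\mathrm{supp}(H_2)=\emptyset$ and hence equals the identity, so the universal property of the direct product yields the isomorphism.

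For Part (2), let $D$ be the fundamental domain of $f$ containing $\mathrm{supp}(H)$ and set $H_n:=f^nHf^{-n}$ for each $n\in\mathbb Z$, so that $\mathrm{supp}(H_n)=f^n(\mathrm{supp}(H))\subseteq f^n(D)$. The defining property of a fundamental domain says that the translates $f^n(D)$ meet each non-trivial orbit of $f$ in at most one point; consequently, once I check that $\mathrm{supp}(H)$ avoids the fixed-point set of $f$ (any $x\in\mathrm{supp}(H)\cap\mathrm{Fix}(f)$ would lie on the trivial orbit $\{x\}$, which is disjoint from any fundamental domain by definition), the supports $\mathrm{supp}(H_n)$ are pairwise disjoint. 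Repeated application of Part (1) then identifies the subgroup $N:=\langle H_n:n\in\mathbb Z\rangle$ with the restricted direct sum $\bigoplus_{n\in\mathbb Z}H$, and conjugation by $f$ sends $H_n$ to $H_{n+1}$, so $fNf^{-1}=N$ and the index shift realises the natural $\mathbb Z$-action on the base group of the wreath product.

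The one step that needs a little care is verifying that $\langle H,f\rangle=N\rtimes\langle f\rangle$ is actually a semidirect product, i.e.\ that $N\cap\langle f\rangle=\{1\}$. Assuming $f\neq 1$ (the degenerate case is immediate), $f$ has infinite order since $F$ is torsion-free, so $\langle f\rangle\cong\mathbb Z$. Every element of $N$ preserves each set $f^n(D)$ setwise (its restriction to $f^n(D)$ is the $H_n$-component, and it fixes the rest pointwise), while for $k\neq 0$ the homeomorphism $f^k$ maps $f^n(D)$ onto $f^{n+k}(D)$, a disjoint set. Hence no non-trivial power of $f$ lies in $N$, giving the desired semidirect decomposition and the isomorphism $\langle H,f\rangle\cong H\wr\mathbb Z$.

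The main obstacle, such as it is, lies in making the fundamental-domain bookkeeping precise—specifically, pinning down that $\mathrm{supp}(H)$ is automatically disjoint from $\mathrm{Fix}(f)$ and that the $f^n(D)$ form the partition that drives the whole argument. Once that geometric observation is in place, both parts reduce to formal verifications, and I would expect to present Part (1) in a few lines and Part (2) as the three-step outline above (disjoint supports $\Rightarrow$ direct sum; $f$-conjugation $\Rightarrow$ normality; orbit-permutation argument $\Rightarrow$ trivial intersection).
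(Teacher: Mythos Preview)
Your argument is correct and is the standard one; the paper does not actually prove this lemma but simply records it as well known with a reference to Brin. Two small points worth tightening: first, the paper's definition of a fundamental domain only requires that it meet each \emph{non-trivial} orbit in exactly one point, and says nothing about fixed points of $f$, so your claim that ``the trivial orbit $\{x\}$ is disjoint from any fundamental domain by definition'' is not literally justified by the wording given. In practice the intended (and standard) convention is $D\subseteq\mathrm{supp}(f)$, as the paper's parenthetical example makes clear, and everything you wrote goes through once that is assumed; you should just say so explicitly rather than appeal to the definition. Second, the ``degenerate case'' $f=1$ is not quite immediate in the direction you suggest: with $D\subseteq\mathrm{supp}(f)=\emptyset$ one is forced to $H=1$, and then $\langle H,f\rangle=1$ while $H\wr\mathbb Z\cong\mathbb Z$, so the statement is vacuous or false there depending on conventions; it is cleanest simply to note that the hypothesis implicitly requires $f\neq 1$ (or $H=1$). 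Neither of these affects the substance of your proof.
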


We will need the following lemma.

\begin{lemma}\label{wreath closed}
	Let $H_1$, $H_2$ be closed subgroups of $F$. Then the following assertions hold.
	\begin{enumerate}
		\item[(1)] Thompson's group $F$ has a closed subgroup isomorphic to $H_1\times H_2$.
		\item[(2)] Thompson's group $F$ has a closed subgroup isomorphic to $H_1\wr\mathbb{Z}$.
	\end{enumerate}
\end{lemma}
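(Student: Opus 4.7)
The plan is to construct the required subgroups explicitly using the $[u]$-copy construction and the fact, noted just before Lemma~\ref{wreath}, that $[u]$-copies of closed subgroups of $F$ are closed. For~(1), I would set $K_1 := H_{1,[0]}$ and $K_2 := H_{2,[1]}$, which are closed subgroups of $F$ with supports in $[0,1/2]$ and $[1/2,1]$ respectively. Since the supports are disjoint, Lemma~\ref{wreath}(1) gives $K := \la K_1, K_2 \ra \cong H_1 \times H_2$. To show $K$ is closed, I apply Lemma~\ref{components} to a generic $g = k_1 k_2 \in K$ with dyadic fixed point $\alpha$: if $\alpha = 1/2$ the components of $g$ are exactly $k_1$ and $k_2$; if $\alpha$ lies strictly on one side of $1/2$, only the corresponding $k_i$ is nontrivial near $\alpha$, its components lie in $K_i \subseteq K$ by closedness of $K_i$, and the other factor is absorbed wholesale into the appropriate half.

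For~(2), I would take $\phi := x_0$, which has a single push-up orbital $(0,1)$ with fundamental domain $[1/2, 3/4)$, together with $K_1 := H_{1,[10]}$. Every element of $K_1$ has support contained in $(1/2, 3/4) \subset [1/2, 3/4)$, so Lemma~\ref{wreath}(2) yields $K := \la K_1, \phi \ra \cong H_1 \wr \mathbb{Z}$. A typical element of $K$ has the form $g = h \cdot \phi^k$, where $h$ is a finite product of $\phi$-conjugates of elements of $K_1$; each such conjugate is supported in a single ``zone'' $\phi^n([10])$, and these zones tile $(0,1)$ with shared boundaries forming the ``dyadic fence'' $D := \{\phi^n(1/2) : n \in \mathbb{Z}\}$. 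Consequently $h$ preserves every zone setwise and fixes $D$ pointwise.

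Closedness of $K$ then follows from Lemma~\ref{components} via a case analysis on $k$. When $k = 0$, so $g = h$ fixes a dyadic $\alpha$, either $\alpha \in D$ and the components split $g$ into two subproducts of zone-pieces lying in $K$, or $\alpha$ lies in the interior of a zone $\phi^m([10])$, in which case only the $m$-th zone-piece of $h$ is nontrivial at $\alpha$, and its components lie in the $\phi^m$-conjugate of $K_1$ --- which is closed in $F$ since $\phi \in F$ preserves dyadic fractions. The only delicate step, and the main obstacle, is the case $k \neq 0$: here I would argue that $g$ has \emph{no} dyadic fixed point in $(0,1)$. Given $\alpha \in (0,1)$ dyadic, the image $\phi^k(\alpha)$ lies in a zone distinct from any one containing $\alpha$ (this is clearest when $\alpha$ is in a zone's interior, but also holds on the fence $D$, where $h$ acts trivially and the question reduces to whether $\phi^k$ fixes $\alpha$, which it does not); since $h$ preserves each zone setwise, $g(\alpha)$ lands in the wrong zone and so $g(\alpha) \neq \alpha$. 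Hence for $k \neq 0$ the condition of Lemma~\ref{components} is vacuous, and $K$ is closed.
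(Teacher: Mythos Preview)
Your proposal is correct and follows essentially the same strategy as the paper. The paper only writes out part~(2), choosing a generic $f$ with a single push-up orbital $(a,b)$ and a dyadic interval $[u]$ inside a fundamental domain, whereas you make the concrete choice $\phi=x_0$ and $[u]=[10]$; the case analysis ($m=0$ versus $m\neq 0$) and the use of Lemma~\ref{components} are the same. Your ``zone'' bookkeeping is just a repackaging of the paper's argument that the conjugates $h_j^{f^{k_j}}$ have disjoint ordered supports, and your observation that $(K_1)^{\phi^m}$ is closed (because $\phi\in F$ preserves dyadic rationals) is exactly what the paper does when it pulls $\alpha$ back to $\alpha'=f^{-k_i}(\alpha)$ and invokes closedness of $H$. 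One small difference: in the $k\neq 0$ case the paper shows the stronger fact that $g$ has \emph{no} fixed point in the orbital (so every component of $g$ is either trivial or equals $g$), while you only claim no \emph{dyadic} fixed point --- but your argument, as written, never uses dyadicness of $\alpha$ and in fact proves the same stronger statement. Your treatment of part~(1), which the paper omits, is also fine.
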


\begin{proof}
	We only prove part (2).  Let $f$ be an element of $F$ such that $f$ has a unique (push-up) orbital $(a,b)$ and let $u$ be a finite binary word such that the dyadic interval $[u]$ is contained in a fundamental domain of $f$. Let $H=(H_1)_{[u]}$ be the image of $H$ in $F_{[u]}$ under the natural isomorphism defined above. Let $G$ be the subgroup of $F$ generated by $f$ and $H$.
	 By Lemma \ref{wreath}, $G\cong H_1\wr\mathbb{Z}$. It suffices to prove that $G$ is closed. The proof that $G$ is closed is similar to the proof of \cite[Corollary 4.6]{BBH}.
	
%
	 Let $g\in G$ and assume that $g$ fixes a dyadic fraction $\alpha\in (0,1)$. By Lemma \ref{components} it suffices to prove that the components of $g$ at $\alpha$ belong to $G$.
	  From the structure of $G$ as a wreath product, there exist integers $r\geq 0$ and $m\in\mathbb{Z}$, elements $h_1, \dots,h_r\in H$ and integers $k_1<\cdots<k_r$ such that
	 $$g=h_1^{f^{k_1}}h_2^{f^{k_2}}\cdots h_r^{f^{k_r}}f^m$$
	
	  Note that for each $i$, the element $h_i$ is supported in the interval $[u]$. Hence, the element $h_i^{f^{k_i}}$ is supported in $f^{k_i}([u])$.  Since $[u]$ is contained in a fundamental domain of $f$, the supports of the elements $h_1^{f^{k_1}},h_2^{f^{k_2}},\cdots, h_r^{f^{k_r}}$ are disjoint. 
	 Then, since $(a,b)$ is a push-up orbital, for each $i<j$ in $\{1,\dots,r\}$, the support of  $h_i^{f^{k_i}}$ is entirely to the left of the support of $h_j^{f^{k_j}}$.
	
	 Now, assume first that $m=0$. In that case,
	 $$g=h_1^{f^{k_1}}h_2^{f^{k_2}}\cdots h_r^{f^{k_r}}$$
	  Let $i\in\{1,\dots,r\}$ be the minimal index such that the support of $h_i^{f^{k_i}}$ is not contained in $(0,\alpha)$. 
	   (If no such index exists, then the components of $g$ at $\alpha$ are $g$ and the identity and we are done.) If the support of $h_i^{f^{k_i}}$ is contained in $[\alpha,1]$, then the components of $g$ at $\alpha$ are $$g_1=h_1^{f^{k_1}}\cdots h_{i-1}^{f^{k_{i-1}}}\ \mbox{and }\ g_2=h_i^{f^{k_i}}\cdots h_r^{f^{k_r}}.$$
	 In particular, they both belong to $G$, as required.
	 If the support of $h_i^{f^{k_i}}$ is not contained in $[\alpha,1]$, we consider the binary fraction $\alpha'=f^{-k_i}(\alpha)$. Note that $h_i^{f^{k_i}}$ fixes $\alpha$ (since $\alpha$ is a fixed point of $g$). Hence,  $\alpha'$ is a fixed point of $h_i$. Since $h_i\in H$ and $H$ is a closed subgroup of $F$, the components of $h_i$ at $\alpha'$ belong to $H$. As the components $\xi_1$ and $\xi_2$ of $h_i^{f^{k_i}}$ at $\alpha$ are $f^{k_i}$-conjugates of the components of $h_i$ at $\alpha'$, they belong to $G$. Then, the components of $g$ at $\alpha$ are
	 $$g_1=h_1^{f^{k_1}}\cdots h_{i-1}^{f^{k_{i-1}}}\xi_1\ \mbox{and }\ g_2=\xi_2h_i^{f^{k_{i+1}}}\cdots h_r^{f^{k_r}},$$
	 and in particular, they belong to $G$, as required.
	
	 Hence, it suffices to consider the case where $m\neq 0$. We claim that in that case, $(a,b)$ is the unique orbital of $g$. Since the support of $f$ is $(a,b)$ and the support of each $h_i^{f^{k_i}}$ is contained in $(a,b)$, the function $g$ fixes pointwise $[0,1]\setminus (a,b)$. Let $x\in (a,b)$. It suffices to show that $f(x)\neq x$. If $x$ is not in the support of $h_i^{f^{k_i}}$ for any $i\in\{1,\dots,r\}$, then $g(x)=f^m(x)\neq x$ since $x\in (a,b)$ and $(a,b)$ is an orbital of $f$. Otherwise, there is a unique $i\in\{1,\dots,r\}$ such that $x$ is in the support of $h_i^{f^{k_i}}$. Note that, $h_i^{f^{k_i}}(x)$ is also in the support of $h_i^{f^{k_i}}$ and as such, it is fixed by $h_j^{f^{k_j}}$ for all $j\neq i$.
	 In particular, $g(x)=f^m(h_i^{f^{k_i}}(x))$. Note that, $x,h_i^{f^{k_i}}(x)\in f^{k_i}([u])$ since $h_i^{f^{k_i}}$ is supported in $f^{k_i}([u])$. Since $[u]$, and as such $f^{k_i}([u])$, is contained in a fundamental domain of $f$, $f^m(h_i^{f^{k_i}}(x))\not\in f^{k_i}([u])$. Hence, $g(x)=f^m(h_i^{f^{k_i}}(x))\neq x$ as necessary.
	
	 Now, since $(a,b)$ is the unique orbital of $g$, every component of $g$ is either trivial or coincides with $g$. In particular, it belongs to $G$.
	 Hence, $G$ is a closed subgroup of $F$.
\end{proof}


By Lemma \ref{wreath closed} and the fact that $F$ has a  closed cyclic subgroup we have the following.

\begin{cy}\label{solv}
	Any group in the class $\mathcal S$ 
	 can be embedded in $F$ as a closed subgroup, and hence, without distortion.
\end{cy}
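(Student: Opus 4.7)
The plan is a straightforward induction on the construction of a group $G\in\mathcal S$, with the two inductive tools being the two parts of Lemma \ref{wreath closed} and with the undistortion conclusion coming from Theorem \ref{undis} applied at the end.

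For the base case, I would exhibit a closed subgroup of $F$ isomorphic to $\mathbb Z$. By Lemma \ref{components} and the discussion in the ``Cyclic subgroups'' paragraph, a cyclic subgroup $\langle f\rangle\le F$ is closed if and only if $f$ has no fixed dyadic fraction in $(0,1)$; so for instance $\langle x_0\rangle$ (whose only fixed points in $[0,1]$ are the endpoints) is a closed copy of $\mathbb Z$ in $F$.

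For the inductive step, suppose $H_1,H_2\in\mathcal S$ have already been realized as closed subgroups of $F$. Then Lemma \ref{wreath closed}(1) produces a closed subgroup of $F$ isomorphic to $H_1\times H_2$, which takes care of finite direct sums by iteration; and Lemma \ref{wreath closed}(2) produces a closed subgroup of $F$ isomorphic to $H_1\wr\mathbb Z$. Hence every group in $\mathcal S$ admits a closed embedding into $F$.

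To conclude undistortion, I would observe that every group in $\mathcal S$ is finitely generated: $\mathbb Z$ is, a finite direct sum of finitely generated groups is, and if $G$ is finitely generated then so is $G\wr\mathbb Z$ (generated by a finite generating set of $G$ together with a generator of $\mathbb Z$). So Theorem \ref{undis} applies to the closed embedding constructed above and gives that it is undistorted in $F$. There is no real obstacle: the substantive work is packaged in Lemma \ref{wreath closed} (which provides closedness of the direct-product and wreath-product constructions) and in Theorem \ref{undis} (which upgrades closed-and-finitely-generated to undistorted); the corollary is just the induction that assembles them. \qed
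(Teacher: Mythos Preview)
Your proposal is correct and follows essentially the same approach as the paper: the paper's proof is the one-line remark ``By Lemma~\ref{wreath closed} and the fact that $F$ has a closed cyclic subgroup,'' which is precisely the induction you spell out, with Theorem~\ref{undis} supplying the undistortion. Your additional observation that every group in $\mathcal S$ is finitely generated (needed to invoke Theorem~\ref{undis}) is a helpful detail the paper leaves implicit.
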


For an example of an embedding of $\mathbb{Z}\wr\mathbb{Z}$ into $F$ as a closed subgroup (and the core rewriting system of the associated subgroup), see \cite[Example 11.27]{G}.

Note that all groups in the class $\mathcal S$
  are solvable. Recall that by \cite[Theorem 11.1]{G}, the closure of any finitely generated solvable subgroup of $F$ is finitely generated and solvable (of the same derived length). In fact, we have the following.

\begin{lemma}
	Let $H$ be a finitely generated solvable subgroup of $F$. Then the closure of $H$ in $F$ belongs to the class $\mathcal S$. 
\end{lemma}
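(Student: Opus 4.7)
The plan is to induct on the derived length $d$ of $H$. For $d=0$ the closure is trivial, and for $d=1$, Theorem 11.1 of \cite{G} gives that $Cl(H)$ is finitely generated and abelian; since $F$ is torsion-free this forces $Cl(H) \cong \mathbb{Z}^n$ for some $n \geq 0$, which lies in $\mathcal{S}$ as an iterated direct sum of copies of $\mathbb{Z}$. Fix $d \geq 2$ and set $G = Cl(H)$; by Theorem 11.1 of \cite{G}, $G$ is finitely generated and solvable of derived length exactly $d$.

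First I decompose along orbitals. The complement of the common fixed-point set of $G$ in $(0,1)$ is a finite disjoint union of maximal open intervals $(a_1,b_1),\dots,(a_k,b_k)$, and the dyadicity of the endpoints $a_i,b_i$ follows from the closure of $G$ combined with Lemma \ref{components} (a non-dyadic boundary point of an orbital would, by cutting a carefully chosen element at nearby dyadic fixed points, produce a non-trivial component violating the orbital structure). If $k\geq 2$, then cleaving each element of $G$ at every $a_i$ into its components (all of which lie in $G$ by closure) yields $G\cong G_1\times\cdots\times G_k$, where $G_i\leq G$ is the subgroup supported on $[a_i,b_i]$; each $G_i$ is closed in the identified copy $F_{[a_i,b_i]}\cong F$, finitely generated, solvable of derived length at most $d$, and has a single orbital. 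Since $\mathcal{S}$ is closed under direct sums, it suffices to treat the single-orbital case.

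Suppose $k=1$, so $G$ has the unique orbital $(a,b)$ with dyadic endpoints, and is non-abelian. Pick $f\in G$ whose orbital is all of $(a,b)$; replacing $f$ by $f\iv$ if needed, assume $f$ is a push-up. The structural step is to produce a dyadic $c\in(a,b)$ and a subgroup $K\leq G$ supported in the fundamental domain $[c,f(c))$ of $f$, such that $G=\langle K,f\rangle$. Once this is in hand, Lemma \ref{wreath}(2) gives $G\cong K\wr\mathbb{Z}$; under the natural identification of the $F$-functions supported on $[c,f(c))$ with $F$, the group $K$ is closed, finitely generated, and solvable of derived length $d-1$ (since $K\wr\mathbb{Z}$ has derived length one more than $K$ when $K\neq 1$), so by induction $K\in\mathcal{S}$ and hence $G\in\mathcal{S}$.

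The main obstacle is establishing the single-orbital wreath-product decomposition itself. The essential ingredients are a Brin--Bleak-style observation that commutators of push-up elements of $F$ sharing a common orbital fix open neighborhoods of both endpoints (near each endpoint the push-ups are linear with common fixed point, so commutators vanish there), which forces $G'=[G,G]$ to be supported in a closed subinterval $[a',b']\subset(a,b)$; together with the closure of $G$, which allows one to cut any element of $G$ at dyadic fixed points and redistribute its pieces across the translates $f^n([c,f(c)))$ of a single fundamental-domain piece. These combine to exhibit $G$ as the wreath product $K\wr\mathbb{Z}$ with $K$ a closed finitely generated solvable subgroup of strictly smaller derived length, closing the induction.
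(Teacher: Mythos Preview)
Your approach differs from the paper's. Rather than inducting on derived length, the paper cites the Bleak--Brough--Hermiller algorithm \cite{BBH}, which already shows that the split group $S(H)$ (all piecewise-$H$ homeomorphisms, with pieces cut at arbitrary points) lies in $\mathcal S$ whenever $H\le PL_o(I)$ is finitely generated and solvable; it then remarks that the same argument, run with \emph{dyadic-orbitals} in place of orbitals, yields the result for $Cl(H)$. Here a dyadic-orbital of $f$ is an interval $(a,b)$ with dyadic endpoints fixed by $f$ and containing no interior dyadic fixed point of $f$.

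Your argument has a genuine gap at the orbital decomposition. The assertion that the group-orbital endpoints $a_i,b_i$ of $G=Cl(H)$ are dyadic is false in general, and the sketched justification via Lemma~\ref{components} does not work: cutting an element of $G$ at a dyadic fixed point near a non-dyadic orbital boundary simply returns components that still have that same non-dyadic boundary, with no contradiction in sight. For a concrete obstruction, take $f\in F$ with exactly two orbitals meeting at a non-dyadic rational $\alpha$, and adjoin an element $h$ supported in a dyadic interval contained in a fundamental domain of $f$ inside one of the orbitals, so that $H=\langle f,h\rangle$ is metabelian; every element of $H$ fixes $\alpha$, and since piecewise-$H$ functions are glued only at dyadic points, so does every element of $Cl(H)$, leaving $\alpha$ as a non-dyadic group-orbital boundary of the derived-length-two closed group $Cl(H)$. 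With non-dyadic $a_i$ you cannot form the copy $F_{[a_i,b_i]}$, cannot realise $G_i$ as a closed subgroup of $F$, and the induction does not get off the ground. This is precisely why the paper passes to dyadic-orbitals. Your single-orbital wreath decomposition is essentially the inductive core of \cite{BBH} and, carried out in the dyadic-orbital framework, can be made to work; as written it inherits the same defect.
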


\begin{proof}
Recall that Bleak defined the \emph{split group} $S(H)$ of a subgroup $H$ of $PL_o(I)$, as the subgroup of all piecewise-$H$ functions. In particular, if $H$ is a subgroup of $F$, then $S(H)$ is not necessarily a subgroup of $F$ but for every $H\leq F$ we have $H\leq Cl(H)\leq S(H)$.
In \cite{BBH}, Bleak Brough and Hermiller give an algorithm for determining the solvability of computable subgroups of $PL_o(I)$. It follows from the algorithm that if $H$ is a finitely generated solvable subgroup of $F$ then the split group $S(H)$ belongs to the class $\mathcal S$. Indeed, they show in the proof that $S(H)$ is generated by a set of one-orbital functions which satisfies certain conditions (see \cite[Page 4]{BBH}). They show that these conditions guarantee (by an inductive application of Lemma \ref{wreath} above) that the split group $S(H)$ belongs to the class $\mathcal S$.
The proof can be modified to show that if $H$ is finitely generated and solvable then the closure $Cl(H)$ belongs to the class $\mathcal S$.
Indeed, let $f\in F$. We say that an interval $(a,b)$ is a \emph{dyadic-orbital} of $f$ if $a$ and $b$ are dyadic, $f$ fixes $a$ and $b$ and $f$ does not fix any dyadic fraction in $(a,b)$.
The proof from \cite{BBH} can be modified to show that if $H$ is finitely generated and solvable then $Cl(H)$ is generated by a finite set $Y$ such that each function in $Y$ has a unique dyadic-orbital and such that using Lemma \ref{wreath}, one can get that the subgroup generated by $Y$ belongs to the class $\mathcal S$. Hence, $Cl(H)$ belongs to $\mathcal S$.
%
%
\end{proof}

\begin{rk}
	Note that $\mathbb{Z}\wr\mathbb{Z}$ has subgroups with distortion functions of any polynomial degree \cite{DO}.
	It follows that Thompson's group $F$ also has distorted copies of $\mathbb{Z}\wr\mathbb{Z}$. By Theorem \ref{undis}, these subgroups are necessarily not closed. Note also that their closure is finitely generated and closed, and hence undistorted in $F$. It follows that these subgroups are distorted subgroups of their closure.	
\end{rk}

\paragraph{The Brin-Navas group.}

Recall that the Brin-Navas group $B$ is an elementary amenable group of EA class $\omega+2$ (see \cite[Section 5]{Brin} and \cite[Example 6.3]{N}). The group $B$ is an HNN-extension of a bi-infinite iterated wreath product of $\mathbb{Z}$.
It is proved in \cite[Theorem 9.1]{G} that there is a closed subgroup of $F$ isomorphic to $B$. In fact, it is proved that $F$ has a closed subgroup $H$ isomorphic to $B$ which is  maximal inside a normal subgroup $K$ of $F$ (such that $F/K$ is cyclic). By Theorem \ref{undis}, this copy of the Brin-Navas group inside $F$ is undistorted in $F$. (The core rewriting system  of this copy of $B$ inside $F$ appears in the proof of \cite[Theorem 9.1]{G}).



\paragraph{Maximal subgroups of $F$.}

As noted in the introduction, all maximal subgroups of $F$ of infinite index are closed \cite{G21}. Hence, Theorem \ref{undis} implies that every finitely generated maximal subgroup of $F$ is undistorted in $F$ (note that finite index subgroups of $F$ are necessarily undistorted in $F$).




Note also that if $H$ is a finitely generated proper subgroup of $F$ then $H$ is contained in a maximal subgroup whose core is finite (see \cite{G21}). Hence, Corollary \ref{finitely generated} implies the following.

\begin{cy}
	Let $H$ be a finitely generated proper subgroup of $F$. Then $H$ is contained in a finitely generated maximal subgroup of $F$.
\end{cy}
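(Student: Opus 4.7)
The plan is to combine two ingredients that are already in place: the citation from \cite{G21} (stated in the paragraph preceding the corollary) that every finitely generated proper subgroup $H$ of $F$ is contained in a maximal subgroup $M$ of $F$ whose Stallings $2$-core $\mathcal C(M)$ is finite, and the theorem proved in Section \ref{sec:fin} saying that any closed subgroup of $F$ with finite core is finitely generated.

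First I would fix a finitely generated proper subgroup $H \le F$ and invoke \cite{G21} to produce a maximal subgroup $M$ of $F$ with $H \le M$ and $\mathcal C(M)$ finite. Next I would split into two cases according to the index of $M$ in $F$. If $[F:M] < \infty$, then $M$ is finitely generated by the standard fact that finite index subgroups of a finitely generated group are finitely generated (and $F$ is $2$-generated by \cite{CFP}).

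If instead $[F:M] = \infty$, then $M$ is a maximal subgroup of $F$ of infinite index, and by the result of \cite{G21} cited in the introduction, every such subgroup is closed. Hence $M$ is a closed subgroup of $F$ with finite core, so Corollary \ref{finitely generated} (together with the theorem following Lemma \ref{retract}) applies to $M$ and yields that $M\cong DG(\mathcal P_{\mathcal C(M)},\rho_{\mathcal C(M)})$ is finitely generated, with a generating set whose size is bounded in terms of the number of inner vertices, inner edges, and cells of $\mathcal C(M)$. In either case $M$ is a finitely generated maximal subgroup of $F$ containing $H$, which is what we want.

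There is essentially no obstacle here: the proof is simply an assembly of the cited result from \cite{G21} and Corollary \ref{finitely generated}. The only (minor) point to check is that the finite-index case is handled separately, since Corollary \ref{finitely generated} passes through the hypothesis that $M$ be closed, which is only guaranteed by \cite{G21} in the infinite index case; but in the finite index case, finite generation of $M$ is immediate and does not require the closedness machinery.
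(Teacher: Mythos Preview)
Your proposal is correct and follows essentially the same approach as the paper: invoke \cite{G21} to obtain a maximal subgroup $M\supseteq H$ with finite core, then apply Corollary \ref{finitely generated} (together with the unnumbered theorem after Lemma \ref{retract}) to conclude that $M$ is finitely generated. The paper's argument is a one-line ``Hence, Corollary \ref{finitely generated} implies the following'' and does not explicitly separate the finite-index case, but your case distinction is a harmless (and arguably cleaner) elaboration of the same idea.
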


\paragraph{Jones' subgroups of Thompson's group $F$.}

Vaughan Jones \cite{Jones} defined a family of unitary representations of Thompson's group $F$ using planar algebras. These representations give rise to interesting subgroups of $F$ (the stabilizers of the vacuum vector in these representations). Jones' subgroup $\overrightarrow{F}$, defined in \cite{Jones} is particularly interesting. Indeed, Jones proved that elements of $F$ encode in a natural way all knots and links and elements of $\overrightarrow{F}$ encode all oriented links and knots. In \cite{GS-J,GS} we proved that Jones' subgroup $\overrightarrow{F}$ is isomorphic to
the ``brother group'' $F_3$  of $F$ (i.e., the group of all piecewise linear homeomorphisms of the interval $[0,1]$, where all slopes are integer powers of $3$ and break points of the derivative are  3-adic fractions \cite{Br}).  We also showed that $\overrightarrow F$ is the stabilizer of the set $S$ of all dyadic fractions such that the sum of digits in their finite binary representation is odd.
Hence, Jones' subgroup $\overrightarrow{F}$ is a closed subgroup of $F$ and Theorem \ref{undis} implies that it is undistorted in $F$. (The interested reader can find the core rewriting system of $\overrightarrow{F}$ in \cite[Example 6.10]{G}.) In \cite{GS-J} we have also studies a family of subgroups, which we called Jones' subgroups $\overrightarrow F_n$, which can be defined in an analogous way to $\overrightarrow F$ (where $\overrightarrow F_2=\overrightarrow F$). We showed that like $\overrightarrow{F}$, for each $n$, Jones' subgroup $\overrightarrow{F}_n$ is isomorphic to the brother group $F_{n+1}$ of $F$. We also showed that for each $n$, $\overrightarrow{F}_n$ is the intersection of stabilizers of certain sets of dyadic fractions (see \cite{GS-J}). Hence they are all closed subgroups of $F$ and as such undistorted in $F$. Note that in \cite{G21}, the first author proves that for every prime number $p$, Thompson's group $F$ has a maximal subgroup isomorphic to $\overrightarrow F_p$.

Another subgroup defined by Jones, is the $3$-colorable subgroup $\mathcal F$. It was studied in \cite{Ren,AN}. It follows from \cite{AN}, that this subgroup is closed, since it can be described as the stabilizer of some set of numbers in $(0,1)$. Hence, by Theorem \ref{undisInt}, it is also undistorted in ${F}$.

\section{Open Problems}\label{open}

Using methods from \cite[Section 10]{G}, it is possible to prove that the closed subgroup of $F$ with undecidable conjugacy problem constructed in Section \ref{sec:con} is not finitely generated if the group $G$ with undecidable word problem used in the construction is torsion-free.

\begin{prob}\label{prob:conj}
	Does Thompson's group $F$ have a closed finitely generated subgroup with undecdiable conjugacy problem?
\end{prob}

Recall that in Section \ref{sec:fin} we prove that the core rewriting system of any finitely generated subgroup of $F$ has a finite semi-completion. By \cite{GuSa}, if $\mathcal P$ is a string rewriting system which has a finite completion, then the conjugacy problem in any diagram group over $\mathcal P$ is decidable. Hence, if the core rewriting system of any finitely generated subgroup of $F$ has a finite completion, then the answer to Problem \ref{prob:conj} is negative.

\begin{prob}\label{prob:comp}
	Let $H$ be a finitely generated closed subgroup of $F$ and let $\mathcal P=\la E\mid R\ra$ be the core rewriting system of $H$. Does $\mathcal P$ have a finite completion?
\end{prob}


We note also that finitely generated closed subgroups of $F$ are not necessarily finitely presented. Indeed, as noted above, Thompson's group F has a closed subgroup isomorphic to $\mathbb{Z}\wr\mathbb{Z}$ . The following problem remains open.

\begin{prob}
	Is every finitely generated maximal subgroup of Thompson's group $F$ finitely presented?
\end{prob}

Recall that Savchuk proved \cite{Sav,Sav1} that for any $\alpha\in (0,1)$ the stabilizer $S_\alpha$ of $\alpha$ in $F$ is a maximal subgroup of $F$. By \cite{GS2}, this maximal subgroup is finitely generated if and only if $\alpha$ is rational. As noted above, for every prime number $p$, Thompson's group $F$ has a maximal subgroup isomorphic to $\overrightarrow{F}_p\cong F_{p+1}$. Note that there are only six other explicit maximal subgroups of $F$ of infinite index in the literature: three maximal subgroups of $F$ were constructed in \cite[Section 10]{G} and three explicit maximal subgroups of $F$ (which contain Jones' $3$-colorable subgroup $\mathcal F$) were recently constructed in \cite{AN}.
It is possible to prove that all of these maximal subgroups of $F$ are finitely presented. Indeed, this is obvious for the maximal subgroups of $F$ isomorphic to $F_{p+1}$ (recall that by \cite{GS}, all ``brother groups'' $F_n$ of Thompson's group $F$ are finitely presented). For any other known maximal subgroup of $F$ of infinite index, one can find the core rewriting system of the subgroup and show that it has a finite completion. Then using \cite[Lemma 9.11]{GuSa}, one can show that the subgroup is finitely presented (this is also true for many maximal subgroups of $F$ which do not appear in the literature and can be constructed in a similar way to the maximal subgroups of $F$ constructed in \cite[Section 10]{G}).

Another interesting problem for closed subgroups of $F$ is the following.

\begin{prob}
	Let $H$ be a closed subgroup of $F$ and let $\mathcal P=\la E\mid R\ra$ be the core rewriting system of $H$.
	Is it true that $H$ contains a copy of Thompson's group $F$ if and only if the semigroup defined by the rewriting system $\mathcal P$ contains an idempotent?
\end{prob}

We note that if the semigroup $S$ defined by the rewriting system $\mathcal P$ contains an idempotent, then $H$ contains a copy of $F$, by \cite[Theorem 25]{GubSa1}. The other direction is open. Note also that if the answer to Problem \ref{prob:comp} is positive, then it is likely that for every finite core rewriting system it is decidable whether the semigroup it defines contains an idempotent.


\begin{minipage}{3 in}
	Gili Golan\\
Department of Mathematics,\\
 Ben Gurion University of the Negev,\\ 
golangi@bgu.ac.il
\end{minipage}
\begin{minipage}{3 in}
	Mark Sapir\\
	Department of Mathematics,\\
	Vanderbilt University,\\
	m.sapir@vanderbilt.edu
\end{minipage}

\end{document}